\setlist[enumerate]{labelindent=\parindent,topsep=0.4ex,itemsep=0.1ex}
\setlist[itemize]{labelindent=\parindent,topsep=0.4ex,itemsep=0.1ex}
\setlist[enumerate,1]{labelindent=\parindent, leftmargin=*,label=\textup{(\arabic*)},ref=\text{\arabic*}}
\setlist[enumerate,2]{labelindent=\parindent, leftmargin=*,label=\textup{(\alph*)},ref=\text{\alph*}}
\numberwithin{equation}{section}
\theoremstyle{plain}
\newtheorem{theorem}[equation]{Theorem}
\newtheorem{lemma}[equation]{Lemma}
\newtheorem*{theorem*}{Theorem}
\newtheorem*{claim}{Claim}
\newtheorem{corollary}[equation]{Corollary}
\theoremstyle{definition}
\newtheorem{construction}[equation]{Construction}
\newtheorem{situation}[equation]{}
\newtheorem{definition}[equation]{Definition}
\newtheorem*{definition*}{Definition}
\newtheorem*{questionA}{Question A}
\newtheorem*{questionB}{Question B}
\newtheorem{example}[equation]{Example}
\newtheorem{notation}[equation]{Notation}
\newtheorem{remark}[equation]{Remark}
\newtheorem*{remark*}{Remarks}
\renewcommand{\mathbb}{\mathbf}
\renewcommand{\setminus}{\mathbin{\rule[0.2em]{0.67em}{0.12em}}}%
\title[Visibility and divisibility]%
{Revisiting Dwork cohomology: Visibility and divisibility of Frobenius eigenvalues in rigid cohomology}
\author{Daqing Wan}
\address{Department of Mathematics, University of California, Irvine, CA 92697-3875 USA.}
\email{dwan@math.uci.edu}
\author{Dingxin Zhang}
\address{C629 Shuangqing Complex Building A, Tsinghua University, Beijing~100086 China.}
\email{dingxin@tsinghua.edu.cn}
\begin{document}
\maketitle

\begin{abstract}
In this paper, we investigate Frobenius eigenvalues of the compactly
supported rigid cohomology of a variety defined over a finite field
with $q$ elements, using Dwork's method.  Our study yields several
arithmetic consequences.  Firstly, we establish that the zeta
functions of a set of related affine varieties can reveal all
Frobenius eigenvalues of the rigid cohomology of the variety up to a
Tate twist.  This result does not seem to be known for $\ell$-adic
cohomology.  As a second application, we provide several
$q$-divisibility lower bounds for the Frobenius eigenvalues of the
rigid cohomology of the variety, in terms of the dimension and
multi-degrees of the defining equations.  These divisibility bounds
for rigid cohomology are generally better than what is suggested from
the best known divisibility bounds in $\ell$-adic cohomology, both
before and after the middle cohomological degree.
\end{abstract}

\setcounter{tocdepth}{1}
\tableofcontents

\section{Introduction}
\label{sec:intro}

Dwork \cite{dwork:zeta-function-hypersurface-1} engineered a
cohomology theory in order to study the zeta function of a projective
hypersurface over a finite field $\mathbb{F}_{q}$ of $q$ elements with
characteristic $p$.  In this paper, we revisit his construction, and
study some problems on Frobenius eigenvalues of affine varieties in
rigid cohomology.

\subsection{Visibility of Frobenius eigenvalues}
Our first theorem is about the \emph{visibility} of Frobenius
eigenvalues in the zeta function of an affine variety.

Let \(\mathbb{F}_{q}\) be a finite field with \(q\) elements and
characteristic \(p\).  Given an algebraic variety \(Z\) over
\(\mathbb{F}_{q}\), its \emph{zeta function} is defined as
\begin{equation*}
  \zeta_Z(t) = \exp\left\{ \sum_{m=1}^{\infty}  \frac{|Z(\mathbb{F}_{q^m})|}{m}t^{m}\right\}.
\end{equation*}
Weil conjectured, Dwork subsequently
proved~\cite{dwork:rationality-zeta-function}, that \(\zeta_Z(t)\) is
a rational function.

By trace formulae in rigid cohomology
(due to Étesse and Le~Stum~\cite{etesse-le-stum:l-functions-associated-to-overconvergent-f-isocrystals-1})
and \(\ell\)-adic cohomology (see, e.g., \cite[Rapport, \S\S4--6]{deligne:sga4.5}),
the zeta function is an alternating product
\begin{equation}\label{eq:grothendieck-trace-formula}
  \zeta_Z(t) = \prod_{i=0}^{2\dim Z} \det\left(1 - t\cdot \mathrm{Frob}_q | \mathrm{H}^{i}_{c}(Z)\right)^{(-1)^{i+1}}.
\end{equation}
Here, \(\mathrm{H}^{i}_c(Z)\) could mean either Berthelot's compactly supported
rigid cohomology \(\mathrm{H}^i_{\mathrm{rig},c}(Z)\),
or compactly supported \(\ell\)-adic cohomology \(\mathrm{H}^{i}_{c}(Z_{\overline{\mathbb{F}}_q},\mathbb{Q}_{\ell})\)
(\(\overline{\mathbb{F}}_{q}\) is a fixed algebraic closure of
\(\mathbb{F}_q\),
\(Z_{\overline{\mathbb{F}}_q}=Z\otimes_{\mathbb{F}_q}\overline{\mathbb{F}}_q\),
\(\ell\neq p\) is a prime number).  Note that the finite
dimensionality of \(\mathrm{H}^{i}_c(Z)\) also yields a cohomological
proof for the rationality of the zeta function.

By the trace formulae, reciprocal roots and poles of \(\zeta_{Z}(t)\) constitute
a subset of the Frobenius eigenvalues of
\(\mathrm{H}^{\ast}_{c}(Z)\).  When \(Z\) is
\emph{smooth} and \emph{proper} over \(\mathbb{F}_{q}\), the converse
is also true, as a result of Deligne's resolution of Weil's
conjecture~\cite[Théorème~1.6]{deligne:weil-1} (for rigid cohomology,
see Katz--Messing~\cite{katz-messing:consequence-riemann-hypothesis}).
In such cases, the Frobenius eigenvalues of \(\mathrm{H}^{i}_{c}(Z)\)
are algebraic integers having archimedean absolute value \(q^{i/2}\) (with
respect to any abstract embedding
\(W(\mathbb{F}_{q})[1/p]\hookrightarrow \mathbb{C}\) or
\(\mathbb{Q}_{\ell}\hookrightarrow \mathbb{C}\)).  Therefore the
denominator and numerator of the right hand side of
\eqref{eq:grothendieck-trace-formula} do not have common factors; the
zeta function alone can recover the Frobenius eigenvalues.

Without the smooth proper condition, the linear factors of the
determinants in \eqref{eq:grothendieck-trace-formula} could cancel
out.  If a cancellation happens, \(\zeta_Z(t)\) may not be capable of
witnessing all the Frobenius eigenvalues, not even up to Tate twist.
Here is a simple example.  Let \(X\) be a general nonsingular cubic
curve in \(\mathbb{A}^{2}_{\mathbb{F}_q}\), and
\(Y = \mathbb{A}^2_{\mathbb{F}_q} \setminus X\) its complement.  Then
the zeta function of the affine variety \(Z=X \sqcup Y\) equals that
of \(\mathbb{A}^2_{\mathbb{F}_q}\), namely \((1-q^{2}t)^{-1}\). On the
other hand, there exist Frobenius eigenvalues of
\(\mathrm{H}^1_{c}(Z) = \mathrm{H}^1_{c}(X) \oplus \mathrm{H}^1_{c}(Y)\) of
absolute value \(q^{1/2}\).

Our first theorem asserts that, if we are willing to take the
\emph{defining equations} of an affine variety \(Z\) into the consideration,
then we can
\emph{recover all the Frobenius eigenvalues of \(Z\) up to Tate twist}
from zeta functions of finitely many varieties related to \(Z\).
In order to give the precise statement,
let us introduce some terminologies.

\begin{definition}
Let \(\Gamma=\{\Gamma_{a}(t), \Gamma_b(t), \ldots\} \subset 1 + t\mathbb{C}_p[\![t]\!]\)
be a collection of \(p\)-adic meromorphic function on \(\mathbb{C}_{p}\)
(i.e., fractions of \(p\)-adic entire functions).
We say a \(p\)-adic number \(\gamma \in \mathbb{C}_p\setminus\{0\}\) is \emph{visible}
in \(\Gamma\), if \(\Gamma_a(\gamma^{-1})=0\) or \(\infty\) for some \(\Gamma_a \in \Gamma\).
We say \(\gamma\) is \emph{weakly visible} in \(\Gamma\), if there exists
\(m \in \mathbb{Z}\) such that \(q^{m}\gamma\) is visible in \(\Gamma\).
\end{definition}

Now let
\(f_{1},\ldots,f_{r} \in \mathbb{F}_{q}[x_1,\ldots,x_n]\) be a collection
of polynomials. For every subset \(I \subset \{1,2,\ldots,r\}\),
set \(Z_{I}= \operatorname{Spec}\mathbb{F}_q[x_1,\ldots,x_n]/(f_i:i\in I) \subset \mathbb{A}^{n}_{\mathbb{F}_q}\),
and \(Z_{I}^{\ast} = Z_{I} \cap \mathbb{G}_{\mathrm{m}}^{n}\).
Write \(Z=Z_{\{1,2,\ldots,r\}}\).

\begin{theorem}%
\label{theorem:cancellation}
Let \(Z\) be the vanishing locus of
\(f_{1},\ldots,f_{r} \in \mathbb{F}_{q}[x_{1},\ldots,x_{n}]\) in
\(\mathbb{A}^{n}\).  Then any Frobenius eigenvalue of
\(\mathrm{H}^{\bullet}_{\mathrm{rig},c}(Z)\) is weakly visible in the
finite set
\(\{\zeta_{Z_{I}^{\ast}}(t): {I\subset\{1,2,\ldots,r\}}\}\).
\end{theorem}

The theorem is already interesting when
\(Z\subset\mathbb{A}^n_{\mathbb{F}_q}\) is an affine hypersurface.  In
this special situation, it asserts that if a Frobenius eigenvalue
\(\lambda\) is canceled out in the zeta function, then either it
equals \(q^m\) for some \(m\), or there must exist a reciprocal root
or reciprocal pole of \(\zeta_{Z\cap \mathbb{G}_{\mathrm{m}}^n}(t)\)
that equals a Tate twist of \(\lambda\).  Thus the zeta function
\(\zeta_{Z\cap\mathbb{G}_{\mathrm{m}}^n}\) alone can recover all the
Frobenius eigenvalues of \(\mathrm{H}^{\bullet}_{\mathrm{rig},c}(Z)\)
up to Tate twist.

In view of the ``motivic'' philosophy, the same result should
also hold for \(\ell\)-adic cohomology. But our method---\(p\)-adic in
nature---depends upon an explicit chain model of rigid cohomology, and
does not work in the \(\ell\)-adic context.

\subsection{Divisibility of Frobenius eigenvalues}
The second result of this paper concerns the notion of
$q$-\emph{divisibility} as algebraic integers of Frobenius eigenvalues
of affine and projective varieties.

Let \(f_1,\ldots,f_r \in \mathbb{F}_{q}[x_1,\ldots,x_n]\) be a collection of
polynomials. Write \(d_j = \deg f_j\). Without loss of generality, we will assume that all the degrees $d_j$ are
positive. By rearranging we could and will assume
\(d_1\geq d_2\geq \cdots \geq d_r\).
Let \[Z = \operatorname{Spec}\mathbb{F}_q[x_1,\ldots,x_n]/(f_1,\ldots,f_r)\]
be the vanishing scheme of these polynomials in \(\mathbb{A}^{n}_{\mathbb{F}_q}\). For any integer $j\geq 0$,
we define a  non-negative integer
\begin{equation*}
  \mu_j(n;d_1,\cdots, d_r) = j + \max\left\{0, \left\lceil \frac{n-j-\sum_{i=1}^{r}d_i}{d_1}\right\rceil\right\}.
\end{equation*}

Recall that the classical Ax--Katz
theorem~\cite{ax:zeroes-of-polynomials-over-finite-fields,katz:on-a-theorem-of-ax} states that all the
reciprocal roots and poles of the zeta function of \(Z\) are divisible by
\(q^{\mu_0(n;d_1,\ldots,d_r)}\) as algebraic integers. This divisibility was
later upgraded to a divisibility on Frobenius eigenvalues on \(\ell\)-adic
cohomology: Esnault and Katz \cite{esnault-katz:cohomological-divisibility}
showed that the Frobenius eigenvalues of
\(\mathrm{H}^{\bullet}_{c}(Z_{\overline{\mathbb{F}}_q},\mathbb{Q}_{\ell})\)
are divisible by \(q^{\mu_{0}(n;d_1,\ldots,d_r)}\), furthermore,  for $j\geq 0$,
the Frobenius eigenvalues of \(\mathrm{H}^{n-1+j}_{c}(Z_{\overline{\mathbb{F}}_q},\mathbb{Q}_{\ell})\)
are divisible by \(q^{\mu_{j}(n;d_1,\ldots,d_r)}\).

The theorem of Esnault and Katz does not give the most optimal bound when \(r>1\).
Recently, Esnault and the first author~\cite{esnault-wan:divisibility} revisited
this theme. Based on their study for projective varieties, they suggested
a divisibility bound beyond the middle cohomological degree better than
the Esnault--Katz bound.

\begin{questionA}
Is it true that any Frobenius eigenvalue of
\(\mathrm{H}^{\dim Z+j}_{c}(Z_{\overline{\mathbb{F}}_q},\mathbb{Q}_{\ell})\)
is divisible by \(q^{\mu_j(n;d_1,\ldots,d_r)}\), in the ring of algebraic integers, for all
integers \(j\) satisfying $0\leq j\leq \dim Z$?
\end{questionA}

This question, if answered affirmatively, would simultaneously improve the results of
\cite{esnault-katz:cohomological-divisibility} and Deligne's integrality
theorem \cite[Exposé XXI,~\S5]{sga7-2} beyond the middle cohomological degree.

Since Frobenius eigenvalues are supposed to be ``motivic'', one is led to ask
the same question for the Frobenius eigenvalues of rigid cohomology.

\begin{questionB}
Is it true that any Frobenius eigenvalue of
\(\mathrm{H}^{\dim Z+j}_{\mathrm{rig},c}(Z)\)
is divisible by \(q^{\mu_j(n;d_1,\ldots,d_r)}\), in the ring of algebraic integers, for all
integers \(j\) satisfying $0\leq j\leq \dim Z$?
\end{questionB}

In this article, we show that:

\begin{theorem*}
  Question B has an affirmative answer.
\end{theorem*}

Unlike in the \(\ell\)-adic situation, where theorems are usually proved via dévissage,
we shall pursue the divisibility for rigid cohomology using a different
method, via Dwork's $p$-adic theory, refining and upgrading the chain level
approach in \cite{wan:poles} to Dwork cohomology, and then by comparison with
rigid cohomology.

Somewhat surprisingly, the bounds we obtain through the
\(p\)-adic methods are sharper than anticipated by
Question~B. Also, our approach gives divisibility bounds
before middle cohomological degree, improving the Ax--Katz type
bounds of~\cite{esnault-katz:cohomological-divisibility}.

To state our bounds beyond middle cohomological degree, define
\begin{equation*}
  d_i^* =
  \begin{cases}
    d_i, & \text{ if }1\leq i \leq n-\dim Z; \\
    1, & \text{ if }i>n - \dim Z, \text{ and }d_i = d_1; \\
    0,& \text{ if }i> n-\dim Z, \text{ and }d_i < d_1.
  \end{cases}
\end{equation*}
For integer $j\geq 0$, define another non-negative integer
\begin{equation*}
  \nu_{j}(n;d_1,\ldots,d_r) = j + \max\left\{ 0, \left\lceil \frac{n-j-\sum_{i=1}^{r}d_{i}^{*}}{d_1}\right\rceil \right\}.
\end{equation*}
Note that \(d_{i} \geq d_{i}^{*}\), thus
\(\nu_j(n;d_1,\ldots,d_r) \geq \mu_j(n;d_1,\ldots,d_r)\);
also the numbers \(\nu_j(n;d_1,\ldots,d_r)\) form an increasing sequence in $j$.
These numbers depend on the degrees of the defining equations of \(Z\), and also on the dimension of \(Z\).

If \(Z\) is a complete intersection by \(f_{1},\ldots,f_{r}\),
namely, if $n-\dim Z=r$,
one checks that $d_i^*=d_i$ and \(\nu_j(n;d_1,\ldots,d_r) = \mu_j(n;d_1,\ldots,d_r)\).

\begin{theorem}[Divisibility beyond middle cohomological degree]
  \label{theorem:beyond-middle-dimension}
  Let notation be as above.
  For every \(0\leq j\leq \dim Z\),
  \begin{itemize}
  \item   the Frobenius eigenvalues of
    \(\mathrm{H}^{\dim Z+j}_{\mathrm{rig},c}(Z)\)  are divisible by
    \(q^{\nu_{j}(n;d_1,\ldots,d_r)}\) in the ring of algebraic integers;
  \item   the Frobenius eigenvalues of
    \(\mathrm{H}^{\dim Z+1+j}_{\mathrm{rig},c}(\mathbb{A}^n_{\mathbb{F}_q}\setminus Z)\)
    are divisible by
    \(q^{\nu_{j}(n;d_1,\ldots,d_r)}\) in the ring of algebraic integers.
  \end{itemize}
\end{theorem}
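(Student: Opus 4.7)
The plan is first to reduce the second bullet to the first via the excision long exact sequence
\[
  \cdots \to \mathrm{H}^{i}_{\mathrm{rig},c}(\mathbb{A}^n_{\mathbb{F}_q}\setminus Z) \to \mathrm{H}^{i}_{\mathrm{rig},c}(\mathbb{A}^n_{\mathbb{F}_q}) \to \mathrm{H}^{i}_{\mathrm{rig},c}(Z) \to \mathrm{H}^{i+1}_{\mathrm{rig},c}(\mathbb{A}^n_{\mathbb{F}_q}\setminus Z) \to \cdots,
\]
combined with the vanishing of $\mathrm{H}^{i}_{\mathrm{rig},c}(\mathbb{A}^n_{\mathbb{F}_q})$ for $i\neq 2n$. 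This yields Frobenius-equivariant isomorphisms $\mathrm{H}^{\dim Z + j}_{\mathrm{rig},c}(Z) \cong \mathrm{H}^{\dim Z + 1 + j}_{\mathrm{rig},c}(\mathbb{A}^n_{\mathbb{F}_q}\setminus Z)$ throughout the range $0\leq j\leq \dim Z$ whenever $Z\subsetneq \mathbb{A}^n_{\mathbb{F}_q}$, which is the only nontrivial case. Hence it suffices to treat the first bullet.

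For that, I would invoke the comparison theorem between rigid cohomology and overconvergent Dwork cohomology established earlier in the paper, in its compactly supported incarnation. The comparison realises $\mathrm{H}^{\bullet}_{\mathrm{rig},c}(Z)$ as the cohomology of an explicit chain complex on a torus built from a Dwork auxiliary polynomial $F$ in variables $x_1,\ldots,x_n,y_1,\ldots,y_r$ (of the rough shape $F=\sum_i y_i f_i(x)$ or a close variant), equipped with an overconvergent Frobenius lift whose matrix in the natural Laurent-monomial basis admits precise $p$-adic estimates. Refining the chain-level strategy of \cite{wan:poles}, I would filter this complex by a weight associated to the Newton polytope of $F$, isolate the graded pieces contributing to each cohomological degree, and read off the minimal Frobenius slope from the combinatorics of these pieces.

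The count that produces $\nu_j$ is essentially a weighted monomial count: a Laurent monomial of weight $w$ contributes to Frobenius a matrix column of $p$-adic valuation at least $w/d_1$, and a dimension count on the graded piece landing in cohomological degree $\dim Z + j$ yields the $\lceil(n-j-\sum d_i^*)/d_1\rceil$ term. The improvement from $\mu_j$ to $\nu_j$---namely the replacement of $d_i$ by $d_i^*$ for $i>n-\dim Z$---reflects the fact that once the number of defining polynomials exceeds the codimension of $Z$, the excess equations are partially redundant in the Newton-polytope count: those with $d_i<d_1$ drop out of the effective degree sum entirely, while those with $d_i=d_1$ contribute only $1$. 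This refinement is visible at the chain level but invisible to the purely formal arguments used in the $\ell$-adic proofs of \cites{esnault-katz:cohomological-divisibility,esnault-wan:divisibility}.

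The main obstacle will be the chain-level combinatorial analysis that pins down $\nu_j$ as the \emph{sharp} lower bound on the slopes in cohomological degree $\dim Z + j$, rather than only in the alternating product that defines the zeta function. Concretely, one must verify that the induced filtration on cohomology is strict and that its minimal-slope graded piece sits exactly at the level predicted by $\nu_j$, so that no hidden cancellation lifts the Frobenius eigenvalues into a shallower filtration step; this is where the upgrade from Wan's zeta-function divisibility to per-degree cohomological divisibility really has to be earned. A secondary technical point is checking that the Dwork--rigid comparison identifies appropriate integral lattices on both sides, so that $p$-adic divisibility at the chain level translates into divisibility in the ring of algebraic integers; this should follow from the overconvergence properties packaged into the comparison theorem.
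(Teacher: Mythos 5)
Your high-level outline is right---excision for the second bullet, the Dwork--rigid comparison to replace $\mathrm{H}^\bullet_{\mathrm{rig},c}(Z)$ with a Koszul complex on the torus in $x_1,\ldots,x_n,y_1,\ldots,y_r$ with $F=\sum y_i f_i$, and then Adolphson--Sperber-style weight estimates at the chain level---but there is a genuine gap at the crux of the argument, namely in how the replacement of $d_i$ by $d_i^*$ is actually earned. You gesture at ``redundancy in the Newton-polytope count,'' but the Newton polytope of $F$ by itself does not know the codimension $c=n-\dim Z$ and gives only the Esnault--Katz bound $\mu_j$. The improvement to $\nu_j$ in the paper comes from two ingredients you are missing. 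First, a purely algebraic \emph{recombination lemma} (Lemma~\ref{lemma:new-sequence}): after passing to a finite extension, one may replace $f_1,\ldots,f_r$ by $\mathbb{F}_q$-linear combinations $g_1,\ldots,g_r$ of no larger degree, with the same zero locus, such that $g_1,\ldots,g_c$ already cut out a set-theoretic complete intersection. Second, once that holds, one decomposes the Dwork chain complex as a double complex indexed by $(|I'|,|I''|)$ where $I'=I\cap\{1,\ldots,n\}$ and $I''=I\cap\{n+1,\ldots,n+r\}$, and uses Lemma~\ref{lemma:c-exactness-of-rows} (exactness of rows in degrees $<c$, which rests on the set-theoretic complete intersection hypothesis through Lemma~\ref{lemma:rigid-koszul}) to kill all columns with $|I''|<c$ in the $E_1$-page. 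It is the resulting constraint $|I''|\geq c$, fed into the Adolphson--Sperber estimate of Lemma~\ref{lemma:fredholm-determinant-of-alpha-on-bi} together with the ordering $d_1\geq\cdots\geq d_r$, that converts the sum $\sum_{i\in I''}(d_1-d_i-1)$ into the quantity governed by $d_i^*$. Without the recombination and the spectral-sequence vanishing, there is no way to force the best $|I''|$ indices to coincide with the first $c$ indices, and the bound degenerates to $\mu_j$.

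Your worry about strictness of a filtration is also misplaced and suggests a heavier argument than is needed: the paper never needs to control a filtration on cohomology, only to invoke Monsky's spectral theory for nuclear operators (Lemmas~\ref{lemma:nuclear-factor}, \ref{lemma:factor-again}, \ref{lemma:factor}) to the effect that $\det(1-tF\mid \mathrm{H}^{n-r+j}_{\mathrm{rig},c}(Z))$ \emph{divides} the product of Fredholm determinants $\det(1-tq^{j-r}\alpha\mid B_I)$ over the relevant $I$. Likewise, the bootstrap from slope estimates to divisibility as algebraic integers does not go through integral lattices in the comparison theorem; the paper expresses $\det(1-t\alpha\mid B_I)$ as an infinite alternating product of shifted zeta functions via the Dwork trace formula and M\"obius-type manipulations (Lemma~\ref{lemma:last-assertion}), concludes that reciprocal roots are algebraic integers closed under Galois conjugation, and then applies the elementary Lemma~\ref{lemma:last}. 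You should incorporate the recombination lemma and the $|I''|\geq c$ spectral-sequence constraint to make the $d_i^*$ bound work, and replace the filtration and lattice arguments with the Fredholm-determinant divisibility and zeta-product integrality as sketched.
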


\begin{remark}
  (a) The second item is the consequence of the first, thanks to the long
  exact sequence for compactly supported cohomology.

  (b) For any separated variety \(Z\) over \(\mathbb{F}_q\), the
  Frobenius eigenvalues of \(\mathrm{H}^{\ast}_{\mathrm{rig},c}(Z)\) are
  always algebraic integers. When \(Z\) is smooth proper, we use the Weil conjecture
  and the integrality of the zeta function, see
  \cite[Theorem~1]{katz-messing:consequence-riemann-hypothesis}. If \(Z\) is
  proper but possibly singular, we can produce a proper hypercovering using
  smooth proper varieties by alteration~\cite{de-jong:alteration}, and then
  apply cohomological
  descent~\cite{tsuzuki:cohomological-descent-rigid-cohomology-for-proper-coverings}. If \(Z\) is not
  proper, we can embed \(Z\) into a proper variety \(\overline{Z}\), and
  conclude by using the assertion for proper varieties and the long exact
  sequence
  \begin{equation*}
    \cdots \to \mathrm{H}^{i}_{\mathrm{rig},c}(Z) \to \mathrm{H}^{i}_{\mathrm{rig}}(\overline{Z}) \to \mathrm{H}^{i}_{\mathrm{rig}}(\overline{Z}\setminus Z) \to \cdots.
  \end{equation*}
  Our method is capable of seeing this too, see p.~\pageref{bootstrap}.
\end{remark}

Since \(\nu_j(n;d_1,\ldots,d_r) \geq \mu_j(n;d_1,\ldots,d_r)\),
Theorem~\ref{theorem:beyond-middle-dimension} establishes
an enhanced positive answer to Question B.
In the complete intersection case, the two bounds are identical.
If $Z$ is not a complete intersection, the divisibility bound in
Theorem~\ref{theorem:beyond-middle-dimension} can be strictly better.

What about before the middle cohomological degree?
In this range, the only known divisibility for $\ell$-adic  cohomology
is the theorem of Esnault--Katz which says that
the divisibility is by \(q^{\mu_0(n;d_1,\ldots,d_r)} \). We have an improved $p$-adic companion
in this range as well.

Since \(Z\) is cut out by \(r\) equations,
\(\mathrm{H}^{i}_{\mathrm{rig},c}(Z)\) and
\(\mathrm{H}^{i}_{c}(Z_{\overline{\mathbb{F}}_q},\mathbb{Q}_{\ell})\)
all vanish if \(i<n-r\) (see Lemma~\ref{lemma:vanishing-by-number-equations}).
So, we will assume that $i\geq n-r$.
If \(n-r=\dim Z\), i.e., \(Z\) is a complete
intersection, Theorem~\ref{theorem:beyond-middle-dimension}
already covers all the possible cohomological degrees \(i\) such that
\(\mathrm{H}^{i}_{\mathrm{rig},c}(Z)\) is nontrivial.
However, if \(Z\) is not a complete intersection,
\(\mathrm{H}^{i}_{\mathrm{rig},c}(Z)\) and
\(\mathrm{H}^{i}_{c}(Z_{\overline{\mathbb{F}}_q},\mathbb{Q}_{\ell})\)
could be nonzero for \(n-r\leq i < \dim Z\).
A novelty of our approach is that we can provide improved
divisibility information of Frobenius eigenvalues in these degrees as well, of course, only for rigid cohomology.

\begin{theorem}[Divisibility before middle cohomological degree]
  \label{theorem:before-middle-dimension}
  Let notation be as above. For every \(0\leq m \leq \dim Z - (n-r)\),
  the Frobenius eigenvalues of \(\mathrm{H}^{n-r+m}_{\mathrm{rig},c}(Z)\)
  are divisible by \(q^{\epsilon_m(n;d_1,\ldots,d_r)}\) in the ring of algebraic
  integers, where
  \begin{equation*}
    \epsilon_m(n;d_1,\ldots,d_r) =
    \max\left\{ 0, \left\lceil \frac{n-(d_1+\cdots + d_{r-m}+d_{r-m+1}^*+\cdots + d_{r}^{*})}{d_1}\right\rceil \right\}.
  \end{equation*}
  The Frobenius eigenvalues of
  \(\mathrm{H}^{n-r+1+m}_{\mathrm{rig},c}(\mathbb{A}^n_{\mathbb{F}_q}\setminus Z)\) are divisible
  by \(q^{\epsilon_m(n;d_1,\ldots,d_r)}\) in the ring of algebraic integers as well.
\end{theorem}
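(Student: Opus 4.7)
The plan is to work at the chain level of the overconvergent Dwork complex provided by the comparison theorem of Section~\ref{sec:coh-intro}, and to extract $p$-adic Frobenius-divisibility estimates refined enough to distinguish cohomological degrees—an upgrade of the Newton-polygon estimates in \cite{wan:poles} from zeta functions to individual cohomology groups. For the second bullet, the open--closed long exact sequence
\[
\cdots \to \mathrm{H}^{i}_{\mathrm{rig},c}(\mathbb{A}^n_{\mathbb{F}_q}\setminus Z) \to \mathrm{H}^{i}_{\mathrm{rig},c}(\mathbb{A}^n_{\mathbb{F}_q}) \to \mathrm{H}^{i}_{\mathrm{rig},c}(Z) \to \mathrm{H}^{i+1}_{\mathrm{rig},c}(\mathbb{A}^n_{\mathbb{F}_q}\setminus Z) \to \cdots
\]
reduces everything to the first bullet: any Frobenius eigenvalue on $\mathrm{H}^{n-r+1+m}_{\mathrm{rig},c}(\mathbb{A}^n_{\mathbb{F}_q}\setminus Z)$ is an eigenvalue either on $\mathrm{H}^{n-r+m}_{\mathrm{rig},c}(Z)$ (covered by the first bullet) or on $\mathrm{H}^{n-r+1+m}_{\mathrm{rig},c}(\mathbb{A}^n_{\mathbb{F}_q})$, which is zero except when the degree equals $2n$, in which case the unique eigenvalue $q^n$ is trivially divisible by $q^{\epsilon_m}$ since $\epsilon_m\leq n/d_1\leq n$. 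So from now on I focus on the first bullet.

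\textbf{Chain-level analysis.} By the comparison theorem, $\mathrm{H}^\bullet_{\mathrm{rig},c}(Z)$ is the cohomology of an explicit Dwork complex $K^\bullet$ built from Dwork's splitting function $\exp(\pi\sum_j t_j f_j)$. It carries a natural Koszul-type filtration indexed by subsets $I\subseteq\{1,\ldots,r\}$ of the defining equations. On the associated graded piece indexed by $I$, the Frobenius operator is divisible by $q^{a(I)}$ with
\[
a(I)=\max\left\{0,\;\left\lceil\frac{n-\sum_{i\in I}d_i}{d_1}\right\rceil\right\},
\]
by the standard $p$-adic Newton-polygon estimate applied to $\exp(\pi\sum_{i\in I}f_i)$, as in \cite{wan:poles}. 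The spectral sequence associated with this filtration abuts to $\mathrm{H}^\bullet_{\mathrm{rig},c}(Z)$, and since Frobenius commutes with all the differentials, the $q$-divisibility on $E_1$ is inherited by $E_\infty$ and hence by the total cohomology.

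\textbf{Extracting $\epsilon_m$ and the main obstacle.} To read off the divisibility at cohomological degree $n-r+m$, I identify which graded pieces can contribute: the piece $\mathrm{gr}^{I}$ lives in total degree $|I|$ plus the Dwork/de Rham degree, which forces $|I|\geq r-m$, and among such $I$ the bound $a(I)$ is minimised by including the top-degree equations $\{f_1,\ldots,f_{r-m}\}$. Their contribution to the numerator is $d_1+\cdots+d_{r-m}$; the remaining $m$ slots (the smallest-degree equations) can only contribute through their effective degrees $d_i^*$, because once codimension $n-\dim Z$ is reached, adjoining further $f_i$ fails to cut down the dimension, so its full degree $d_i$ cannot be fed into the Newton-polygon estimate—instead it contributes $1$ if $d_i=d_1$ (so the ceiling $\lceil\cdot/d_1\rceil$ jumps by one) and $0$ otherwise, which is exactly the trichotomy in the definition of $d_i^*$. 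The principal technical obstacle is this refined combinatorial bookkeeping: one must verify that the ``wasted'' degree beyond the dimension bound contributes exactly $d_i^*$ rather than $d_i$, via a careful optimisation over $I$ with $|I|\geq r-m$ combined with a dimension-counting lemma controlling when adjoining $f_i$ is forced to drop the dimension. This refinement beyond \cite{wan:poles} is precisely what makes the $p$-adic bound $\epsilon_m$ genuinely stronger than anything available in $\ell$-adic cohomology outside the complete-intersection case.
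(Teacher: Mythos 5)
Your high-level plan — chain-level analysis of the Dwork complex, a filtration indexed by subsets of $\{1,\ldots,r\}$, slope estimates from the Dwork splitting functions — is the right picture and matches the paper's strategy. The reduction of the second bullet to the first via the open--closed long exact sequence also coincides with what the paper does. However, there are three genuine gaps that prevent this sketch from becoming a proof.

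First, the slope bound you state for the graded pieces,
\[
a(I)=\max\left\{0,\left\lceil\frac{n-\sum_{i\in I}d_i}{d_1}\right\rceil\right\},
\]
is not what the chain-level analysis yields, and it is too weak. The paper's estimate (Lemma~\ref{lemma:fredholm-determinant-of-alpha-on-bi}, obtained from Adolphson--Sperber's weight-function calculation on the cone $C(\Delta)$ of the polyhedron of $g=\sum x_{n+i}f_i$) reads
\[
\operatorname{ord}_q\lambda \;\geq\; \frac{1}{d_1}\Bigl(|I'|+\sum_{i\in I''}(d_1-d_i)\Bigr),
\]
where $I'$ tracks the de~Rham directions and $I''\subset\{1,\ldots,r\}$ tracks the Dwork variables; this is supplemented by the integrality bound $\operatorname{ord}_q\lambda\geq|I''|$ from \cite{wan:poles}*{Lemma~3.1}. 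Your $a(I)$ collapses these into a single number in a way that loses precisely the $(d_1-d_i)$ structure that is needed.

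Second, the passage to the $d_i^*$-weighted bound is not justified by your argument. You assert that the $m$ equations outside $I$ ``contribute'' $d_i^*$ because adjoining them fails to drop the dimension and hence the degree cannot be ``fed into'' the estimate; but this is a heuristic that has the wrong sign (a degree that cannot be used should weaken, not strengthen, the bound) and does not follow from the stated $a(I)$. The actual mechanism is purely combinatorial: for $j<r-c$ and $|I''|\geq r-j$, one has the inequality
\[
\sum_{i\in I''}(d_1-d_i-1) \;\geq\; \sum_{i=1}^{r-j}(d_1-d_i-1)-\sum_{i=r-j+1}^{r}d_i^*,
\]
whose proof requires carefully minimizing the left side over all admissible $I''$ (the $-1$'s coming from the tradeoff between $|I'|$ and $|I''|$ at fixed $|I|=n+r-j$ are essential and are absent from your $a(I)$). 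Nothing in your proposal pins down this inequality, so $\epsilon_m$ is never actually produced.

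Third, and most seriously, your argument only gives a $p$-adic slope bound ($\operatorname{ord}_q$) on the Frobenius eigenvalues, not divisibility in the ring of algebraic integers, which is what the theorem asserts. Upgrading a $p$-adic valuation estimate at one place to $q$-divisibility as algebraic integers requires knowing that all Galois conjugates of an eigenvalue satisfy the same bound. The paper devotes a separate bootstrap step (Lemmas~\ref{lemma:last-assertion} and \ref{lemma:last}) to this: one shows the reciprocal roots of the relevant Fredholm determinants $\det(1-t\alpha\mid B_I)$ are algebraic integers whose conjugates are again reciprocal roots, by writing $\det(1-t\alpha\mid B_I)$ as an alternating (infinite) product of shifted zeta functions $\zeta_{Z^*\cap X_{(E')}}(q^Mt)$ via M\"obius inversion of Dwork's trace formula. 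This step is entirely absent from your proposal and is not a routine add-on; without it, the conclusion as stated cannot be reached.

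In summary: you have the right architecture (filtered Dwork complex, Newton-polygon estimates, degree bookkeeping), but the quantitative slope bound you invoke is incorrect, the appearance of the $d_i^*$ is not derived, and the integrality bootstrap is missing.
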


The numbers \(\epsilon_{m}(n;d_1,\ldots,d_r)\) (\(m=0,1,\ldots, \dim Z-(n-r)\)) form an increasing
sequence in the closed interval \([\mu_0(n;d_1,\ldots,d_r),\nu_0(n;d_1,\ldots,d_r)]\),
the smallest one \(\epsilon_0(n;d_1,\ldots,d_r)=\mu_0(n;d_1,\ldots,d_r)\)
being responsible for the Ax--Katz theorem and the Esnault--Katz theorem; and we have
\[\epsilon_{\dim Z-(n-r)}(n;d_1,\ldots,d_r)=\nu_0(n;d_1,\ldots,d_r).\]

\medskip
Theorems~\ref{theorem:beyond-middle-dimension},~\ref{theorem:before-middle-dimension}
have projective analogues.
For a closed subvariety \(Z\) of \(\mathbb{P}^n_{\mathbb{F}_q}\), set
\[
\mathrm{H}^{\ast}_{\mathrm{rig}}(Z)_{\mathrm{prim}}=
\operatorname{Coker}(\mathrm{H}^{\ast}_{\mathrm{rig}}(\mathbb{P}_{\mathbb{F}_q}^n) \to \mathrm{H}^{\ast}_{\mathrm{rig}}(Z)).
\]

\begin{theorem}%
  \label{theorem:projective-bound}
  Let \(f_1,\ldots,f_r \in \mathbb{F}_{q}[x_0,\ldots,x_n]\)
  be homogeneous polynomials of positive degrees \(d_1 \geq \cdots \geq d_r\). Let \(Z\) be the vanishing scheme of \(f_1,\ldots,f_r\)
  in \(\mathbb{P}^n_{\mathbb{F}_q}\).
  Then, for \(0\leq j\leq \dim Z\),
  \begin{itemize}
  \item the Frobenius eigenvalues of \(\mathrm{H}^{\dim Z+j}_{\mathrm{rig}}(Z)_{\mathrm{prim}}\)
    are divisible, as algebraic integers, by \(q^{\nu_j(n+1;d_1,\ldots,d_r)}\);
  \item the Frobenius eigenvalues of
    \(\mathrm{H}^{\dim Z+1+j}_{\mathrm{rig},c}(\mathbb{P}^n_{\mathbb{F}_q}\setminus Z)\) are divisible, as
    algebraic integers, by \(q^{\nu_{j}(n+1;d_1,\ldots,d_r)}\).
  \end{itemize}
  For \(0 \leq m \leq  \dim Z - (n-r)\),
  \begin{itemize}
  \item the Frobenius eigenvalues of \(\mathrm{H}^{n-r+m}_{\mathrm{rig}}(Z)_{\mathrm{prim}}\)
    are divisible, as algebraic integers, by \(q^{\epsilon_{m}(n+1;d_1,\ldots,d_r)}\);
  \item the Frobenius eigenvalues of
    \(\mathrm{H}^{n-r+1+m}_{\mathrm{rig},c}(\mathbb{P}^n_{\mathbb{F}_q}\setminus Z)\) are divisible, as
    algebraic integers, by \(q^{\epsilon_{m}(n+1;d_1,\ldots,d_r)}\).

  \end{itemize}
\end{theorem}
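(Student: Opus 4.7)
The plan is to reduce Theorem~\ref{theorem:projective-bound} to the affine divisibility results (Theorems~\ref{theorem:beyond-middle-dimension} and~\ref{theorem:before-middle-dimension}) applied to the affine cone. Let $\tilde Z \subset \mathbb{A}^{n+1}_{\mathbb{F}_q}$ be the cone over $Z$, cut out by $f_1,\dots,f_r$ viewed as polynomials in $n+1$ variables; since $\dim \tilde Z = \dim Z + 1$, the affine theorems apply with $n$ replaced by $n+1$, yielding divisibility by $q^{\nu_{j'}(n+1;d_1,\ldots,d_r)}$ and $q^{\epsilon_m(n+1;d_1,\ldots,d_r)}$ of $\mathrm{H}^{\bullet}_{\mathrm{rig},c}(\tilde Z)$ and $\mathrm{H}^{\bullet}_{\mathrm{rig},c}(\mathbb{A}^{n+1}\setminus \tilde Z)$ in the corresponding cohomological ranges. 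I would first establish the complement bullets (items~2 and~4 of the theorem) and then derive the primitive cohomology bullets (items~1 and~3) from them.

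For the complement bullets, set $V := \mathbb{A}^{n+1}\setminus\tilde Z$ and $U := \mathbb{P}^n\setminus Z$. The scaling $\mathbb{G}_m$-action on $\mathbb{A}^{n+1}$ realizes $V$ as the punctured total space of the line bundle $\mathcal{L} = \mathcal{O}_{\mathbb{P}^n}(-1)|_U$ over $U$. The Gysin/localization long exact sequence for the zero section, combined with the Thom isomorphism $\mathrm{H}^i_{\mathrm{rig},c}(\mathcal{L})\cong \mathrm{H}^{i-2}_{\mathrm{rig},c}(U)(-1)$, takes the shape
$$\cdots\to\mathrm{H}^i_{\mathrm{rig},c}(V)\to\mathrm{H}^{i-2}_{\mathrm{rig},c}(U)(-1)\xrightarrow{\cup c_1(\mathcal{L})}\mathrm{H}^i_{\mathrm{rig},c}(U)\to\mathrm{H}^{i+1}_{\mathrm{rig},c}(V)\to\cdots.$$
In the relevant degree $i = \dim Z + 1 + j$, any Frobenius eigenvalue $\lambda$ of $\mathrm{H}^i_{\mathrm{rig},c}(U)$ either lifts to $\mathrm{H}^{i+1}_{\mathrm{rig},c}(V) = \mathrm{H}^{\dim\tilde Z + 1 + j}_{\mathrm{rig},c}(V)$, hence is divisible by $q^{\nu_j(n+1;\ldots)}$ via the affine theorem; or $\lambda = q\mu$ for a Frobenius eigenvalue $\mu$ of $\mathrm{H}^{\dim Z - 1 + j}_{\mathrm{rig},c}(U)$, which we treat by induction on $j$. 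An analogous dichotomy, using the before-middle range of Theorem~\ref{theorem:before-middle-dimension}, handles the $\epsilon_m$ bullet.

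For the primitive cohomology bullets, the closed embedding $Z\hookrightarrow\mathbb{P}^n$ fits into
$$\cdots\to\mathrm{H}^i_{\mathrm{rig}}(\mathbb{P}^n)\xrightarrow{\iota^*}\mathrm{H}^i_{\mathrm{rig}}(Z)\to\mathrm{H}^{i+1}_{\mathrm{rig},c}(\mathbb{P}^n\setminus Z)\to\mathrm{H}^{i+1}_{\mathrm{rig}}(\mathbb{P}^n)\to\cdots.$$
Passing to the cokernel of $\iota^*$ produces an injection $\mathrm{H}^i_{\mathrm{rig}}(Z)_{\mathrm{prim}}\hookrightarrow\mathrm{H}^{i+1}_{\mathrm{rig},c}(\mathbb{P}^n\setminus Z)$, so the divisibility bounds just established transfer directly to the primitive cohomology.

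The main obstacle lies in the Gysin descent: each iteration of the $\lambda=q\mu$ alternative gains exactly one power of $q$, yet $\nu_j-\nu_{j-2}$ can equal $2$ when $d_1\ge 3$, so a naive induction yields only divisibility by $q^{1+\nu_{j-2}}$, short by one. To close this gap I would appeal to two structural inputs: first, the Artin-type vanishing $\mathrm{H}^i_{\mathrm{rig},c}(V)=0$ for $i<n+1$ (since $V$ is smooth affine of dimension $n+1$) truncates the iterative descent at a controlled cohomological degree; second, the class $c_1(\mathcal{L})=-h|_U$ is pulled back from $\mathbb{P}^n$, so the image of $\cup c_1(\mathcal{L})$ interacts with the LES decomposition $\mathrm{H}^i_{\mathrm{rig},c}(U)\approx \mathrm{H}^{i-1}_{\mathrm{rig}}(Z)_{\mathrm{prim}}\oplus \ker(\mathrm{H}^i_{\mathrm{rig}}(\mathbb{P}^n)\to \mathrm{H}^i_{\mathrm{rig}}(Z))$ in a structured way that should absorb the missing power of $q$. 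If this structural refinement does not suffice, the fallback is to run the Dwork-cohomological chain-level argument that produced Theorems~\ref{theorem:beyond-middle-dimension} and~\ref{theorem:before-middle-dimension} directly in the projective setting, using the Newton polytope of the homogenized ideal in place of the affine one.
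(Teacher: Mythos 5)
Your reduction to the affine cone is the right starting idea, and the observation $\mathrm{H}^i_{\mathrm{rig}}(Z)_{\mathrm{prim}}\hookrightarrow\mathrm{H}^{i+1}_{\mathrm{rig},c}(\mathbb{P}^n\setminus Z)$ from the pair $(\mathbb{P}^n,Z)$ is correct. But your logical order is inverted relative to what actually works, and the step you have made load-bearing is the one with the gap you yourself flag. You propose to first establish the bullets for $U=\mathbb{P}^n\setminus Z$ by comparing $V=\mathbb{A}^{n+1}\setminus\widehat{Z}$ to $U$ via the Gysin/Thom sequence for the punctured line bundle, and then to deduce the primitive bullets from those. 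The Gysin route is genuinely broken: the dichotomy ``either $\lambda$ lifts to $\mathrm{H}^{i+1}_c(V)$ or $\lambda=q\mu$'' loses one power of $q$ per two steps of cohomological degree, and since $\nu_j-\nu_{j-2}$ can be $2$, a straight induction stalls at $q^{1+\nu_{j-2}}$. Neither of your two proposed patches closes it: Artin vanishing only truncates the recursion without sharpening the base case, and the assertion that cup product with $c_1(\mathcal{L})$ ``should absorb the missing power of $q$'' is not a proof, nor is there an evident mechanism for it.

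The fix is to run the deduction in the opposite direction and, crucially, to work with the line bundle over $Z$ rather than over $U$. Let $L\to\mathbb{P}^n$ be the geometric line bundle for $\mathcal{O}(-1)$ and consider the restriction $L|_Z$, whose zero section is $Z$ and whose complement of the zero section is $\widehat{Z}\setminus\{0\}$. The localization sequence for compactly supported rigid cohomology of the open $\widehat{Z}\setminus\{0\}$ inside $L|_Z$ reads
\begin{equation*}
\cdots\to\mathrm{H}^{i}_{\mathrm{rig},c}(L|_Z)\xrightarrow{\ u\ }\mathrm{H}^{i}_{\mathrm{rig}}(Z)\to\mathrm{H}^{i+1}_{\mathrm{rig},c}(\widehat{Z}\setminus\{0\})\to\cdots.
\end{equation*}
The decisive point you miss is that $u$ factors through $\mathrm{H}^{i}_{\mathrm{rig},c}(L)\to\mathrm{H}^{i}_{\mathrm{rig}}(\mathbb{P}^n)\to\mathrm{H}^{i}_{\mathrm{rig}}(Z)$, so $\mathrm{Im}(u)$ lands in the non-primitive part; hence $\mathrm{H}^{i}_{\mathrm{rig}}(Z)_{\mathrm{prim}}$ injects Frobenius-equivariantly into $\mathrm{H}^{i+1}_{\mathrm{rig},c}(\widehat{Z}\setminus\{0\})$. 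For $i>0$ the restriction $\mathrm{H}^{i+1}_{\mathrm{rig},c}(\widehat{Z})\to\mathrm{H}^{i+1}_{\mathrm{rig},c}(\widehat{Z}\setminus\{0\})$ is an isomorphism, so the primitive bullets follow directly from Theorems~\ref{theorem:beyond-middle-dimension} and~\ref{theorem:before-middle-dimension} applied to $\widehat{Z}\subset\mathbb{A}^{n+1}$, with no Gysin descent and no inductive bookkeeping. The complement bullets then drop out of the long exact sequence of the pair $(\mathbb{P}^n,Z)$ --- the reverse of the dependency you set up. Your fallback (redoing the Dwork chain-level argument in the projective setting) is not needed.
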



\begin{remark}
These divisibility theorems in rigid cohomology now raise new questions for \(\ell\)-adic cohomology and Hodge
theory, through the ``motivic'' philosophy.
\begin{itemize}
\item The Frobenius eigenvalues of \(\ell\)-adic cohomology groups of
affine or projective \(Z\) should also satisfy the divisibility stated
in Theorems~\ref{theorem:beyond-middle-dimension},
\ref{theorem:before-middle-dimension} and
\ref{theorem:projective-bound}.
\item For affine or projective varieties defined over the field
\(\mathbb{C}\) of complex numbers, the numbers
\[
  \nu_{j}(n;d_1,\ldots,d_r) \text{ and } \epsilon_m(n;d_1,\ldots,d_r)
\]
should give lower bounds for Hodge levels of compactly supported
singular cohomology of complex varieties cut out by a set of
polynomial equations of degrees \(d_1,\ldots,d_r\).
\end{itemize}
Our method, purely analytic, is certainly not applicable for \(\ell\)-adic
cohomology. But the chain level considerations may be useful for the
problem on Hodge levels.
\end{remark}

The paper is organized as follows.
Section~\ref{sec:exponential-model} introduces the overconvergent
Dwork cohomology, and provides the construction of a specific chain
model for computing it.  It also contains the statement of a
theorem of Baldassarri and Berthelot
\cite{baldassarri-berthelot:dwork-cohomology-for-singular-hypersurfaces}
which allows us to relate the overconvergent Dwork cohomology
with the compactly supported rigid cohomology of an affine variety.
Section~\ref{sec:visibility} proves
Theorem~\ref{theorem:cancellation}.  Section~\ref{sec:artin-hasse}
explains how to transplant the theory of Adolphson and Sperber,
which uses a more complicated model of Dwork cohomology, to
the situation that concerns us.  After proving two lemmas in
Section~\ref{sec:algebra}, we provide the proofs of
Theorems~\ref{theorem:beyond-middle-dimension}
and~\ref{theorem:before-middle-dimension} in
Section~\ref{sec:proof-bound}.

\subsection*{Notation and conventions}

Throughout the main body of the paper, \(\mathbb{N} = \{0,1,\ldots\}\)
denotes the set of \emph{non-negative} integers.  We fix a prime
number \(p\), and let \(q\) be a power of \(p\).  Let
\(\mathcal{O}_{K} = W(\mathbb{F}_{q})[\zeta_{p}]\), where
\(\zeta_{p} \neq 1\) is a \(p\textsuperscript{th}\) root of unity.
Note that \(\mathcal{O}_{K}\) contains an element \(\pi\) satisfying
\(\pi^{p-1}+p=0\).  Let \(K\) be the field of fractions of
\(\mathcal{O}_{K}\).  Let \(|\cdot|\) be the ultrametric on \(K\)
extending that of \(\mathbb{Q}_{p}\).  The \(p\)-power Frobenius map
of \(\mathbb{F}_{q}\) lifts to an automorphism
\(\tau \in \mathrm{Gal}(K/\mathbb{Q}_{p}(\zeta_{p}))\) such that
\(\tau(\zeta_{p})=\zeta_{p}\) (thus \(\tau(\pi)=\pi\)), and
\(\tau^{q}=\mathrm{Id}\).

We consider exclusively rigid cohomology over the base field \(K\).
Thus, for an algebraic variety \(X\) over \(\mathbb{F}_{q}\), and an
overconvergent F-isocrystal \(\mathcal{E}\) on \(X\), the rigid
cohomology groups \(\mathrm{H}^{i}_{\mathrm{rig,c}}(X;\mathcal{E})\)
and \(\mathrm{H}^{i}_{\mathrm{rig}}(X;\mathcal{E})\), are all finite
dimensional \(K\)-vector spaces (by
Kedlaya~\cite{kedlaya:finiteness-rigid-cohomology}).  When
\(\mathcal{E}=\mathcal{O}_{X}\) is the constant isocrystal, we will
simply write \(\mathrm{H}^{i}_{\mathrm{rig}}(X)\) or
\(\mathrm{H}^{i}_{\mathrm{rig,c}}(X)\) for its rigid cohomology.

\subsection*{Acknowledgment}
We are grateful to Steve Sperber for useful discussions and for
sharing Tsuzuki's proof of Theorem~\ref{theorem:comparison} with us.
We also thank Yichen Qin for pointing out many inaccuracies in the
paper.

\section{Overconvergent Dwork cohomology}
\label{sec:exponential-model}
After establishing the rationality of the zeta function, Dwork proceeded
to pioneer the study of \(p\)-adic absolute values of the reciprocal roots and
zeros of zeta functions of an algebraic variety \(Z\) over \(\mathbb{F}_{q}\).
He proved the famous ``Newton above Hodge'' theorem when \(Z\)
is a nonsingular hypersurfaces in a projective
space~\cite{dwork:zeta-function-hypersurface-1}.
He accomplished this by devising some explicit chain complexes of
\(p\)-adic Banach spaces, which are equipped with some chain-level
representations of the Frobenius operation.  A substantial portion of
his work was centered around the chain-level investigations.  He only
delved into the cohomology spaces under special occasions where finite
dimensionality can be shown.  But even without knowing finiteness of
the cohomology spaces, the chain level approach can still extract
numerous properties of the zeta function.

Actually, Dwork didn't just design one chain complex, but rather
infinitely many different chain complexes, each corresponds to a
specific ``splitting function'' in his terminology.  The
chain-level Frobenius operators associated with these chain models are
all capable of calculating the zeta function of a variety, regardless
of whether the variety is nonsingular or not.  In some applications,
such as the ``visibility theorem''~\ref{theorem:cancellation},
knowledge of some formal properties of these operators suffices.  For
more intricate analysis, such as many important results of Adolphson
and
Sperber~\cite{adolphson-sperber:chevalley-warning,adolphson-sperber:newton-polyhedra-degree-l-function,adolphson-sperber:exponential-sums-newton-polyhedra},
and the ``divisibility theorem'' that relies on them, some
specific chain model needed to be used.

Since our first goal is to prove the visibility theorem, in this
section we will focus exclusively on a most straightforward
(overconvergent) Dwork complex.  In effect, it is the overconvergent
de~Rham complex of an exponentially twisted integrable connection on a
\(p\)-adic polydisk.  We shall state a theorem of Baldassarri and Berthelot which relates the
overconvergent Dwork cohomology of an affine variety \(Z\) with the
rigid cohomology of \(Z\) with compact support.

The role of a subtler model, the \emph{Artin--Hasse model}, will be
clarified in a subsequent section, before we prove the divisibility
theorem.

\begin{definition}\label{definition:dwork-crystal}
On the structure sheaf of the rigid analytic affine line
\(\mathbb{A}^{1,\mathrm{an}}_K\) (with coordinate \(z\)), we define an
integrable connection \(\nabla_{\pi}\) by the formula
\begin{equation*}
\nabla_{\pi}\xi = \mathrm{d}\xi + \pi \xi \mathrm{d}z.
\end{equation*}
The connection is overconvergent.  It is equipped with a Frobenius
structure
\begin{equation*}
\varphi_{\pi}\colon \xi \mapsto \xi^{\tau}(z^p) \cdot \theta(z)^{-1}
\end{equation*}
where \(\theta(z)\) is the Dwork exponential
\begin{equation*}
\theta(z) = \exp(\pi z - \pi z^{p}).
\end{equation*}
It is well-known that the radius of convergence of \(\theta(z)\) is
\(|p|^{-\frac{p-1}{p^2}}\), thus overconvergent
(cf.~\cite[p.~396, Theorem (b)]{robert:p-adic-analysis}).
The pair \((\nabla_{\pi}, \varphi_{\pi})\) defines an overconvergent
F-isocrystal \(\mathcal{L}_{\pi}\) on \(\mathbb{A}^{1}_{k}\), called
the \emph{Dwork crystal}.  See~\cite[\S4.2.1 and
\S8.3]{le-stum:rigid-cohomology} for more details.  The dual
isocrystal of \(\mathcal{L}_{\pi}\) is \(\mathcal{L}_{-\pi}\).
\end{definition}

\begin{situation}\label{situation}%
Let \(f_{1},\ldots,f_{r} \in \mathbb{F}_{q}[x_{1},\ldots,x_{n}]\).
Let
\(Z=\operatorname{Spec}(\mathbb{F}_{q}[x_{1},\ldots,x_{n}]/(f_{1},\ldots,f_{r}))\).
Introducing new variables \(x_{n+1},\ldots,x_{n+r}\), let
\(g = x_{n+1}f_{1} + \cdots + x_{n+r}f_{r}\).  We shall refer to the
rigid cohomology with twisted coefficient
\(\mathrm{H}^{\bullet}_{\mathrm{rig}}(\mathbb{A}_{\mathbb{F}_{q}}^{n+r};g^{\ast}\mathcal{L}_{\pi})\),
as the ``overconvergent Dwork cohomology'' of \(Z\).
\end{situation}

As we have said, the benefit to be able to work with the overconvergent
Dwork cohomology is that it has a rather explicit chain-level model.
Let us now elaborate on the de~Rham complex that is used to compute
the overconvergent Dwork cohomology.

\begin{construction}[Overconvergent Dwork complex]
\label{construction:exponential-model-complex}
Consider the Monsky--Washnitzer algebra
\begin{align*}
  B &= K\langle x_{1},\ldots,x_{n+r} \rangle^{\dagger}\\
  &=
    \left\{
    \sum_{u\in \mathbb{N}^{n+r}} c_{u} x^{u}  \in K[\![x_{1},\ldots,x_{n+r}]\!] : \exists \rho > 1,
    |c_{u}|\rho^{|u|} \xrightarrow{|u| \to \infty} 0
    \right\},
\end{align*}
where \(|u| = |u_{1}| + \cdots + |u_{n+r}|\), \(x^{u} = \prod_{i=1}^{n+r} x_{i}^{u_{i}}\).
For a subset \(I=\{i_{1},\ldots,i_{m}\}\) of \(\{1,\ldots,n+r\}\), with \(i_{1}<\cdots<i_{m}\),
we write \(x^{I} = \prod_{i\in I}x_{i}\),
and \(dx^{I} = dx_{i_{1}}\wedge\cdots\wedge dx_{i_{m}}\).
We use the notation \(\Omega^{m}\) to denote the space of ``overconvergent \(m\)-forms'',
that is,
\begin{equation}
\label{eq:decompose-differential-forms}
\Omega^{m} = \bigoplus_{\substack{I\subset\{1,\ldots,n+r\}\\|I|=m}} B\cdot dx^{I}
= \bigoplus_{\substack{I\subset\{1,\ldots,n+r\}\\|I|=m}} B_{I} \frac{dx^{I}}{x^{I}}
\end{equation}
where \(B_{I} = x^{I}B\).

Write \(g = \sum_{u\in \mathbb{N}^{n+r}} a_{u}x^{u}\), with
\(a_{u}\in \mathbb{F}_{q}\).  Let \(A_{u}\) be the Teichmüller lift of
\(a_{u}\) in \(W(\mathbb{F}_{q}) \subset \mathcal{O}_{K}\), and let
\(G = \sum_{u\in \mathbb{N}^{n+r}}A_{u}x^{u}\).  Then
\(G \equiv g \mod \pi\), and the overconvergent Dwork cohomology is
computed by the following exponentially twisted de~Rham complex:
\begin{equation}
\label{eq:dwork-complex}
\mathcal{D}^{\bullet}: \quad\Omega^{0} \xrightarrow{d + \pi dG} \Omega^{1} \xrightarrow{d + \pi dG} \cdots \xrightarrow{d + \pi dG} \Omega^{n+r}.
\end{equation}
That is, we have
\begin{equation*}
\mathrm{H}^{\ast}_{\mathrm{rig}}(\mathbb{A}^{n+r}_{\mathbb{F}_{q}},g^{\ast}\mathcal{L}_{\pi}) \simeq
\mathrm{H}^{\ast}(\mathcal{D}^{\bullet}).
\end{equation*}
\end{construction}

\begin{remark}%
The complex \(\mathcal{D}^{\bullet}\) is said to be exponentially
twisted because symbolically we have
\[
d + \pi dG = \exp(-\pi G) \circ d \circ \exp(\pi G).
\]
\end{remark}

\begin{remark}%
\label{remark:exponential-twist}
Dwork cohomology originated in Dwork's study of zeta functions of
projective hypersurfaces
\cite{dwork:zeta-function-hypersurface-1,dwork:zeta-function-hypersurface-2}.
The overconvergent Dwork cohomology
\(\mathrm{H}^{\ast}_{\mathrm{rig}}(\mathbb{A}^{n+r};g^{\ast}\mathcal{L}_{\pi})\)
is an overconvergent variant of Dwork's construction.  Dwork used
Banach spaces rather than weakly completed algebras in the sense of
Monsky--Washnitzer.  His construction was systematically generalized
in the context of toric exponential sums by Adolphson and Sperber
\cite{adolphson-sperber:exponential-sums-newton-polyhedra} (still
using Banach spaces instead of weakly completed versions).
Apparently, the nice properties of overconvergent Dwork cohomology was
first studied by Monsky~\cite{monsky:formal-cohomology-3}.
\end{remark}

\begin{construction}[Frobenius action on the overconvergent Dwork complex]
\label{construction:frobenius}
The inverse image isocrystal \(g^{\ast}\mathcal{L}_{\pi}\) has a
Frobenius structure, which can also be explained using the exponential
twist.  Let us denote by \(\sigma\) the endomorphism of \(B\)
defined by
\begin{equation*}
\sigma\colon \sum_{u\in \mathbb{N}^{n+r}} a_{u} x^{u} \mapsto \sum_{u\in\mathbb{N}^{n+r}} a_{u}^{\tau} x^{pu}.
\end{equation*}
Then the Frobenius structure on \(g^{\ast}\mathcal{L}_{\pi}\) with
respect to \(\sigma\) can be symbolically determined via exponential twist
(Remark~\ref{remark:exponential-twist}) as follows:
\begin{align*}
  \varphi(\xi)
  &= \prod_{u} \exp(-\pi A_{u} x^{u}) \cdot \sigma\left( \prod_{u} \exp(\pi A_{u} x^{u}) \cdot \xi \right) \\
  &= \prod_{u} \exp(\pi A_{u}^{\tau} x^{pu} -\pi A_{u} x^{u} ) \cdot \xi^{\sigma} \\
  &= \prod_{u} \theta(A_{u} x^{u})^{-1} \cdot \xi^{\sigma},
\end{align*}
where \(u\) ranges in \(\mathbb{N}^{n+r}\), and \(\theta\) is the
Dwork exponential defined above.  Since \(\theta\) is overconvergent,
it follows that \(\varphi\) indeed takes \(B\) into \(B\).  Recall \(\mathbb{F}_{q} = \mathbb{F}_{p^{a}}\).
Let
\begin{equation}
\label{eq:frobenius-function}
F_{1}(x) = \prod_{u} \theta(A_{u}x^{u}), \quad \text{and} \quad F_{a}(x) = \prod_{i=0}^{a-1}F^{\tau^{i}}_{1}(x^{p^{i}}).
\end{equation}
Then both \(F_{1}(x)\) and \(F_{a}(x)\) are overconvergent analytic
functions in \((x_{1},\ldots,x_{n+r})\).  Whence
\(\varphi = F_{1}^{-1} \circ \sigma\).

The Frobenius \(\varphi\) induces an operation on the spaces
\(\Omega^{m}\) of differential forms.
\begin{equation*}
\varphi^{(m)}\left(\sum_{\substack{I\subset\{1,\ldots,n+r\}\\|I|=m}}\xi_{I}(x) \frac{dx_I}{x_{I}}\right)
= p^{m}\varphi(\xi_{I}(x))\frac{dx_{I}}{x_{I}},\quad \xi_I \in B_{I}.
\end{equation*}
One checks that the above definition turns
\((\varphi^{(m)})_{m=0}^{n+r}\) into a chain map \(\varphi^{(\bullet)}\colon \mathcal{D}^{\bullet} \to \mathcal{D}^{\bullet}\):
\begin{equation*}
\begin{tikzcd}
\Omega^{0} \ar[rr,"d + \pi dG"] \ar[d,"\varphi^{(0)}"] & &  \Omega^{1} \ar[rr,"d + \pi dG"] \ar[d,"\varphi^{(1)}"]& &  \cdots \ar[rr,"d + \pi dG"] & &  \Omega^{n+r} \ar[d,"\varphi^{(n+r)}"] \\
\Omega^{0} \ar[rr,swap,"d + \pi dG"] & &  \Omega^{1} \ar[rr,swap,"d + \pi dG"] & &  \cdots \ar[rr,swap,"d + \pi dG"] & &  \Omega^{n+r}.
\end{tikzcd}
\end{equation*}
It induces the semilinear Frobenius map on rigid cohomology
\(\varphi\colon\mathrm{H}^{\ast}_{\mathrm{rig}}(\mathbb{A}^{n+r};g^{\ast}\mathcal{L}_{\pi})\to\mathrm{H}^{\ast}_{\mathrm{rig}}(\mathbb{A}^{n+r};g^{\ast}\mathcal{L}_{\pi})\).
\end{construction}

\begin{remark}
While the Frobenius operator \(\varphi\) behaves well after taking
cohomology, it is ill-suited for chain-level manipulations due to
being an ``expanding map'', not completely continuous in the sense of
Serre~\cite{serre:completely-continuous-endomorphisms-on-p-adic-banach-spaces},
that is, \(\varphi\) is not a uniform limit of finite rank linear
maps.

This analytic issue is common to F-isocrystals in general, not limited
to the Dwork isocrystal.  Consider the simplest example of the
standard Frobenius acting on the trivial isocrystal on
\(\mathbb{A}^{1}\), when \(q = p\).  To take advantage of
overconvergence, one is led to examine the Frobenius operation on
\(p\)-adic disks slightly larger than the unit disk.  However, the
\(p\)-power Frobenius map \(a \mapsto a^{p}\) takes the 1-dimensional
closed ball of radius \(|\pi|^{-1/N}\) to the \emph{larger} closed
ball of radius \(|\pi|^{-p/N}\).  The Frobenius pullback induces a map
on functions:
\[
\xi (x) \mapsto \xi(x^{p}) \colon \left\{ \sum a_{n} x^{n} : a_{n} \pi^{-n/N} \to 0 \right\}
\to \left\{ \sum b_{n} x^{n} : b_{n} \pi^{-pn/N} \to 0 \right\}.
\]
After extending the field \(K\) to a larger scalar field by adding
\(N\)\textsuperscript{th} roots of \(\pi\), an orthonormal basis of
the Banach space
\(\left\{\sum a_{n} x^{n} : a_{n} \pi^{-n/N} \to 0\right\}\) is given
by \(1, \pi^{1/N} x, (\pi^{1/N} x)^{2}, \ldots\), and an orthonormal
basis of \(\left\{\sum a_{n} x^{n} : a_{n} \pi^{-pn/N} \to 0\right\}\)
is given by \(1, \pi^{p/N} x, (\pi^{p/N} x)^{2}, \ldots\).  The
columns of the matrix representing the linear mapping
\(\xi(x) \mapsto \xi(x^{p})\) with respect to these bases have larger
and larger absolute values.  On the other hand, the norm of the column
vectors of a completely continuous operator should converge to \(0\).
\end{remark}

\begin{construction}[Dwork operators]\label{construction:dwork-operator}
To fix this, following Dwork, we
consider the following operator  \(\psi\) on power series:
\begin{equation}\label{eq:psi1}
\psi\left( \sum_{u\in \mathbb{N}^{n+r}} a_{u} x^{u} \right) = \sum_{u\in \mathbb{N}^{n+r}} a_{pu} x^{u}.
\end{equation}
Recall \(q=p^{a}\), and \(\tau^{a} = \mathrm{Id}\).  Then we have
\begin{align}\label{eq:psi2}
(\tau^{-1}\circ \psi)^{a}\left( \sum a_{u}x^{u} \right)
  &= \sum a_{qu}^{\tau^{-a}} x^{u} = \sum a_{qu}x^{u} \\
  &= \psi^{a}\left( \sum a_{u}x^{u} \right)\nonumber
\end{align}
as well as
\begin{equation*}
\eta \cdot  (\tau^{-1} \circ \psi)(\xi)
= (\tau^{-1}\circ\psi)(\eta^{\tau}(x^{p}) \cdot \xi(x)).
\end{equation*}

Form the composition
\begin{equation*}
\alpha_{1} = \tau^{-1} \circ \psi \circ F_{1}, \quad\text{as well as}\quad
\alpha_{a} = \alpha_{1}^{a}.
\end{equation*}
Then by \eqref{eq:psi1} and \eqref{eq:psi2},
\begin{align*}
  \alpha_{a} &= \alpha_{1}^{a}
  = \underbrace{(\tau^{-1} \circ \psi \circ F_{1}) \circ \cdots \circ(\tau^{-1} \circ \psi \circ F_{1})}_{a\text{ times}}, \\
  &= (\tau^{-1} \circ \psi)^{a}\circ \left( \prod_{i=0}^{a-1} F_{1}^{\tau^{i}}(x^{p^{i}}) \right) \\
  &= \psi^{a} \circ F_{a}.
\end{align*}

It is clear that \(\alpha_{1}\) preserves \(B_{I}\) for any \(I \subset \{1,2,\ldots,n+r\}\); and is a left inverse to
\(\varphi\): \(\alpha_{1}\circ\varphi = \mathrm{Id}\).  Similarly,
\(\alpha_{a}\) also preserves \(B_{I}\), and is a left inverse to the
\(a\textsuperscript{th}\) iteration of \(\varphi\).  Note that
\(\alpha_{1}\) is a \(\tau^{-1}\)-semilinear map on the infinite
dimensional \(K\)-vector space \(B_{I}\), and \(\alpha_{a}\) is a \(K\)-linear
operator on \(B_{I}\).  In the terminology of
Monsky~\cite[Definition~2.1]{monsky:formal-cohomology-3},
\(\alpha_{1}\) and \(\alpha_{a}\) are ``Dwork operators'' on the spaces
\(B_{I}\)
(the power series \(F_{1}(x)\) and \(F_{a}(x)\) are integrally defined, i.e., they are elements of
\(\mathcal{O}_{K}[\![x_{1},\ldots,x_{n+r}]\!] \cap B_{I}\)).  Moreover, both \(\alpha_{1}\) and
\(\alpha_{a}\) are ``nuclear operators'' on the space
\(B\) in the sense of Monsky~\cite[Theorem~2.1]{monsky:formal-cohomology-3}.  Therefore,
their ``characteristic power series'' \cite[Theorem~1.2]{monsky:formal-cohomology-3}
are well-defined.

The operators \(\alpha_{1}\) and \(\alpha_{a}\) extends to endomorphsms of
the Dwork complex.  We define
\(\alpha_{1}^{(m)} \colon \Omega^{m} \to \Omega^{m}\) by the following
formula:
\begin{equation*}
\alpha_{1}^{(m)}
\left(\sum_{\substack{I\subset\{1,\ldots,n+r\}\\|I|=m}}\xi_I(x) \frac{dx_I}{x_{I}}\right)
= \alpha_{1}(\xi_I(x)) \frac{dx_I}{x_I}, \quad \xi_I \in B_{I}.
\end{equation*}
Then \(\alpha_{1}^{(m)} \circ \varphi^{(m)} = p^{m} \mathrm{Id}\), and
the following diagram is commutative:
\begin{equation*}
\begin{tikzcd}
\Omega^{0} \ar[rr,"d + \pi dG"] & & \Omega^{1} \ar[rr,"d + \pi dG"] & & \cdots \ar[rr,"d + \pi dG"] & & \Omega^{n+r},\\
\Omega^{0} \ar[rr,swap,"d + \pi dG"] \ar[u,"\alpha_{1}^{(0)}"] & & \Omega^{1} \ar[rr,swap,"d + \pi dG"] \ar[u,swap,"p^{-1}\alpha_{1}^{(1)}"]& & \cdots \ar[rr,swap,"d + \pi dG"] & & \Omega^{n+r} \ar[u,"p^{-n-r}\alpha_{1}^{(n+r)}",swap].
\end{tikzcd}
\end{equation*}
Similarly, we may define
\(\alpha_{a}^{(m)}\colon\Omega^{m}\to\Omega^{m}\), satisfying
\(\alpha_{a}^{(m)}\circ(\varphi^{(m)})^{a}=q^{m}\mathrm{Id}\), and the
operators \(q^{-m}\alpha_{a}^{(m)}\) induce a chain map of
\(\mathcal{D}^{\bullet}\).  Thus, the operators
\((p^{-m}\alpha_{1}^{(m)})_{m=0}^{n+r}\) and
\((q^{-m}\alpha_{a}^{(m)})_{m=0}^{n+r}\) induce maps on overconvergent
Dwork cohomology:
\begin{equation*}
\alpha_{1}, \alpha_{a}\colon \mathrm{H}^{\ast}_{\mathrm{rig}}(\mathbb{A}^{n+r};g^{\ast}\mathcal{L}_{\pi})
\to \mathrm{H}^{\ast}_{\mathrm{rig}}(\mathbb{A}^{n+r};g^{\ast}\mathcal{L}_{\pi}).
\end{equation*}
Since \(\alpha_{a}^{(m)}\) (resp.~\(\alpha_{1}^{(m)}\)) is a left inverse
to \(\varphi^{(m)}\) (resp., \((\varphi^{(m)})^{a}\)) on the chain
level, and since the overconvergent Dwork cohomology is finite
dimensional, we find on the cohomology level, \(q^{-m}\alpha_{a}^{(m)}\) is
\emph{equal} to the inverse to \((\varphi^{(m)})^{a}\).  In particular
\begin{align}\label{eq:dwork-vs-frob}
&\det(1 - t\cdot q^{-m}\alpha_{a} | \mathrm{H}^{m}_{\mathrm{rig}}(\mathbb{A}^{n+r};g^{\ast}\mathcal{L}_{\pi})) \\
=& \det(1 - t\cdot(\varphi^{(m)})^{-a} | \mathrm{H}^{m}_{\mathrm{rig}}(\mathbb{A}^{n+r};g^{\ast}\mathcal{L}_{\pi})).\nonumber
\end{align}
\end{construction}

\begin{remark}[Fredholm determinant]%
For every subset \(I\) of \(\{1,2,\ldots,n+r\}\), the collection
\begin{equation}
\label{eq:basis}
\{x^u: u\in \mathbb{N}^{n+r}, u_i \geq 1 \text{ if } i \in I\}
\end{equation}
is a ``basis'' of \(B_I\) in the sense that every \(\xi \in B_I\) can
be written uniquely as an infinite linear combination
\(\xi = \sum a_u x^u\) for some \(a_u \in K\).  Furthermore, any
continuous linear operator on \(B_I\) can be represented by a unique
infinite matrix.

According to Monsky's theory~\cite[Theorem~1.6]{monsky:formal-cohomology-3},
the matrix associated to the Dwork operator \(\alpha_a\) has a
well-defined Fredholm determinant, which equals the
\emph{characteristic power series} he defined.  This is because that \(B_I\)
is a union of a certain \(\alpha_a\)-invariant Banach subspaces
\(B_I(b)\): \(B_I=\bigcup_{b>0} B_I(b)\).  Here, \(B_I(b)\) is the
space of rigid analytic functions converging on the closed polydisk of
radius \(|p|^{-b}\).  The operators \(\alpha_a|_{B_I(b)}\) are all
completely continuous in the sense of Serre~\cite{serre:completely-continuous-endomorphisms-on-p-adic-banach-spaces}.  An appropriately scaled version of
\eqref{eq:basis} is an orthonormal basis of the Banach space
\(B_I(b)\).  For this reason, in the remainder of this paper, we will
simply refer to the characteristic power series of \(\alpha_a|_{B_{I}}\) as
the \emph{Fredholm determinant} of \(\alpha_a\), denoted as
\(\det(1 - t\alpha_a| B_{I})\).

On occasion, we will also encounter other overconvergent spaces, such
as the spaces \(C\) and \(C/B\) (who will show up in the proof of
Theorem~\ref{theorem:dwrok-trace-formula}) among others.  In these
situations, when we refer to the term ``basis'', it should be
interpreted in a manner analogous to the one explained in the
preceding paragraph.  Consequently, the characteristic power series of Dwork
operators on these spaces can all be computed as the Fredholm
determinants of the infinite matrices associated to the specified
``bases''.  Ergo, we will not distinguish between the terms
``characteristic power series'' and ``Fredholm determinant''.
\end{remark}

Now that we have completed the groundwork concerning overconvergent
Dwork cohomology, we are ready to present a theorem of Baldassarri and
Berthelot that establishes a connection between overconvergent Dwork
cohomology and the rigid cohomology of \(Z\).  This theorem serves as
the cornerstone for our chain-level arguments.

\begin{theorem}[{\cite[Theorem~3.1]{baldassarri-berthelot:dwork-cohomology-for-singular-hypersurfaces}}]\label{theorem:comparison}
Notation as in Situation~\ref{situation}, we have a natural isomorphism
\begin{equation*}
\mathrm{H}^{\ast}_{\mathrm{rig}}(\mathbb{A}_{\mathbb{F}_{q}}^{n+r};g^{\ast}\mathcal{L}_{\pi})
\simeq \mathrm{H}^{\ast}_{\mathrm{rig},Z}(\mathbb{A}_{\mathbb{F}_{q}}^{n})
\end{equation*}
which is compatible with Frobenius actions.
\end{theorem}

In fact, the following local version of the theorem is true.

\begin{theorem}[{\cite[Theorem~2.14]{baldassarri-berthelot:dwork-cohomology-for-singular-hypersurfaces}}]%
\label{theorem:local-comparison}
Let notation be as in Situation~\ref{situation}.  Let
\(\varpi\colon \mathbb{A}^{n+r} \to \mathbb{A}^{n}\) be the projection
to the first \(n\) coordinates.  Let \(\mathcal{L}\) denote the
arithmetic \(\mathscr{D}\)-module associated to the Dwork crystal
\(g^{\ast}\mathcal{L}_{\pi}\) on \(\mathbb{A}^{n+r}\).  Then \(\varpi_{+}\mathcal{L}\) is
isomorphic to the local cohomology complex
\(\mathbb{R}\underline{\Gamma}_{Z}^{\dagger}(\mathcal{O}_{\widehat{\mathbb{P}}^{n},\mathbb{Q}}({}^{\dagger}H))[r]\).
\end{theorem}

Here,
\(\mathcal{O}_{\widehat{\mathbb{P}}^{n},\mathbb{Q}}({}^{\dagger}H)\)
is the sheaf of functions on the formal projective space
\(\widehat{\mathbb{P}}^{n}\) with overconvergent singularities along
the infinity hyperplane \(H \subset \mathbb{P}^{n}_{\mathbb{F}_{q}}\).
This is what was denoted as
\(\mathcal{O}_{\widehat{\mathbb{P}}^{n},\mathbb{Q}}(\infty)\) in
\cite{baldassarri-berthelot:dwork-cohomology-for-singular-hypersurfaces}.
See Example~\ref{example:structure-sheaf-of-an} for more details.

\begin{remark}
Theorems \ref{theorem:comparison} and \ref{theorem:local-comparison}
were originally proved by Baldassarri and Berthelot for the case where
\(Z\) is a complete intersection.  They noted
\cite[p.~208, Remark]{baldassarri-berthelot:dwork-cohomology-for-singular-hypersurfaces}
that this hypothesis was only necessary to verify an equality (2.14.4)
regarding the compatibility between the local cohomology functor and
the extraordinary inverse image functor.  This compatibility was later
established by Caro unconditionally
\cite[(2.2.18.1)]{caro:overcoherent-arithmetic-d-modules}.

Beware, however, that Caro's construction of the local cohomology
complex \cite[\S2]{caro:overcoherent-arithmetic-d-modules} differs
from Berthelot's approach when \(Z\) is not a divisor.  Nevertheless,
as noted in
\cite[Remarque~2.2.7]{caro:overcoherent-arithmetic-d-modules}, the two
constructions coincide when the local cohomology is taken with respect
to modules of the form
\(\mathcal{O}_{\mathcal{P},\mathbb{Q}}({}^{\dagger}H)\).  Also, the
proof of
\cite[Theorem~2.14]{baldassarri-berthelot:dwork-cohomology-for-singular-hypersurfaces}
remains valid using Caro's version of the local cohomology functor.

We were informed by Steve Sperber that Nobuo Tsuzuki also prepared a
proof of Theorem~\ref{theorem:comparison} in 2011 (unpublished). We
appreciate Sperber for sharing Tsuzuki's manuscript with us.
\end{remark}

\begin{remark}
If in the definition of Dwork cohomology one uses finite type rings
instead of using weakly completed algebra or Banach algebras, one gets
the so-called ``algebraic Dwork cohomology''.  The algebraic analogue
of Theorem~\ref{theorem:comparison} is well-known: it is proved by
N.~Katz~\cite{katz:differential-equations-satisfied-by-period-matrices}
when \(Z\) is a hypersurface; by
Adolphson--Sperber~\cite[]{adolphson-sperber:dwork-cohomology-affine-complete-intersection}
when \(Z\) is a \emph{smooth} complete intersections in a smooth
affine variety.  The algebraic analogue of Theorem~\ref{theorem:local-comparison} was shown by
Dimca~et~al.~\cite{dimca-maaref-sabbah-saito:dwork-cohomology-and-d-modules}
and
Baldassarri--D'Agnolo~\cite{baldassarri-d-agnolo:dwork-cohomology-algebraic-d-modules}.

Beside the work of Baldassarri and Berthelot, there are many previous
works aiming to provide a comparison between rigid cohomology spaces
and the cohomology spaces (and their variants) constructed by Dwork
and by Adolphson and Sperber, such as
Berthelot~\cite{berthelot:rigid-cohomology-and-dwork-theory-exponential-sums}
(for exponential sums over a 1-dimensional torus),
P.~Bourgeois~\cite[]{bourgeois:vanishing-and-purity-of-rigid-cohomology-assocaited-to-exponential-sums}
(for Newton nondegenerate toric exponentials sums), and Peigen
Li~\cite[]{li_peigen:exponential-sums-and-rigid-cohomology} (for toric
exponential sums).
\end{remark}

Theorem~\ref{theorem:comparison} implies a result on cohomology with compact supports
by taking Poincaré duality.  Before stating it let us recall some
standard notation about Tate twists.

For an integer \(w\), let \(K(w)\) be the \(1\)-dimensional vector
space over \(K\) equipped with a \(\tau\)-semilinear map
\(x\mapsto p^{-w}\tau(x)\).  For a finite dimensional \(K\)-vector
space \(M\) equipped with a bijective \(\tau\)-semilinear map
\(\varphi\), we use \(M(w)\) to denote the tensor product
\(M\otimes_{K}K(w)\), whose \(\tau\)-semilinear map is twisted by
\(p^{-w}\).  For \((M,\varphi)\) as above, we endow its dual space
\(M^{\ast}\) the transpose inverse of \(\varphi\).  With this
convention, we have \((M(w))^{\ast}\simeq M^{\ast}(-w)\) as spaces
equipped with \(\tau\)-semilinear maps.

\begin{corollary}%
\label{corollary:compact-support-frobenius}
Let \(f_{1},\ldots,f_{r} \in \mathbb{F}_{q}[x_{1},\ldots,x_{n}]\) be polynomials in \(n\) variables.
Let \(Z\)
be the closed subscheme of \(\mathbb{A}^{n}\) cut out by \(f_{1},\ldots,f_{r}\).
Let \(g = \sum_{i=1}^{r}x_{n+i}f_{i} \in \mathbb{F}_{q}[x_{1},\ldots,x_{n+r}]\).
Then we  have an isomorphism of rigid cohomology spaces compatible with
Frobenius actions:
\begin{equation*}
\mathrm{H}^{n-r+j}_{\mathrm{rig},c}(Z)(n) \xrightarrow{\sim} [\mathrm{H}^{n+r-j}_{\mathrm{rig}}(\mathbb{A}^{n+r};g^{\ast}\mathcal{L}_{\pi})]^{\ast}
\end{equation*}
In other words, under the isomorphism, the Frobenius operator
\(q^{-n}\mathrm{Frob}_{q}|_{\mathrm{H}^{n-r+j}_{\mathrm{rig},c}(Z)}\)
corresponds to the transpose of the inverse of the Frobenius operator
\((\varphi^{(n+r-j)})^{a}\) on the overconvergent Dwork cohomology.
Moreover, we have
\begin{equation*}
\det(1 - t\mathrm{Frob}_{q}|\mathrm{H}^{n-r+j}_{\mathrm{rig},c}(Z))
= \det(1 -  q^{j-r}t\alpha^{(n+r-j)}_{a}| \mathrm{H}_{\mathrm{rig}}^{n+r-j}(\mathbb{A}^{n+r};g^{\ast}\mathcal{L}_{\pi})).
\end{equation*}
\end{corollary}

\begin{proof}
Recall the statement of Poincaré duality for rigid cohomology (see
\cite[Corollary~8.3.14]{le-stum:rigid-cohomology}, and
\cite[Theorem~1.2.3]{kedlaya:finiteness-rigid-cohomology}; note the
latter article ignores the Tate twist): for a nonsingular,
geometrically connected variety \(X\) of dimension \(N\), and any
closed subvariety \(Y\) of \(X\), we have a perfect pairing
\begin{equation*}
\mathrm{H}^{i}_{\mathrm{rig},Y}(X) \otimes_K \mathrm{H}^{2N-i}_{\mathrm{rig},c}(Y) \to K(-N).
\end{equation*}

Applying Poincaré duality and Theorem~\ref{theorem:comparison}, to
\(X = \mathbb{A}^{n}\) and \(Y=Z\), we find the dual space of
\(\mathrm{H}^{n-r+j}_{\mathrm{rig},c}(Z)(n)\) is isomorphic to
\(\mathrm{H}^{n+r-j}_{\mathrm{rig},Z}(\mathbb{A}^{n})\simeq\mathrm{H}^{n+r-j}_{\mathrm{rig}}(\mathbb{A}^{n+r};g^{\ast}\mathcal{L}_{\pi})\),
in a way that commutes with semilinear Frobenius actions.  This
implies that the \(a\textsuperscript{th}\) iterations of these
semilinear maps, which are linear, also match.

The \(a\textsuperscript{th}\) iteration of the semilinear Frobenius
operator on \(\mathrm{H}^{n-r+j}_{\mathrm{rig},c}(Z)(n)\) is the
linear Frobenius operator twisted by \(q^{-n}\), i.e., the map
\(q^{-n}\mathrm{Frob}_{q}\colon\mathrm{H}^{n-r+j}_{\mathrm{rig},c}(Z)\to\mathrm{H}^{n-r+j}_{\mathrm{rig},c}(Z)\).
By the above discussion, it corresponds to the transpose of the
inverse of the \(a\textsuperscript{th}\) iteration of the semilinear
Frobenius operator \(\varphi^{(n+r-j)}\) on the overconvergent Dwork
cohomology
\(\mathrm{H}^{n+r-j}_{\mathrm{rig}}(\mathbb{A}^{n+r};g^{\ast}\mathcal{L}_{\pi})\).
Since the characteristic polynomial of the transpose of a linear
operator is identical to that of the operator itself, we conclude:
\begin{align*}
  \det(1-t\mathrm{Frob}_{q}| \mathrm{H}^{n-r+j}_{\mathrm{rig},c}(Z))
  &= \det(1 - tq^{n}(\varphi^{(n+r-j)})^{-a}| \mathrm{H}^{n+r-j}_{\mathrm{rig}}(\mathbb{A}^{n+r};g^{\ast}\mathcal{L}_{\pi})) \\
  \text{[By \eqref{eq:dwork-vs-frob}]} &= \det(1 - tq^{j-r}\alpha_{a}|\mathrm{H}^{n+r-j}_{\mathrm{rig}}(\mathbb{A}^{n+r};g^{\ast}\mathcal{L}_{\pi})).
\end{align*}
This completes the proof.
\end{proof}

\section{Visibility of Frobenius eigenvalues}
\label{sec:visibility}
Let \(f_{1},\ldots,f_{r} \in \mathbb{F}_{q}[x_1,\ldots,x_n]\) be a
collection of polynomials.  For every subset
\(I \subset \{1,2,\ldots,r\}\), set
\[
Z_{I}= \operatorname{Spec}\mathbb{F}_q[x_1,\ldots,x_n]/(f_i : i\in I),
\]
and \(Z_{I}^{\ast} = Z_{I} \cap \mathbb{G}_{\mathrm{m}}^{n}\).
Write \(Z=Z_{\{1,2,\ldots,r\}}\).

The purpose of this section is to prove Theorem~\ref{theorem:cancellation}, that
is, we want to show that the Frobenius eigenvalues of \(Z\) are weakly visible
in the zeta functions \(\zeta_{Z_I^{\ast}}(t)\).

The idea of the proof of Theorem~\ref{theorem:cancellation} can be
explained as follows:
\begin{enumerate}[wide]
\item By Corollary~\ref{corollary:compact-support-frobenius}, the overconvergent Dwork
cohomology associated to \(Z\) computes the rigid cohomology of \(Z\).
Hence the Frobenius eigenvalues of \(Z\) can be manifestly computed
using the operator \(\alpha_{a}\).  This is our starting point of the proof
of Theorem \ref{theorem:cancellation}.

\item The chain level operators \(\alpha_{a}|_{B_I}\) are
``nuclear operators''.
Since cohomology spaces are subquotients of \(\bigoplus B_{I}\),
Monsky's spectral theory (see Lemmas~\ref{lemma:nuclear-factor},
\ref{lemma:factor}) implies that the cohomological eigenvalues
are also ``eigenvalues'' of \(\bigoplus\alpha_{a}|_{B_I}\).
In the latter context, ``eigenvalue'' should be interpreted as the
reciprocal roots of the Fredholm determinant of \(\alpha_{a}\).  Thus,
Frobenius eigenvalues are visible in the Fredholm determinants
\(\det(1-t\alpha_{a}| B_I)\).
\item The spaces \(B_{I}\) are all \(\alpha_{a}\)-invariant subspaces of \(B\).
Thus \(\det(1-t\alpha_{a}| B)\) witnesses
all the ``chain level eigenvalues'' of \(\alpha_{a}|_{B_I}\).  The
Dwork trace formula, applying to \(\alpha_{a}|_B\), equates an alternating
product of \(\det(1-t\cdot q^{m}\alpha_{a}| B)\) with an alternating
product of zeta functions (see
Theorem~\ref{theorem:dwrok-trace-formula} and
Lemma~\ref{lemma:from-l-star-to-zeta}).  A M\"obius inversion (see the
formulae in Definition~\ref{definition:delta-operator}) then allows us
to represent \(\det(1-t\alpha_{a}| B)\) as an infinite product of zeta
functions. The Frobenius eigenvalues, which are visible in the
Fredholm determinant, are thereby weakly visible in the zeta
functions.
\end{enumerate}

Let us carry out the details.
To show the Fredholm determinants \(\det(1-t\alpha_{a} | B_I)\) contain all the
information of Frobenius eigenvalues, we need the following lemma,
extracted from \cite[Theorem~1.4]{monsky:formal-cohomology-3}.

\begin{lemma}%
\label{lemma:nuclear-factor}
Let \(X\) be a variable.
Let \(M^{\prime} \xrightarrow{f} M \xrightarrow{g} M^{\prime\prime}\) be a complex of nuclear \(K[X]\)-modules
in the sense of \textup{\cite[Definition~1.4]{monsky:formal-cohomology-3}}.
Then \(H=\operatorname{Ker}g/\operatorname{Im}f\) is also nuclear, and
\(\det(1 - t \cdot X | H)\) is
a factor of \(\det(1-t\cdot X | M)\).
\end{lemma}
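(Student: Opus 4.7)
The plan is to reduce the statement to the standard multiplicativity of Fredholm determinants on short exact sequences of nuclear $K[X]$-modules, together with the stability of nuclearity under closed subobjects and quotients. The background I would invoke, all part of Serre's $p$-adic spectral theory and Monsky's adaptation thereof, consists of three facts: (i) a closed $K[X]$-submodule of a nuclear $K[X]$-module is nuclear; (ii) the quotient of a nuclear $K[X]$-module by a closed submodule is nuclear; (iii) if $0 \to M_1 \to M_2 \to M_3 \to 0$ is a short exact sequence of nuclear $K[X]$-modules with continuous morphisms, then
\[
  \det(1-tX \mid M_2) = \det(1-tX \mid M_1) \cdot \det(1-tX \mid M_3).
\]

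Granting these, I would argue as follows. Since $g$ is continuous and $K[X]$-linear, $\ker g$ is a closed $K[X]$-submodule of $M$, hence nuclear by (i). For $\operatorname{im} f$, the factorization $M' \twoheadrightarrow M'/\ker f \hookrightarrow \ker g$ realizes $\operatorname{im} f$ as the continuous bijective image of a nuclear $K[X]$-module; the crucial analytic input is that continuous $K[X]$-maps between nuclear modules automatically have closed image, because $X$ acts as a completely continuous operator and complete continuity forces compactness of the unit ball's image. Thus $\operatorname{im} f$ is closed in $\ker g$, the quotient $H = \ker g/\operatorname{im} f$ is nuclear by (ii), and we have two short exact sequences of nuclear $K[X]$-modules:
\[
  0 \to \ker g \to M \to M/\ker g \to 0, \qquad 0 \to \operatorname{im} f \to \ker g \to H \to 0.
\]

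Applying the multiplicativity (iii) twice then gives
\[
  \det(1-tX \mid M) = \det(1-tX \mid \operatorname{im} f) \cdot \det(1-tX \mid H) \cdot \det(1-tX \mid M/\ker g),
\]
so $\det(1-tX \mid H)$ divides $\det(1-tX \mid M)$ in the ring of $p$-adic entire functions, the quotient being the product of the other two Fredholm determinants (which is entire as a product of entire functions). This is the desired factorization.

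The main obstacle is the delicate functional-analytic claim that $\operatorname{im} f$ is closed and that nuclearity descends to it in Monsky's strong sense (not merely as an abstract Banach subspace, but with the induced $K[X]$-action remaining nuclear). I would follow Monsky's Theorem~1.4 in \cite{monsky:formal-cohomology-3} closely here: the key technical device is to represent the nuclear operator $X$ as a limit of finite-rank operators in such a way that the resulting filtration restricts compatibly to $\operatorname{im} f$ and descends to $H$, yielding both closedness and the determinant identity. The multiplicativity (iii), once the categorical setup is in place, is then a standard consequence of the additivity of characteristic power series under short exact sequences of completely continuous endomorphisms.
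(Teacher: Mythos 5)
Your high-level plan (break the problem into short exact sequences, then use multiplicativity of Fredholm determinants) differs from the paper's argument, which simply cites Monsky's Theorem 1.4 for the nuclearity of $H$, then uses a single finite-dimensional fact: $N(S,H)$ is a \emph{subquotient} of $N(S,M)$ for every bounded $S$, so $\det(1-tX\mid N(S,H))$ divides $\det(1-tX\mid N(S,M))$, and one takes the limit over $S$. This route needs no exactness discussion and no analysis of $\ker g$ or $\operatorname{im} f$ in their own right.

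Your version has a genuine gap at the ``closed image'' step. You assert that continuous $K[X]$-maps between nuclear modules have closed image ``because $X$ acts as a completely continuous operator and complete continuity forces compactness of the unit ball's image.'' This does not hold. First, the morphisms $f,g$ are the differentials of the complex; they are not completely continuous, and the fact that $X$ (the Frobenius-like operator) acts compactly says nothing about their ranges. Second, even a genuinely compact operator between infinite-dimensional Banach spaces does not in general have closed range---closed range plus compactness forces finite rank. So the reasoning offered does not produce the closedness you need, and it is easy to imagine $\operatorname{im} f$ failing to be topologically closed.

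More fundamentally, the topological framing is not the right one: Monsky's notion of nuclear $K[X]$-module (his Definition~1.4) is algebraic, resting on the existence of functorial finite-dimensional decompositions $M = N(S,M)\oplus F(S,M)$, not on a Banach or Fr\'echet structure. Because this decomposition is functorial under $K[X]$-morphisms, it passes automatically to kernels, images, and cohomology---no closed-range theorem is required or even meaningful here. That is precisely what the paper's citation of Monsky's Theorem~1.4 and the ``subquotient of $N(S,M)$'' observation exploit. You do acknowledge that you would ``follow Monsky's Theorem~1.4 closely here,'' but since that theorem already yields the nuclearity of $H$ and the divisibility of the determinants directly, routing through two short exact sequences and a separate multiplicativity statement adds complexity without removing any of the underlying technical burden. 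If you want to run the short-exact-sequence argument, replace the closed-image claim with the algebraic statement that the spectral decomposition restricts to $K[X]$-submodules and descends to quotients; that, and not any topological closedness, is what grants nuclearity to $\ker g$, $\operatorname{im} f$, and $H$.
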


\begin{proof}
We follow Monsky's notation in \cite[\S1]{monsky:formal-cohomology-3}.
By \cite[Theorem~1.4]{monsky:formal-cohomology-3}, \(H\) is a nuclear
\(K[X]\)-module.  It follows that for any bounded subset
(cf.~\cite[Definition 1.3]{monsky:formal-cohomology-3})
\(S \subset K[X]\setminus XK[X]\), we have a decomposition
\(H = N(S,H)\oplus F(S,H)\)
(\cite[Theorem~1.1]{monsky:formal-cohomology-3}).  By \cite[Proof of
Theorem~1.4, eighth line]{monsky:formal-cohomology-3}, \(N(S,H)\) is a
subquotient of \(N(S,M)\).  As both \(N(S,H)\) and \(N(S,M)\) are
finite dimensional, \(\det(1-tX | N(S,H))\) divides
\(\det(1-tX | N(S,M))\).  Taking limit by letting \(S\) run through
all bounded subsets, we conclude that \(\det(1-tX| H)\) is a factor
of \(\det(1-tX | M)\).
\end{proof}

\begin{lemma}%
\label{lemma:factor}
For each \(I \subset \{1,2,\ldots,n+r\}\), and each \(i \in \mathbb{Z}\),
\(\det(1-t\cdot q^{-i}\alpha_{a} | B_{I})\)
is a \(p\)-adic entire function. Moreover, the polynomial
\(\det(1-t\mathrm{Frob}_q| \mathrm{H}^{n-r+j}_{\mathrm{rig},c}(Z))\)
is a factor of
\[
\prod\limits_{|I|=n+r-j}\det(1-t\cdot q^{j-r}\alpha_{a} | B_{I}).
\]
\end{lemma}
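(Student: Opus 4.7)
The entireness assertion is immediate: by Remark~\ref{situation:nuclear}, \(\alpha|_{B_I}\) is nuclear on \(B_I\), so its Fredholm determinant is \(p\)-adic entire in \(t\), and rescaling the variable by \(q^{j-r}\) is harmless. For the factoring, I plan to translate the claim into a statement about the overconvergent Dwork cohomology of \(g=\sum_{i=1}^{r}x_{n+i}f_i\), and then apply Lemma~\ref{lemma:factor-again}.

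Take \(\overline{A}=\mathbb{F}_q[x_1,\ldots,x_n]\), rename each auxiliary variable \(T_i\) as \(x_{n+i}\), and combine Corollary~\ref{corollary:compact-support-frobenius} with Poincaré duality~\cite{le-stum:rigid-cohomology}*{Corollary~8.3.14} for \(\mathbb{A}^{n+r}_{\mathbb{F}_q}\) (using \((g^\ast\mathcal{L}_\pi)^{\vee}=g^\ast\mathcal{L}_{-\pi}\)) to obtain Frobenius-equivariant isomorphisms
\[
\mathrm{H}^{n-r+j}_{\mathrm{rig},c}(Z)(-r)\simeq\mathrm{H}^{n+r+j}_{\mathrm{rig},c}(\mathbb{A}^{n+r}_{\mathbb{F}_q},g^\ast\mathcal{L}_\pi)\simeq\bigl(\mathrm{H}^{n+r-j}_{\mathrm{rig}}(\mathbb{A}^{n+r}_{\mathbb{F}_q},g^\ast\mathcal{L}_{-\pi})(n+r)\bigr)^\ast.
\]
Chase Frobenius eigenvalues through this chain: the Tate twist \((-r)\) multiplies by \(q^r\), and Poincaré duality sends an \(F\)-eigenvalue \(\alpha\) to \(q^{n+r}/\alpha\). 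Hence, if \(\lambda\) is an \(F\)-eigenvalue on the leftmost space, then \(q^n/\lambda\) is one on \(\mathrm{H}^{n+r-j}_{\mathrm{rig}}(\mathbb{A}^{n+r}_{\mathbb{F}_q},g^\ast\mathcal{L}_{-\pi})\). In polynomial form,
\[
\det(1-tF\mid \mathrm{H}^{n-r+j}_{\mathrm{rig},c}(Z))=\det(1-q^{n}t\,F^{-1}\mid \mathrm{H}^{n+r-j}_{\mathrm{rig}}(\mathbb{A}^{n+r}_{\mathbb{F}_q},g^\ast\mathcal{L}_{-\pi})).
\]

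To conclude, Lemma~\ref{lemma:diff-presentation} presents \(\mathrm{H}^{n+r-j}_{\mathrm{rig}}(\mathbb{A}^{n+r}_{\mathbb{F}_q},g^\ast\mathcal{L}_{-\pi})\) as a cohomology group of the Artin--Hasse complex~\eqref{eq:overconv-de-rham}, on which \(F^{-1}\) acts as \(q^{-(n+r-j)}\sum_{|I|=n+r-j}\alpha|_{B_I}\). Lemma~\ref{lemma:factor-again}, applied with \(i=n+r-j\), then yields
\[
\det\!\bigl(1-tF^{-1}\mid \mathrm{H}^{n+r-j}_{\mathrm{rig}}(\mathbb{A}^{n+r}_{\mathbb{F}_q},g^\ast\mathcal{L}_{-\pi})\bigr)\ \Big|\ \prod_{|I|=n+r-j}\det\!\bigl(1-t\,q^{-(n+r-j)}\alpha\mid B_I\bigr).
\]
The substitution \(t\mapsto q^n t\) preserves divisibility of entire functions, collapses the exponent on the right from \(-(n+r-j)\) to \(j-r\), and converts the left side into \(\det(1-tF\mid \mathrm{H}^{n-r+j}_{\mathrm{rig},c}(Z))\) by the identity above. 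This yields the desired factoring. The main obstacle is purely bookkeeping---keeping the Tate twists, Poincaré duality, and powers of \(q\) straight; no new analytic input beyond the quoted lemmas is needed.
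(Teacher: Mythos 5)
Your proof is correct and follows essentially the same path as the paper: entireness via nuclearity, then combining Corollary~\ref{corollary:compact-support-frobenius} with Poincaré duality for \(\mathbb{A}^{n+r}_{\mathbb{F}_q}\) to identify \(\det(1-tF\mid\mathrm{H}^{n-r+j}_{\mathrm{rig},c}(Z))\) with a rescaled inverse-Frobenius determinant on \(\mathrm{H}^{n+r-j}_{\mathrm{rig}}(\mathbb{A}^{n+r}_{\mathbb{F}_q},g^{\ast}\mathcal{L}_{-\pi})\), and finishing with Lemma~\ref{lemma:factor-again}. Your identity \(\det(1-tF\mid\mathrm{H}^{n-r+j}_{\mathrm{rig},c}(Z))=\det(1-q^{n}tF^{-1}\mid\mathrm{H}^{n+r-j}_{\mathrm{rig}}(\mathbb{A}^{n+r}_{\mathbb{F}_q},g^{\ast}\mathcal{L}_{-\pi}))\) is exactly the paper's computation after the change of variable, and the power count \(q^{n}\cdot q^{-(n+r-j)}=q^{j-r}\) is correct.
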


\begin{proof}
All summands \(B_{I}\) of \eqref{eq:decompose-differential-forms} are
nuclear \(K[X]\)-modules, where the action of \(X\) coming from that
of \(\alpha_{a}\).  Since \(\alpha_{a}|_{B_{I}}\) is nuclear, the
series \(\det(1-t \cdot q^{-i}\alpha_{a} | B_I)\) is a \(p\)-adic
entire function by Monsky's theory.  Since
\(\Omega^{n+r-j} = \bigoplus_{|I|=n+r-j}B_{I} \frac{dx^{I}}{x^{I}}\),
\(\alpha^{(n+r-j)}_{a} = \bigoplus \alpha_{a}|_{B_{I}}\), and since
the overconvergent Dwork cohomology
\(\mathrm{H}^{n+r-j}(\mathbb{A}^{n+r};g^{\ast}\mathcal{L}_{\pi})\) is
computed as the cohomology of
\(\Omega^{n+r-j-1}\to\Omega^{n+r-j}\to\Omega^{n+r-j+1}\),
Corollary~\ref{corollary:compact-support-frobenius} and
Lemma~\ref{lemma:nuclear-factor} imply immediately that
\(\det(1-t\mathrm{Frob}_{q}|\mathrm{H}^{n-r+j}_{\mathrm{rig},c}(Z))\)
is a factor of
\(\prod\limits_{|I|=n+r-j}\det(1-t\cdot q^{j-r}\alpha_{a}|B_{I})\).
\end{proof}

\begin{corollary}%
\label{corollary:visibility}
Any Frobenius eigenvalue of \(\mathrm{H}^{\ast}_{\mathrm{rig},c}(Z)\) is weakly
visible in \(\det(1-t\alpha_{a}|B)\).
\end{corollary}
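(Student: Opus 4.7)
The plan is to combine Lemma~\ref{lemma:factor}, which witnesses cohomological Frobenius eigenvalues inside the Fredholm determinants on the subspaces $B_I$, with a divisibility statement that relates each $\det(1-t\alpha\mid B_I)$ to the ambient $\det(1-t\alpha\mid B)$.

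First I would fix a Frobenius eigenvalue $\lambda$ of $\mathrm{H}^{n-r+j}_{\mathrm{rig},c}(Z)$ for some $j$. By Lemma~\ref{lemma:factor} there exists an $I\subset\{1,\ldots,n+r\}$ of size $n+r-j$ such that $\lambda^{-1}$ is a root of $\det(1-t\cdot q^{j-r}\alpha\mid B_I)$; a change of variable $t\mapsto q^{j-r}t$ rephrases this as: $(q^{r-j}\lambda)^{-1}$ is a root of the Fredholm determinant $\det(1-t\alpha\mid B_I)$.

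Next I would show $\det(1-t\alpha\mid B_I)$ is a factor of $\det(1-t\alpha\mid B)$ via two applications of Lemma~\ref{lemma:nuclear-factor}, using that $B_I$ is a closed $\alpha$-stable subspace of the nuclear $K[X]$-module $B$ (which was already invoked in the proof of Lemma~\ref{lemma:factor-again}). First apply the lemma to the complex $B_I\to B\to 0$, with the first arrow the inclusion and the second arrow zero: the middle cohomology is $B/B_I$, so the lemma yields that $B/B_I$ is nuclear and that $\det(1-t\alpha\mid B/B_I)$ is a factor of $\det(1-t\alpha\mid B)$. Now that all three terms of the complex $0\to B\to B/B_I$ (with the second arrow the quotient map) are nuclear, a second application of Lemma~\ref{lemma:nuclear-factor} to this complex, whose middle cohomology is $B_I$, shows that $\det(1-t\alpha\mid B_I)$ is a factor of $\det(1-t\alpha\mid B)$.

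Combining the two steps, $(q^{r-j}\lambda)^{-1}$ is a root of $\det(1-t\alpha\mid B)$; by the definition of (weak) visibility recalled in the introduction, this exhibits $q^{r-j}\lambda$ as visible in $\det(1-t\alpha\mid B)$, and since $r-j\in\mathbb{Z}$, $\lambda$ is weakly visible there. The only subtle point is the two-stage use of Lemma~\ref{lemma:nuclear-factor}: a single application presents a cohomology as a Fredholm divisor of a member, but not a \emph{submodule} as such — one first has to bootstrap by promoting the quotient $B/B_I$ to a nuclear $K[X]$-module. All the genuinely difficult content (the cohomological eigenvalue bookkeeping and the Monsky nuclearity theory) is already packaged in Lemma~\ref{lemma:factor} and Lemma~\ref{lemma:nuclear-factor}.
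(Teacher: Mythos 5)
Your proof is correct and follows the same strategy as the paper: combine Lemma~\ref{lemma:factor} with the fact that each $B_I$ is a nuclear $K[X]$-submodule of $B$, so that $\det(1-t\alpha\mid B_I)$ divides $\det(1-t\alpha\mid B)$. The paper states this last divisibility in one clause; your two-stage bootstrap of Lemma~\ref{lemma:nuclear-factor} (first producing the nuclearity of $B/B_I$ from the complex $B_I\to B\to 0$, then extracting $B_I$ as the middle cohomology of $0\to B\to B/B_I$) is exactly the detail needed to justify that claim, since the lemma as stated only witnesses a cohomology as a Fredholm divisor of the middle term and not a submodule per se.
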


\begin{proof}
By Lemma~\ref{lemma:factor}, Frobenius eigenvalues of
\(\mathrm{H}^{n-r+j}_{\mathrm{rig},c}(Z)\) are weakly visible in the
product
\[\prod\limits_{|I|=j}\det(1-t\alpha_{a}|B_{I}).\]
The corollary follows since \(B_{I}\) is a nuclear submodule of \(B\).
\end{proof}

Next, let us explain how to read off the Fredholm determinants from the zeta
functions.

\begin{definition}\label{definition:delta-operator}%
  Following Dwork, we introduce an operation
  \(\delta\) on the set \(1 + t \mathbb{C}_{p}[\![t]\!]\)
  of formal power series with constant term one:
  \begin{equation*}
    \delta(\Gamma(t)) \overset{\mathrm{def}}{=\!=} \frac{\Gamma(t)}{\Gamma(qt)}.
  \end{equation*}
  The endomorphism \(\delta\) is invertible, and its inverse reads
  \begin{equation*}
    \delta^{-1}(\Gamma(t)) = \prod_{i=0}^{\infty} \Gamma(q^{i}t).
  \end{equation*}
\end{definition}

\begin{lemma}\label{lemma:entireness}
  If \(\Gamma(t) \in 1 + t \mathbb{C}_p[\![t]\!]\) is an entire function,
  then \(\delta^{-1}\Gamma(t)\) is also an entire function.
\end{lemma}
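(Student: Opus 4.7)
The plan is to use the $p$-adic Hadamard/Weierstrass factorization of entire functions on $\mathbb{C}_p$ and observe that the product operation $\delta^{-1}$ preserves the growth condition on reciprocal roots because $|q|<1$.

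First, I factor $\Gamma$. Since $\Gamma(t)\in 1+t\mathbb{C}_p[\![t]\!]$ is $p$-adically entire, there is a convergent factorization
\begin{equation*}
  \Gamma(t) = \prod_{j\geq 1}(1-\lambda_{j}t), \qquad \lambda_j \in \mathbb{C}_p,
\end{equation*}
where the reciprocal roots $\lambda_j$ are listed with multiplicity and satisfy $|\lambda_j|\to 0$ as $j\to\infty$; this is the standard $p$-adic analogue of Hadamard factorization (or, equivalently, is read off from the Newton polygon of $\Gamma$).

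Next, I expand $\delta^{-1}\Gamma$ formally. Substituting $q^{i}t$ for $t$ in the factorization and taking the defining product of $\delta^{-1}$ gives
\begin{equation*}
  \delta^{-1}\Gamma(t) = \prod_{i\geq 0}\Gamma(q^{i}t) = \prod_{i\geq 0}\prod_{j\geq 1}(1-\mu_{i,j}t), \qquad \mu_{i,j} := \lambda_{j}q^{i}.
\end{equation*}
So the candidate reciprocal-root sequence for $\delta^{-1}\Gamma$ is the double-indexed family $\{\mu_{i,j}:i\geq 0,\ j\geq 1\}$.

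The key step is to verify the $p$-adic growth condition on this family: for every $\epsilon>0$, only finitely many pairs $(i,j)$ satisfy $|\mu_{i,j}|\geq\epsilon$. This uses crucially that $|q|=p^{-a}<1$ together with $|\lambda_j|\to 0$: given $\epsilon>0$, pick $J$ with $|\lambda_j|<\epsilon$ for all $j>J$, and pick $I$ with $|q|^{I}\cdot\max_{j\leq J}|\lambda_j|<\epsilon$; then any pair with $|\mu_{i,j}|\geq\epsilon$ must satisfy $j\leq J$ and $i<I$, which is a finite set. By the converse direction of Weierstrass factorization in $\mathbb{C}_p$, any product $\prod_{k}(1-\mu_{k}t)$ whose reciprocal roots go to zero defines a $p$-adic entire function; applying this to $\{\mu_{i,j}\}$ yields that $\delta^{-1}\Gamma(t)$ is entire.

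The main (minor) obstacle is bookkeeping: one must justify that the double product can be regrouped into a single product over a countable index set, and that both the formal power series convergence in $\mathbb{C}_p[\![t]\!]$ and the $p$-adic entireness are preserved. This is harmless because, at each fixed $n$, the coefficient of $t^{n}$ is a convergent sum of elementary symmetric functions of the $\mu_{i,j}$, and the above finiteness condition gives a uniform bound on how many factors can contribute non-negligibly. Aside from this bookkeeping, the argument is a one-line consequence of $|q|<1$.
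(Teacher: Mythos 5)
Your proof is correct and follows the same route as the paper's: factor $\Gamma$ via the $p$-adic Weierstrass factorization, write $\delta^{-1}\Gamma$ as the double product $\prod_{i,j}(1-q^i\lambda_j t)$, and note that the reciprocal roots $q^i\lambda_j$ still tend to zero. The paper states this tersely; you have simply supplied the $\epsilon$-argument and the convergence bookkeeping that the paper leaves implicit.
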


\begin{proof}
Let \(\Gamma(t) = \prod_{j=1}^{\infty}(1-\gamma_{i}t)\) be its infinite
product expansion, where the reciprocal zero $\gamma_j$ approaches zero as $j$ goes to infinity.
Then
\[\delta^{-1}(\Gamma(t)) = \prod_{i=0}^{\infty} \prod_{j=1}^{\infty} (1-q^i\gamma_jt)\]
is also such an infinity product whose reciprocal zero approaches zero.
\end{proof}

\begin{lemma}\label{lemma:moebius}
  Assume that \(\Gamma \in 1 +t \mathbb{C}_{p}[\![t]\!]\) is a \(p\)-adic meromorphic function on
  \(\mathbb{C}_{p}\), and \(\lambda \in \mathbb{C}_p\) is weakly visible in \(\Gamma\). Then
  \(\lambda\) is weakly visible in \(\delta(\Gamma)\).
\end{lemma}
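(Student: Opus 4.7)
The plan is to track the signed multiplicity of $\Gamma$ along the Tate orbit $\{q^m\lambda : m \in \mathbb{Z}\}$ of $\lambda$ and exploit a simple telescoping identity for $\delta$. For each $m \in \mathbb{Z}$, I would let $n(m) \in \mathbb{Z}$ denote the order of $\Gamma$ at the point $s = (q^m\lambda)^{-1}$, counted positively for zeros of $\Gamma$ and negatively for poles; this is well-defined because $\Gamma$ is a ratio of $p$-adic entire functions. The hypothesis that $\lambda$ is weakly visible in $\Gamma$ translates precisely into the statement $n \not\equiv 0$.

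Next I would compute the analogous function $n'(m)$ for $\delta(\Gamma)(s) = \Gamma(s)/\Gamma(qs)$. Since the order of $s \mapsto \Gamma(qs)$ at $s_0$ equals the order of $\Gamma$ at $qs_0$, and $q(q^m\lambda)^{-1} = (q^{m-1}\lambda)^{-1}$, one obtains
\[
n'(m) = n(m) - n(m-1).
\]
I then need that the support $\{m : n(m) \neq 0\}$ is bounded below. This holds because the reciprocal zeros and poles of $\Gamma$ accumulate only at $0$, by the usual Newton-polygon / Weierstrass-preparation analysis of $p$-adic entire functions; on the other hand $|q^m\lambda| = q^{-m}|\lambda|$ tends to $\infty$ as $m \to -\infty$, so $n(m) = 0$ for all sufficiently negative $m$.

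Finally, letting $m_0 = \min\{m : n(m) \neq 0\}$, which is a finite integer by the weak-visibility hypothesis combined with the previous paragraph, the displayed formula yields
\[
n'(m_0) = n(m_0) - n(m_0 - 1) = n(m_0) \neq 0.
\]
Therefore $q^{m_0}\lambda$ is a reciprocal zero or pole of $\delta(\Gamma)$, which says exactly that $\lambda$ is weakly visible in $\delta(\Gamma)$. The only technical point requiring care is the ``bounded below'' statement for the support of $n$, but this is a standard consequence of the convergence properties of $p$-adic entire functions, so I do not anticipate a real obstacle.
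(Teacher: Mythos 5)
Your proof is correct, and it takes a genuinely different route from the paper's. The paper argues ``backwards'': it writes \(Z=\delta(\Gamma)\), uses the explicit infinite-product formula \(\Gamma=\delta^{-1}(Z)=\prod_{i\ge 0}Z(q^it)\), factors \(Z=u/v\) with \(u,v\) entire and coprime, invokes Lemma~\ref{lemma:entireness} to know \(\delta^{-1}u\) and \(\delta^{-1}v\) are entire, and then extracts a factor \(1-q^m\lambda t\) from the product \(\prod_i u(q^it)\). You instead encode everything in the signed-multiplicity sequence \(n(m)=\operatorname{ord}_{(q^m\lambda)^{-1}}\Gamma\) along the Tate orbit, observe that \(\delta\) acts on this sequence as the first-difference operator \(n'(m)=n(m)-n(m-1)\), and conclude by noting that the support of \(n\) is bounded below (because the zeros and poles of a \(p\)-adic meromorphic function of the given form have no finite accumulation point, while \(|(q^m\lambda)^{-1}|\to 0\) as \(m\to-\infty\)). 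Both arguments ultimately rest on the same Weierstrass-type structure of \(p\)-adic entire functions, but yours bypasses the infinite-product manipulation entirely; the first-difference viewpoint makes the mechanism transparent and would also immediately give the slightly sharper statement that if \(\lambda\) is weakly visible in \(\Gamma\), then the \emph{minimal} \(m\) with \(q^m\lambda\) visible in \(\Gamma\) is also the minimal \(m\) with \(q^m\lambda\) visible in \(\delta(\Gamma)\), with the same signed multiplicity there. One minor point worth making explicit in a write-up: to speak of \(n(m)\) as a well-defined integer (order of a zero or pole) you should, as the paper does, fix a representation \(\Gamma=u/v\) with \(u,v\) entire and without common zeros; then \(n(m)=\operatorname{ord}_{(q^m\lambda)^{-1}}u-\operatorname{ord}_{(q^m\lambda)^{-1}}v\), and weak visibility of \(\lambda\) in \(\Gamma\) is precisely \(n\not\equiv 0\).
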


\begin{proof}
Let \(Z(t) = \delta(\Gamma)\).  Then
\(\Gamma(t) = \prod_{i=0}^{\infty}Z(q^{i}t)\).  Write
\(Z(t) = u(t)/v(t)\), where \(u,v\) are entire functions, without
common zeros.  Then
\begin{equation*}
\Gamma(t) = \prod_{i=0}^{\infty} \frac{u(q^i t)}{v(q^i t)} = \frac{\delta^{-1}(u(t))}{\delta^{-1}(v(t))}.
\end{equation*}
By Lemma~\ref{lemma:entireness}, \(\delta^{-1}u(t)\) and
\(\delta^{-1}v(t)\) are entire.  If \(q^{m}\lambda\) is a reciprocal
zero of \(\Gamma(t)\), then \(1-q^{m}\lambda t\) must be a factor of
the infinite product \(\delta^{-1}(u(t))=\prod_{i}u(q^{i}t)\).  Hence
\(q^{m}\lambda\) is a reciprocal zero of \(u(q^{i}t)\) for some \(i\),
i.e., \(q^{m-i}\lambda\) is a reciprocal zero of \(Z(t)\).  Thus
\(\lambda\) is weakly visible in \(Z(t)\).  The polar case is similar.
\end{proof}

At this point, we recall the Dwork trace formula \cite[Lemma~2,
p.~637]{dwork:rationality-zeta-function}.
Remember that for each choice of \(\pi\),
the value of the overconvergent function \(\theta(t) = \exp(\pi t - \pi t^{p})\)
at \(t = 1\) is a primitive \(p\)\textsuperscript{th} root of unity, and
the function
\begin{equation}\label{eq:additive-character}
\Psi(t) = \theta(1)^{\mathrm{Tr}_{\mathbb{F}_{q}/\mathbb{F}_{p}}(t)}
\end{equation}
is a nontrivial additive character on \(\mathbb{F}_{q}\).

\begin{theorem}[Dwork trace formula]%
\label{theorem:dwrok-trace-formula}
For each positive integer $m$, define the $m$\textsuperscript{th} toric exponential sum
\[
S^{\ast}_m(g)=\sum\limits_{x\in\mathbb{G}_{\mathrm{m}}^{n+r}(\mathbb{F}_{q^m})}(\Psi\circ\operatorname{Tr}_{\mathbb{F}_{q^m}/\mathbb{F}_q})(g(x)).
\]
Then
\begin{equation*}
L^{*}(t)\overset{\mathrm{def}}{=\!=}\exp\left\{ \sum_{m=1}^{\infty} S_{m}^{\ast}(g)\frac{t^{m}}{m} \right\}
= \left\{ \delta^{n+r}\big(\det(1 - t\alpha_{a}| B)\big) \right\}^{(-1)^{n+r-1}}.
\end{equation*}
\end{theorem}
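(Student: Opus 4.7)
The plan is to invoke the Dwork--Monsky trace formula at the level of traces of powers of $\alpha$, and then convert it to the stated $\delta$-formulation by a binomial manipulation. Set $N = n+r$, the number of coordinates on the ambient polydisk where $B$ is defined.

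\emph{Step 1: pointwise trace identity.} First I would establish that
\[
  (q^m - 1)^{N}\operatorname{Tr}\bigl(\alpha^m\mid B\bigr) = S^*_m(g), \qquad m \geq 1.
\]
This is Dwork's original trace formula, extended to the overconvergent algebra $B$ by Monsky~\cite{monsky:formal-cohomology-3}*{Theorem~5.3}, which I would invoke directly. Concretely: decomposing $\alpha_0 = \sigma^{-1}\circ\psi\circ G$ and using the nuclearity recorded in Remark~\ref{situation:nuclear}, the trace of $\alpha_0^{am}$ in the monomial basis collapses to a sum over Teichm\"uller representatives of roots of unity in the closed polydisk. Points with any vanishing coordinate contribute zero; on the remaining points, the product $\prod_{i=0}^{am-1}G(x^{p^i})$ simplifies to $\Psi\circ\operatorname{Tr}_{\mathbb{F}_{q^m}/\mathbb{F}_p}(g(\overline{x}))$ via the defining property of the Artin--Hasse exponential $\vartheta$, while the $(q^m-1)^{-N}$ normalization reflects the ratio of Teichm\"uller lifts to torus points.

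\emph{Step 2: passage to Fredholm determinants.} Expanding $(q^m-1)^N = \sum_{k=0}^{N}(-1)^{N-k}\binom{N}{k}q^{mk}$ by the binomial theorem and applying the standard relation $\log\det(1-t\alpha\mid B)^{-1} = \sum_{m\geq 1}\operatorname{Tr}(\alpha^m\mid B)\,t^m/m$, one obtains
\[
  L^*(t) = \prod_{k=0}^{N}\det\!\bigl(1-q^kt\alpha\mid B\bigr)^{(-1)^{N-k+1}\binom{N}{k}}.
\]

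\emph{Step 3: identification with $\delta^N$.} Taking logarithms, $\log\delta\Gamma(t) = \log\Gamma(t) - \log\Gamma(qt)$ acts as the discrete difference operator $I-\tau_q$, where $\tau_q$ denotes the substitution $t\mapsto qt$. Iterating yields
\[
  \delta^N\Gamma(t) = \prod_{k=0}^{N}\Gamma(q^kt)^{(-1)^k\binom{N}{k}}.
\]
Applied to $\Gamma=\det(1-t\alpha\mid B)$ and compared with Step~2 -- noting $(-1)^{N-k+1} = (-1)^{N-1}(-1)^k$ -- this gives the desired formula $L^*(t) = \bigl(\delta^N\det(1-t\alpha\mid B)\bigr)^{(-1)^{N-1}}$.

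The main obstacle lies entirely in Step~1: making Dwork's fixed-point trace computation rigorous on the overconvergent algebra $B$ rather than on the Banach completion of the whole polydisk. This is precisely what Monsky's theorem provides, using the nuclearity of $\alpha$ and the overconvergence bounds on $G$ and $\vartheta$ furnished by the radius estimates for the Artin--Hasse exponential; once these are in hand, Steps~2 and~3 are purely combinatorial.
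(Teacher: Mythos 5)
Your proof is correct and follows the standard route: the trace identity $(q^m-1)^{n+r}\operatorname{Tr}(\alpha^m\mid B)=S^*_m(g)$, taken from Monsky, followed by a binomial expansion and the identification of $(I-\tau_q)^N$ with $\delta^N$. The paper itself offers no proof of this theorem, only the citation to Monsky~\cite{monsky:formal-cohomology-3}*{Theorem~5.3}, so your Steps~2 and~3 make the passage to the $\delta$-formulation explicit in a way the paper leaves to the reader. The combinatorics check out: $(q^m-1)^N=\sum_k(-1)^{N-k}\binom{N}{k}q^{mk}$ and $\delta^N\Gamma=\prod_k\Gamma(q^kt)^{(-1)^k\binom{N}{k}}$ combine, after the sign reshuffle $(-1)^{N-k+1}=(-1)^{N-1}(-1)^k$, to exactly the stated alternating product. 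One discrepancy worth flagging: your derivation gives $\delta^N$ with exponent $(-1)^{N-1}$ where $N=n+r$ is the number of variables underlying $B$, whereas the theorem as printed writes $\delta^n$ and $(-1)^{n-1}$. Given that the paper's own invocation in the proof of Lemma~\ref{lemma:last-assertion} uses the factor $(q^m-1)^{N-|J|}$ (with $N=n+r$), the printed $n$ is almost certainly a slip for $n+r$, and your version is the correct one.
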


\begin{proof}
Let \(C = K\langle x^{\pm 1}_{1},\ldots,x_{n+r}^{\pm 1} \rangle^{\dagger}\)
be the weak completion of \(\mathcal{O}_{K}[x_{1}^{\pm 1},\ldots,x_{n+r}^{\pm 1}]\) with \(\pi\) inverted.
Then the rigid cohomology \(\mathrm{H}^{\ast}(\mathbb{G}_{\mathrm{m}}^{n+r};g^{\ast}\mathcal{L}_{\pi})\)
is computed by the exponentially twisted de~Rham complex
\begin{equation*}
C \to \bigoplus_{|I| = 1} C \frac{dx^{I}}{x^{I}} \to \cdots \to \bigoplus_{|I|=m} C \frac{dx^{I}}{x^{I}} \to \cdots,
\end{equation*}
whose differentials are given by \(d+\pi dG\) as in
\eqref{eq:dwork-complex}, and it is equipped with Dwork operators
\(q^{-m}\alpha_{a}^{(m)}\) as in Construction~\ref{construction:dwork-operator}.  By
\cite[Théorème~6.3II]{etesse-le-stum:l-functions-associated-to-overconvergent-f-isocrystals-1}
(noticing that \(\theta^{i}\) there is our \(q^{-i}\alpha_{a}^{(i)}\)),
the L-function for the isocrystal
\(g^{\ast}\mathcal{L}_{\pi}\)---which is \(L^{\ast}(t)\) in our
case---equals
\[
\prod_{i=0}^{n+r}\det(1 - t q^{n+r-i}\alpha_{a}^{(i)} |
\mathrm{H}^{i}_{\mathrm{rig}}(\mathbb{G}_{\mathrm{m}}^{n+r};g^{\ast}\mathcal{L}_{\pi}))^{(-1)^{i+1}}
\]
(note that \(\mathrm{H}^{i}_{\mathrm{rig}}(\mathbb{G}_{\mathrm{m}}^{n+r},g^{\ast}\mathcal{L}_{\pi})=0\) for \(i > n+r\)).
Since \(\alpha_{a}^{(m)}\) are nuclear operators,
the above quantity can also be written as
\begin{equation}
\label{eq:trace-formula}
\prod_{i=0}^{n+r}\det(1 - tq^{n+r-i}\alpha_{a}| C^{\binom{n+r}{i}})^{(-1)^{i+1}} =
\left\{\delta^{n+r}\left( \det( 1 - t\alpha_{a} | C) \right)\right\}^{(-1)^{n+r-1}}.
\end{equation}

If we can substitute \(C\) with \(B\) in \eqref{eq:trace-formula}, we
can readily derive the desired result.  It suffices to show that
\(\det(1 - t\alpha_{a} | C/B) = 1\).  Recall the formula
\(\det(1 - t \gamma) = \exp\left( -\sum_{m=1}^{\infty}
\frac{\mathrm{Tr}(\gamma^m){m}}t^m \right)\).  We only have to show
\(\mathrm{Tr}(\alpha_{a}^{m}| C/B)=0\) for all positive integers
\(m\).  A ``basis'' for \(C/B\) is comprised by the images of
\(x^{u}\) in \(C/B\), where
\(u \in \mathbb{Z}^{n+r} \setminus \mathbb{N}^{n+r}\).  In particular,
these \(u\) are never zero.  Recall that
\(\alpha_{a} = \psi^{a} \circ F_{a}\) (See
\eqref{eq:frobenius-function},
Construction~\ref{construction:dwork-operator}).  Because
\(F_{a} \in B\), the monomials in its power series expansion are
\(x^{v}\) with \(v \in \mathbb{N}^{n+r}\).  Thus we have
\begin{equation*}
\alpha_{a}^{m}(x^{u}) = \sum_{v} b_{v} x^{(u+v)/q^{m}},
\end{equation*}
in which \(b_{v}=0\) if \(q^{m}\nmid u+v\), or if
\(v \notin \mathbb{N}^{n+r}\).  To compute the trace of
\(\alpha_{a}^{m}|_{C/B}\), we need to examine the coefficient of \(x^{u}\)
in \(\alpha_{a}^{m}(x^{u})\) for \(u \notin \mathbb{N}^{n+r}\).  If
\(u = (u+v)/q^{m}\), then we have \(v = (q^{m}-1)u\) in
\(\mathbb{Z}^{n+r}\).  Since \(u \notin \mathbb{N}^{n+r}\), we have
\(v \notin \mathbb{N}^{n+r}\).  Therefore, if \(u = (u+v)/q^{m}\), we
must have \(b_{v}=0\).  From this, we conclude that the diagonal
entries of the matrix representation of the operator
\(\alpha_{a}^{m}|_{C/B}\) on the quotient with respect to the ``basis''
\begin{equation*}
\{\text{The image of }x^{u}\text{ in }C/B : {u\in \mathbb{Z}^{n+r}\setminus\mathbb{N}^{n+r}}\}
\end{equation*}
are all zero.  The theorem follows.
\end{proof}

Combining Corollary~\ref{corollary:visibility}, Lemma~\ref{lemma:moebius}, and
Theorem~\ref{theorem:dwrok-trace-formula}, we infer that any Frobenius
eigenvalue of \(\mathrm{H}^{\bullet}_{\mathrm{rig},c}(Z)\) is weakly visible in
\(L^{\ast}(t)\).

Our next objective is to relate the function \(L^{\ast}(t)\)
to the zeta functions of \(Z_{I}\).




\begin{lemma}%
  \label{lemma:from-l-star-to-zeta}
  We have
  \begin{equation*}
    L^*(t) = \prod_{J\subset \{1,2,\ldots,r\}}
    \zeta_{Z^*_{J}}(q^{|J|}t)^{(-1)^{r-|J|}}.
  \end{equation*}
\end{lemma}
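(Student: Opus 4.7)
The plan is to expand $S_m^{\ast}(g)$ directly, using the bilinear shape $g(x,T)=\sum_{i=1}^{r} T_i f_i(x)$ (after renaming $x_{n+i}=T_i$), and to read off $L^{\ast}(t)$ from the resulting inclusion--exclusion.

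First, I split the toric sum according to the $(x,T)$-decomposition:
\begin{equation*}
  S_m^{\ast}(g)
  = \sum_{x\in (\mathbb{F}_{q^m}^{\ast})^{n}}\ \sum_{T\in (\mathbb{F}_{q^m}^{\ast})^{r}}
  \Psi_m\!\Bigl(\sum_{i=1}^{r} T_i f_i(x)\Bigr),
\end{equation*}
where $\Psi_m=\Psi\circ\operatorname{Tr}_{\mathbb{F}_{q^m}/\mathbb{F}_q}$. The inner sum factors over the $r$ coordinates of $T$, and for each $i$ the one-variable Gauss-type sum evaluates to
\begin{equation*}
  \sum_{T_i\in\mathbb{F}_{q^m}^{\ast}}\Psi_m(T_i f_i(x))
  =\begin{cases} q^m-1 & \text{if }f_i(x)=0,\\ -1 & \text{if }f_i(x)\neq 0,\end{cases}
\end{equation*}
the second case being the usual vanishing of a full character sum minus the $T_i=0$ term. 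Thus for each $x$ the $T$-sum equals $\prod_{i=1}^{r}\bigl(q^m\cdot \mathbf{1}_{f_i(x)=0}-1\bigr)$.

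Next, I expand the product by inclusion--exclusion to reveal the vanishing loci $Z_J^{\ast}$:
\begin{equation*}
  \prod_{i=1}^{r}\bigl(q^m\cdot\mathbf{1}_{f_i(x)=0}-1\bigr)
  = \sum_{J\subset\{1,\ldots,r\}} (-1)^{r-|J|}\, q^{m|J|}\, \prod_{i\in J}\mathbf{1}_{f_i(x)=0},
\end{equation*}
and summing over $x\in (\mathbb{F}_{q^m}^{\ast})^{n}$ yields
\begin{equation*}
  S_m^{\ast}(g) = \sum_{J\subset\{1,\ldots,r\}} (-1)^{r-|J|}\, q^{m|J|}\, |Z_J^{\ast}(\mathbb{F}_{q^m})|.
\end{equation*}

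Finally, I insert this into the generating function $L^{\ast}(t)=\exp\{\sum_{m\geq 1} S_m^{\ast}(g)\, t^m/m\}$ and interchange the two sums; for each $J$ the resulting inner series is precisely $\sum_{m}|Z_J^{\ast}(\mathbb{F}_{q^m})|(q^{|J|}t)^m/m$, so exponentiating gives the factor $\zeta_{Z_J^{\ast}}(q^{|J|}t)^{(-1)^{r-|J|}}$, and the product over $J$ is the claimed identity. This is essentially just careful bookkeeping; the only step that requires a moment's thought is the character-sum evaluation in the first display, which is a routine consequence of $\Psi_m$ being a nontrivial additive character of $\mathbb{F}_{q^m}$.
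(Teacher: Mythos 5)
Your proof is correct. The key intermediate identity
\begin{equation*}
  S_m^{\ast}(g) = \sum_{J\subset\{1,\ldots,r\}} (-1)^{r-|J|}\, q^{m|J|}\, |Z_J^{\ast}(\mathbb{F}_{q^m})|
\end{equation*}
is exactly what the paper derives, but you reach it by a somewhat different route. The paper first defines, for each $J$, the sum $S_J$ over $x\in\mathbb{G}_{\mathrm{m}}^n$ and $y$ ranging over the \emph{full affine} coordinate subspace $\mathbb{A}^J$ (not the torus), evaluates it by orthogonality as $q^{|J|}|Z_J^{\ast}|$, and then converts the toric $T$-sum into an alternating sum of these affine sums by invoking the semisimplicial resolution of the coordinate hyperplane divisor $D\subset\mathbb{A}^r$ and inclusion--exclusion. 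You instead stay on the torus throughout: you factor the inner $T$-sum over $\mathbb{G}_{\mathrm{m}}^r$ coordinate-by-coordinate, evaluate each one-variable Gauss-type sum as $q^m-1$ or $-1$, and then expand the resulting product $\prod_i(q^m\mathbf{1}_{f_i(x)=0}-1)$ by the distributive law. The combinatorics is the same (both are the identity $\prod_j(1-\mathbf{1}_{y_j=0})=\sum_J(-1)^{r-|J|}\mathbf{1}_{\mathbb{A}^J}$ in disguise), but your version avoids the geometric language of simplicial resolutions and is arguably a bit more self-contained and direct. The final exponentiation step is handled the same way in both.

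One tiny point worth flagging, though it does not affect correctness: after you have the formula for $S_m^{\ast}(g)$, the interchange of the $\sum_J$ and $\sum_m$ inside the exponential is a formal manipulation of power series in $t$ (everything converges $t$-adically), so no analytic care is needed --- it would be slightly cleaner to say so rather than ``interchange the two sums,'' which can suggest a convergence issue that is not actually present.
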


\begin{proof}
  For any subset \(J\) of \(\{1,2,\ldots,r\}\),
  let \(\mathbb{A}^{J}\) denote the affine space with coordinates
  \((y_{j})_{j\in J}\), and \(g_{J}\) the regular function
  \(\sum_{j\in J}y_{j}f_{j}(x)\).
  The orthogonality of characters implies
  \[
    S_{J}
    \overset{\text{def}}{=\!=}
    \sum\limits_{x\in \mathbb{G}_{\mathrm{m}}^{n}(\mathbb{F}_q)}\sum_{y\in\mathbb{A}^{J}(\mathbb{F}_q)}
    \Psi(g_{J}(x,y)) =q^{|J|}|Z^*_{J}(\mathbb{F}_q)| \]
Since \(\mathbb{A}^{n} = \mathbb{G}_{\mathrm{m}}^{n} \sqcup D\), where \(D\) is
  the union of all coordinate hyperplanes, and since \(D\) admits the following
  standard semisimplicial resolution
  \begin{equation*}
    \begin{tikzcd}
      \cdots \ar[r,shift left=2] \ar[r] \ar[r,shift right=2]&
      \bigsqcup\limits_{\substack{J\subset \{1,2,\ldots,r\} \\ |J| = r-2}} \mathbb{A}^{J} \ar[r,shift left] \ar[r,shift right] & \bigsqcup\limits_{\substack{J\subset \{1,2,\ldots,r\} \\ |J| = r-1}}\mathbb{A}^{J} \ar[r] &D,
    \end{tikzcd}
  \end{equation*}
  we deduce from the inclusion-exclusion that
  \begin{align*}
    \sum_{x\in \mathbb{G}_{\mathrm{m}}^{n+r}(\mathbb{F}_q)}\Psi(g(x))
    &= \sum_{J\subset \{1,2,\ldots,r\}}(-1)^{r-|J|} S_J\\
    &= \sum_{J\subset \{1,2,\ldots,r\}}
      (-1)^{r-|J|}q^{|J|} |Z_{J}^*(\mathbb{F}_q)|.
  \end{align*}
  Replacing \(\mathbb{F}_q\) by \(\mathbb{F}_{q^m}\) and  \(\Psi\) by \(\Psi\circ \mathrm{Tr}_{\mathbb{F}_{q^m}/\mathbb{F}_q}\) in the above calculation,
  and exponentiating,
  we get the desired result.
\end{proof}

\begin{proof}[Proof of Theorem~\ref{theorem:cancellation}]
  By Corollary~\ref{corollary:visibility},
  the Frobenius eigenvalues of \(\mathrm{H}^{\ast}_{\mathrm{rig},c}(Z)\)
  are weakly visible in \(\det(1-t\alpha_{a}| B)\).
  By Lemma~\ref{lemma:moebius} and Theorem~\ref{theorem:dwrok-trace-formula},
  the Frobenius eigenvalues are weakly visible in \(L^{\ast}(t)\).
  By Lemma~\ref{lemma:from-l-star-to-zeta},
  they are weakly visible in
  \(\{\zeta_{Z_J^*}(t) : J \subset \{1,2,\ldots,r\}\}\).
\end{proof}

\begin{remark}
  The same proof also works for subvarieties in \(\mathbb{G}_{\mathrm{m}}^{n}\).
  That is, if \(\{f_1,\ldots,f_r\}\) is a set of Laurent polynomials, and
  \(Z_{I}\) is the vanishing scheme of \((f_i)_{i\in I}\),
  then the Frobenius eigenvalues of \(Z_{\{1,2,\ldots,r\}}\) is weakly visible
  in the set \(\{\zeta_{Z_I}(t): I\subset \{1,2,\ldots,r\}\}\).
\end{remark}

\section{Reciprocal roots of Fredholm determinants}
\label{sec:artin-hasse}

To establish Theorem~\ref{theorem:beyond-middle-dimension} and
Theorem~\ref{theorem:before-middle-dimension}, we will provide a more
robust result at the chain level.  In this section, we will apply
Adolphson--Sperber's theory to the specific context of our interest
and deduce a lower bound of the \(q\)-orders of ``eigenvalues'' of the operator
\(\alpha_{a}\colon B_{I} \to B_{I}\) introduced in
Construction~\ref{construction:dwork-operator}.  Given that rigid
cohomology are subquotients of the entries in the overconvergent Dwork
complex~\eqref{eq:dwork-complex}, divisibility at the chain level will
imply divisibility at the cohomological level, as per
Lemma~\ref{lemma:factor}.

\begin{notation}\label{notation}
Assume that we are provided with a sequence of polynomials
\(f_{1},\ldots,f_{r} \in \mathbb{F}_{q}[x_{1},\ldots,x_{n}]\), where
the degree of \(f_{i}\) is denoted as \(d_{i}\).  These polynomials
are ordered such that \(d_{1}\geq d_{2} \geq \cdots \geq d_{r}\).  Let
\(Z\) be the vanishing scheme of these polynomials
\(f_{1},\ldots,f_{r}\) in the affine space
\(\mathbb{A}^{n}_{\mathbb{F}_{q}}\).

For a given subset \(I\) of \(\{1,2,\ldots,n+r\}\), we define
\(I^{\prime}\) as \(I \cap\{1,2,\ldots,n\}\) and \(I^{\prime\prime}\)
as \(I \cap \{n+1,\ldots,n+r\}\).

Recall that \(B_{I}\) represents the subspace \(x^{I}B\) within the
Monsky-Washnitzer algebra \(B\) (refer to
Construction~\ref{construction:exponential-model-complex}),
and \(\alpha_{1}\), \(\alpha_{a}\) are the Dwork operators acting on \(B_{I}\)
(Construction~\ref{construction:dwork-operator}).
\end{notation}

The main result of this section is the following lemma, which is a
straightforward consequence of Adolphson--Sperber's result
(Theorem~\ref{theorem:adolphson-sperber}) once we explained the
relations between two different representations of Frobenius
structures.

\begin{lemma}%
\label{lemma:fredholm-determinant-of-alpha-on-bi}
Let notation be as in Construction~\ref{construction:dwork-operator}.
Let \(\lambda\) be a reciprocal root of the \(p\)-adic entire function
\(\det(1-t\alpha_{a}| B_I)\).
Then
\begin{equation*}
\operatorname{ord}_q \lambda \geq \frac{1}{d_1}\left( |I^{\prime}| + \sum_{i\in I^{\prime\prime}} (d_1-d_i)\right)
= \frac{1}{d_1}\left( |I| + \sum_{i\in I^{\prime\prime}} (d_1-d_i-1) \right),
\end{equation*}
where \(\operatorname{ord}_q\) is the \(p\)-adic valuation normalized so that
\(\operatorname{ord}_q(q)=1\).
\end{lemma}

\begin{remark}%
The combination of
Lemma~\ref{lemma:fredholm-determinant-of-alpha-on-bi} and
Lemma~\ref{lemma:factor} already provides interesting bounds on the
\(q\)-orders of the Frobenius eigenvalues of \(Z\) in all
cohomological degrees. However, upon closer examination of these
bounds, one finds that they are not as strong as asserted by our main
theorems.  To achieve stronger results, we will need to further cut
down some excess contributions, with the assistance of an algebraic
lemma, Lemma~\ref{lemma:new-sequence}.  Additionally, we will require
some arguments to elevate the \(q\)-order estimates to
\(q\)-divisibility bounds in the ring of algebraic integers.  These
steps will be addressed in the later sections.
\end{remark}

\begin{remark}%
Lemma~\ref{lemma:fredholm-determinant-of-alpha-on-bi} relies on the
work of Adolphson and
Sperber~\cite[Proposition~4.2]{adolphson-sperber:chevalley-warning},
which uses a more complicated representation of the Dwork operator.
In this remark, we will provide an informal explanation of why such a
more intricate choice is necessary.  For simplicity, we will focus on
the case when \(q=p\), making \(\alpha_{a} = \alpha_{1}\).

Our objective is to provide sharp lower bounds for the \(q\)-orders,
or equivalently, upper bounds for the \(p\)-adic absolute values, of
the reciprocal roots of the Fredholm determinant
\(\det(1-t\alpha_{1} | B_{I}) = \sum_{m} t^{m} \cdot
\mathrm{Tr}(\wedge^{m}\alpha_{1})\).  By the theory of Newton
polygons, we need to give upper bounds for
\(|\mathrm{Tr}(\wedge^{m}\alpha_{1})|\).

We can view \(\alpha_{1}\) as an infinite matrix by fixing the
standard ``basis'' of the infinite-dimensional \(K\)-vector space
\(B_{I}\) comprised of the monomials:
\begin{equation}
\label{eq:basis-again}
\{x_{1}^{u_{1}}\cdots x_{n+r}^{u_{n+r}}:u_{j}\geq0\text{ and }u_{i}\geq 1\text{ if }i\in I\}.
\end{equation}
A straightforward computation shows that the infinite matrix
representing \(\alpha_{1}\) is \([F_{pu-v}]\)
(\(u_{j},v_{j}\geq0, u_{i},v_{i}\geq 1\) if \(i \in I\)), where
\(F_{1}(x) = \sum_{u\in\mathbb{N}^{n+r}} F_{u} x^{u}\) is the power
series expansion of the function \(F_{1}\)
\eqref{eq:frobenius-function} that defines the Frobenius structure.
The number \(\mathrm{Tr}(\wedge^{m}\alpha_{1})\) is just the sum of
\(m \times m\) principal minors of the infinite matrix
\([F_{pu-v}]\). Therefore, ultimately we need a good upper bound for
\(|F_{u}|\).

Since the function \(F_{1}\) is derived from the Dwork exponential
\(\theta(t) = \exp(\pi t - \pi t^{p})\), to get an upper bound for
\(|F_{u}|\), we need an upper bound for the coefficient \(a_{j}\) of
\(t^{j}\) in the power series expansion of \(\theta\).  The most
optimal bound is
\begin{equation}
\label{eq:dwork-exp-bound}
|a_{j}| \leq |p|^{\frac{j(p-1)}{p^{2}}}
\end{equation}
(cf.~\cite[Eq.~(4.7) and p.~57, line 1]{dwork:zeta-function-hypersurface-1}).
But this estimate falls short of meeting our need (see
\eqref{eq:as-bound} below).


Therefore, the matrix of the Dwork operator with respect to the naive
``basis'' \eqref{eq:basis-again} does not yield an ideal estimate.
From an analytic perspective, the key to improving the estimate lies
in finding a suitable ``basis'' for \(B\), such that the matrix of the
Dwork operator under the new ``basis'' has smaller entries.
Equivalently, we would like to construct an isomorphism
\(\rho\colon B \to B\), inducing the following commutative diagram:
\[
\begin{tikzcd}
B \ar{r}{\rho} \ar{d}{\alpha_{1}} & B \ar[dashed]{d}{\beta_{1}} \\
B \ar{r}{\rho} & B
\end{tikzcd};
\]
and we hope the matrix of \(\beta_{1}\) under the ``basis''
\eqref{eq:basis-again} should have smaller entries than those of
\(\alpha_{1}\).  Additionally, we expect \(\beta_{1}\) should map
\(B_{I}\) to \(B_{I}\).

\medskip%
For every choice of \(s=1,2,\ldots,\infty\), Dwork was able to
construct a ``splitting function'' of level
\(s\)~\cite[Eq.~(4.1)]{dwork:zeta-function-hypersurface-1}.  These
splitting functions then give rise to Dwork operators \(\alpha_{1,s}\)
on \(B\), \(B_{I}\), and hence \(\Omega^{m}\).  All of them can play
the role of \(\beta_{1}\) above.  For \(s=1\), we are reduced to the
exponentially twisted \(\alpha_{1}\) we have been using so far.  It is
the choice \(s = \infty\)---corresponding to what we refer to as the
\emph{Artin--Hasse representation} of the Dwork crystal---that has the
most optimal convergence property (cf.~\eqref{eq:inf-level-bound}
below) among all splitting functions, and was extensively employed in
the work of Adolphson and Sperber.  Below we will introduce Dwork's
construction for \(s=\infty\).
\end{remark}

\begin{construction}
Let \(E(t) = \exp\left\{ \sum_{j=0}^{\infty} t^{p^{j}}/p^{j} \right\}\)
be the Artin--Hasse exponential.
It is well-known that \(E(t) \in \mathbb{Z}_{(p)}[\![t]\!]\).
By the theory of Newton polygon, the series \(\sum_{j=0}^{\infty}t^{p^{j}}/p^{j}\)
has a root \(\gamma\) satisfying \(|\gamma| = |\pi| = |p|^{\frac{1}{p-1}}\).
We define \(\theta_{\infty}(t) = E(\gamma\cdot t)\).  Write
\begin{equation*}
\theta_{\infty}(t) = \sum_{j=0}^{\infty} c_{j}t^{j},
\end{equation*}
then we have (cf.~\eqref{eq:dwork-exp-bound})
\begin{equation}\label{eq:inf-level-bound}
|c_{j}| \leq |\pi|^{j} = |p|^{\frac{j}{p-1}}.
\end{equation}
The twisting function \(\exp(\pi t)\) appeared in the overconvergent
Dwork complex may be explained as the infinite product
\begin{equation*}
\widehat{\theta}(t) = \exp(\pi t) = \prod_{j=0}^{\infty} \theta(t^{p^{j}}),
\end{equation*}
Dwork introduced the twisting factor associated with
\(\theta_{\infty}\) as
\begin{equation*}
\widehat{\theta}_{\infty}(t) = \prod_{j=0}^{\infty} \theta_{\infty}(t^{p^{j}})
= \exp\left\{ \sum_{j=0}^{\infty} \gamma_{j} t^{p^{j}} \right\},\quad
\text{so we have }\theta_{\infty}(t) = \widehat{\theta}_{\infty}(t)/\widehat{\theta}_{\infty}(t^{p}).
\end{equation*}
Here, \(\gamma_{j}=\sum_{i=0}^{j}\gamma^{p^{i}}/p^{i}\).  It is easy
to show \(\widehat{\theta}_{\infty}(t)\) is a rigid analytic function
on the open unit disk bounded by \(1\).
See~\cite[Eq.~(4.13)]{dwork:zeta-function-hypersurface-1}.  For a
conceptual explanation of this unusual exponential function in
connection with Frobenii liftings on the formal multiplicative group,
we recommend reading Pulita's
article~\cite{pulita:rank-one-solvable-p-adic-differential-equations}.
\end{construction}

\begin{construction}[Dwork operators \(\beta_{1}\) and \(\beta_{a}\) associated to the Artin--Hasse exponential]
\label{construction:better-dwork-operator}
Recall the meaning of
\(G(x) = \sum_{u\in\mathbb{N}^{n+r}} A_{u} x^{u}\), \(B\), \(B_{I}\)
given in Construction~\ref{construction:exponential-model-complex},
and the definitions of the operators \(\psi\), \(\alpha_{1}\), \(\alpha_{a}\)
given in Construction~\ref{construction:dwork-operator}.  Also recall
that \(\alpha_{1}\) was defined as an exponential twist, i.e., the
following diagram is commutative
\begin{equation*}
\begin{tikzcd}
B_{I} \ar[d,hook,"\cdot\widehat{\theta}",swap] \ar[r,"\alpha_{1}"] & B_{I} \ar[d,hook,"\cdot\widehat{\theta}"] \\
K[\![x_{1},\ldots,x_{n+r}]\!] \ar[r,"\tau^{-1}\circ\psi"] & K[\![x_{1},\ldots,x_{n+r}]\!]
\end{tikzcd},
\end{equation*}
where the vertical arrows send \(\xi\) to
\(\xi\cdot\prod_{u}\widehat{\theta}(A_{u}x^{u})\).

Define the Dwork operator associated to the ``Artin--Hasse
representation'' as a different twist using the commutativity of the
following diagram
\begin{equation*}
\begin{tikzcd}
B_{I} \ar["\cdot\widehat{\theta}_{\infty}",swap,d,hook] \ar[r,"\beta_{1}"] &
B_{I} \ar[d,"\cdot\widehat{\theta}_{\infty}",hook] \\
K[\![x_{1},\ldots,x_{n+r}]\!] \ar[r,"\tau^{-1}\circ\psi"] & K[\![x_{1},\ldots,x_{n+r}]\!]
\end{tikzcd},
\end{equation*}
where the vertical arrows are now given by
\(\xi \mapsto \xi \cdot \prod_{u}\widehat{\theta}_{\infty}(A_{u}x^{u})\).
Using the relation between \(\theta_{\infty}\) and \(\widehat{\theta}_{\infty}\),
we have
\begin{equation*}
\beta_{1}(\xi) =  \left(\psi(\xi\cdot \Phi)\right)^{\tau^{-1}} ,
\end{equation*}
where \(\Phi(x) = \prod_{u} \theta_{\infty}(A_{u}x^{u})\).  Since
\(B_{I}\) are ideals of \(B\), and \(\Phi\) is overconvergent by
definition, it is clear that \(\beta_{1}\colon B_{I} \to B_{I}\) is
well-defined for any subset \(I\) of \(\{1,2,\ldots,n+r\}\).  The
operator \(\beta_{1}\) is a Dwork operator in the sense of
Monsky~\cite{monsky:formal-cohomology-3} since
\(\Phi \in \mathcal{O}_{K}[\![x_{1},\ldots,x_{n+r}]\!] \cap B\).  We
define the operator \(\beta_{a}\) as the \(a\textsuperscript{th}\)
iteration of \(\beta_{1}\): \(\beta_{a} = \beta_{1}^{a}\).  Then
\(\beta_{1} \colon B_{I} \to B_{I}\) is \(\tau^{-1}\)-semilinear, and
\(\beta_{a}\) is \(K\)-linear.
\end{construction}

The following lemma tells us that the we can use the operator
\(\beta_{a}\) to study the Frobenius eigenvalues of the rigid cohomology.

\begin{lemma}%
\label{lemma:same-fredholm-determinant}
Notation as in Construction~\ref{construction:better-dwork-operator},
we have \(\det(1 - t\alpha_{a}| B_{I}) = \det(1 - t\beta_{a} | B_{I})\).
\end{lemma}

\begin{proof}
Since \(\alpha_{1}\) and \(\beta_{1}\) are both defined as twists of
\(\tau^{-1}\circ\psi\).  We have a commutative diagram
\begin{equation*}
\begin{tikzcd}
B_I \ar[r,hook] \ar[d,"\alpha_{1}"]& K[\![x_1,\ldots,x_{n+r}]\!] \ar[r,"\rho"] \ar[d,"\widetilde{\alpha}_1"]
& K[\![x_1,\ldots,x_{n+r}]\!]  \ar[d,"\widetilde{\beta}_1"] & B_I \ar[l,hook]  \ar[d,"\beta_1"] \\
B_I \ar[r,hook] & K[\![x_1,\ldots,x_{n+r}]\!] \ar[r,"\rho"] & K[\![x_1,\ldots,x_{n+r}]\!] & B_{I} \ar[l,hook]
\end{tikzcd},
\end{equation*}
where the hook arrows are the natural inclusions, and,
for a power series \(\xi \in K[\![x_1,\ldots,x_{n+r}]\!]\),
\begin{align*}
  \widetilde{\alpha}_1(\xi) &= \left\{ \prod_{u}\widehat{\theta}(A_u x^u) \right\}^{-1}\cdot \tau^{-1}\left(\psi\left(\prod_{u}\widehat{\theta}(A_u x^u)\cdot \xi\right)\right),\\
  \widetilde{\beta}_1(\xi) &= \left\{ \prod_{u}\widehat{\theta}_{\infty}(A_u x^u) \right\}^{-1}\cdot \tau^{-1}\left(\psi\left(\prod_{u}\widehat{\theta}_{\infty}(A_u x^u)\cdot \xi\right)\right),\\
  \rho(\xi) &= \xi \cdot \frac{\prod_u \widehat{\theta}(A_u x^u)}{\prod_u \widehat{\theta}_{\infty}(A_u x^u)}.
\end{align*}
It suffices to show the ratio of twisting factors
\(\prod_{u}\widehat{\theta}(A_{u}x^{u})/\widehat{\theta}_{\infty}(A_{u}x^{u})\),
and its reciprocal, are overconvergent, i.e., are elements of \(B\).  This will imply that \(\rho\) and \(\rho^{-1}\)
take \(B_I\) into \(B_I\), as \(B_I\) is an ideal of \(B\).
By transporting the structures, the
\(a\textsuperscript{th}\) iterations \(\alpha_a\) and \(\beta_{a}\) of
\(\alpha_1\) and \(\beta_1\) will correspond under
\(\rho\colon B_I \xrightarrow{\sim} B_I\), and will have the same
Fredholm determinant.

To prove the overconvergence of
\(\prod_{u}\widehat{\theta}(A_{u}x^{u})/\widehat{\theta}_{\infty}(A_{u}x^{u})\),
it in turn suffices to prove
\begin{equation*}
\widehat{\theta}(t)/\widehat{\theta}_{\infty}(t)\quad\text{and} \quad
\widehat{\theta}_{\infty}(t)/\widehat{\theta}(t)
\end{equation*}
are overconvergent, i.e., are elements of
\(K\langle t \rangle^{\dagger}\).  For a documented proof of this
basic fact, we refer the reader to Peigen Li's article
\cite[Proposition~2.1(i)]{li_peigen:exponential-sums-and-rigid-cohomology},
where Li also works out the radius of convergence of the ratio.
\end{proof}

\begin{remark}
The mappings
\begin{equation*}
\varphi_{\infty}\colon \xi \mapsto \xi^{\sigma} \cdot \left( \prod_{u} \widehat{\theta}_{\infty}(A_{u}x^{u}) \right)^{-1}
\text{ and }
\varphi\colon \xi \mapsto \xi^{\sigma} \cdot \left( \prod_{u} \widehat{\theta}(A_{u}x^{u}) \right)^{-1}
\end{equation*}
(see Construction~\ref{construction:frobenius}) together with the
connections
\begin{equation*}
\nabla = \widehat{\theta}^{-1} \circ d \circ \widehat{\theta}
\text{ and }
\nabla_{\infty} = \widehat{\theta}_{\infty}^{-1} \circ d \circ \widehat{\theta}_{\infty}
\end{equation*}
define two overconvergent unit-root F-isocrystal structures on
\begin{equation*}
B = K\langle x_{1},\ldots,x_{n+r} \rangle^{\dagger}.
\end{equation*}
The above argument shows that the mapping \(\rho\) induces an
isomorphism between \((B,\nabla,\varphi) = g^{\ast}\mathcal{L}_{\pi}\) and
\((B,\nabla_{\infty}, \varphi_{\infty})\).  What we need is the slightly
stronger result, namely \(\beta_{1}\) also preserves the subspaces
\(B_{I}\) of \(B\): this is to facilitate the chain level argument.
\end{remark}

The plan now is to examine traces and Fredholm determinants of the
operator \(\beta_{a}\colon B_{I} \to B_{I}\) for subsets \(I\) of
\(\{1,2,\ldots,n+r\}\), under the ``basis''
\[
\{x_{1}^{u_{1}}\cdots x_{n+r}^{u_{n+r}}:u_{j}\geq0\text{ and }u_{i}\geq 1\text{ if }i\in I\}
\]
of \(B_{I}\).  Adolphson and Sperber's idea is to use some
combinatorial quantity to measure the absolute value of the entries of the
matrix representation of \(\beta_{a}\), which we introduce below.

\begin{definition}
\label{definition:space-biprime}
Let \(\Delta\) be the Newton polyhedron of \(g\) at infinity, that is,
the convex closure of \(0 \in \mathbb{R}^{n+r}\) and
\(\{u \in \mathbb{N}^{n+r} : \text{the coefficient of }x^u\text{ in }
g \text{ is nonzero}\}\).  Let \(C(\Delta)\) be the smallest conical
region spanned by \(\Delta\).

Define the weight function
\(w\colon \mathbb{R}^{n+r}\to \mathbb{R}\) by
\[w(y_1,\ldots, y_{n+r}) =y_{n+1}+\cdots + y_{n+r},\] and define
\begin{equation*}
w_I = \min\{w(y) : y \in C(\Delta) \cap \mathbb{N}^{n+r}, y_i > 0, \forall i \in I\}.
\end{equation*}
By construction, if
\(g(x) = \sum_{u\in \mathbb{N}^{n+r}} a_{u}x^{u}\), then
\(a_{u} \neq 0\) implies that \(u \in \Delta \cap \mathbb{N}^{n+r}\).
Since \(\Phi(x) = \prod_{u} \theta_{\infty}(A_{u}x^{u})\) (see
Construction~\ref{construction:better-dwork-operator}), the subscripts
of the nonzero coefficients of \(\Phi\) in the power series expansion
are all non-negative integral linear combinations of \(u \in \Delta\).
Hence, if we write
\(\Phi(x) = \sum_{u\in \mathbb{N}^{n+r}} \Phi_{u} x^{u}\), then
\(\Phi_{u} \neq 0\) implies \(u \in C(\Delta)\).
\end{definition}

The following theorem is due to
Adolphson--Sperber~\cite[Proposition~4.2]{adolphson-sperber:chevalley-warning}.
We have only formulated a weaker version on estimating the first slope
which is sufficient for the purpose of this paper.

\begin{theorem}[Adolphson--Sperber]
\label{theorem:adolphson-sperber}
For any reciprocal root \(\lambda\) of the Fredholm
determinant \(\det(1-t\beta_{a}| B_{I})\), we have
\begin{equation*}
\mathrm{ord}_{q}(\lambda) \geq w_{I}.
\end{equation*}
\end{theorem}

\begin{proof}
Recall \(\mathbb{F}_{q} = \mathbb{F}_{p^{a}}\).  By a standard
argument
(see~\cite[Lemma~7.1]{dwork:zeta-function-hypersurface-2}, or \cite[Eq.~(46)]{bombieri:exponential-sums-in-finite-fields-1}), a point
\((x,y) \in \mathbb{R}^{2}\) is a vertex of the Newton polygon of
\(\det(1-t\beta_{a}| B_{I})\) computed with respect to
\(\mathrm{ord}_{q}\) if and only if \((ax,ay)\) is a vertex of the
Newton polygon of \(\det(1-t\beta_{1}| B_{I})\) (view \(\beta_{1}\)
as a \(\mathbb{Q}_{p}(\zeta_{p})\)-linear operator) with respect to
the valuation \(\mathrm{ord}_{p}\).  Hence, it suffices to estimate
the smallest slope of the \(p\)-adic Newton polygon of
\(\det(1 - t\beta_{1}| B_{I})\).

Recall that \(\beta_{1} = \tau^{-1}\circ\psi\circ \Phi\).
We write \(\Phi(x) = \sum_{u\in \mathbb{N}^{n+r}}\Phi_{u} x^{u}\).
Then
by~\cite[Eq.~(2.12)]{adolphson-sperber:newton-polyhedra-degree-l-function},
we have
\begin{equation}\label{eq:as-bound}
\operatorname{ord}_{p}(\Phi_{u}) \geq \frac{1}{p-1}w(u).
\end{equation}
(If we use \(F\) instead of \(\Phi\), we would get a worse bound.
So it is crucial to work with the Artin--Hasse representation here.)
A \(\mathbb{Q}_{p}(\zeta_{p})\)-``basis'' of \(B_{I}\) is of the form
\begin{equation*}
\lambda_{l} \cdot x^{u}, \quad l=1,\ldots,a, u \in \mathbb{N}^{n+r}, u_{i} \geq 1 \text{ if } i\in I,
\end{equation*}
where \(\{\lambda_{l}:l=1,\ldots,a\}\) is an integral basis of
\(\mathcal{O}_{K}\) over \(\mathbb{Z}_{p}[\zeta_{p}]\).  When
\(I = \emptyset\), the operator \(\beta_{1}\) is represented by an
infinite matrix
\begin{equation*}
C=
\begin{bmatrix}
  C_{00} & C_{01} & \cdots & C_{0i} & \cdots \\
  C_{10} & C_{11} & \cdots & C_{1i} & \cdots \\
  \vdots & \vdots &       & \vdots \\
  C_{i0} & C_{i1} & \cdots & C_{ii} & \cdots \\
  \vdots & \vdots &      & \vdots
\end{bmatrix},
\end{equation*}
where each \(C_{ij}\) is a finite block matrix, corresponding the
components of \(\beta_{1}(\lambda_{l}x^{v})\) with respect
to \(\lambda_{m}x^{u}\), with \(w(v) = i\), \(w(u)=j\).  Any entry in
this block matrix is of the form
\(\lambda_{m}\lambda_{l}^{-1}\Phi_{pu-v}\).  When \(I\neq \emptyset\),
the matrix representation of \(\alpha_{1}|_{B_{I}}\) is a
suitable submatrix \(C_{I}\) of \(C\).

An entry of \(C_{I}\) on the diagonal is of the form
\(\Phi_{(p-1)u}\), where \(u_{i}\geq 1\), for all \(i\in I\).
By~\eqref{eq:as-bound}, we find that the trace of
\(\beta_{1}|_{B_{I}}\) satisfies the estimate
\begin{equation*}
\operatorname{ord}_{p}\operatorname{Tr}(\beta_{1}| B_{I}) \geq w_{I}.
\end{equation*}
This shows that in the power series expansion of
\(\det(1-t\beta_{1}| B_{I})\), the \(p\)-order of the coefficient
of \(t\) is at least \(w_{I}\).  When it comes to the coefficient of
\(t^{m}\) in the Fredholm determinant, where \(m\geq 2\), these are
computed as sums of \(m\times m\) principal minors of the matrix
\(C_{I}\).  By employing the determinant formula in terms of the
matrix entries, one can readily observe that this coefficient has
\(p\)-order \(\geq mw_{I}\).  The theorem can be deduced by examining
the Newton polygon of \(\det(1-t\beta_{1}| B_{I})\).
\end{proof}


\begin{proof}[End of proof of Lemma~\ref{lemma:fredholm-determinant-of-alpha-on-bi}]
Since the problem only concerns the Fredholm determinant, we can
replace \(\alpha_{a}\) by \(\beta_{a}\) by virtue of
Lemma~\ref{lemma:same-fredholm-determinant}.  By
Theorem~\ref{theorem:adolphson-sperber}, it suffices to show
\[
w_I \geq \frac{1}{d}\left( |I^{\prime}|+ \sum_{i\in I^{\prime\prime}} (d-d_i)  \right).
\]
This is a problem of linear programming.  We are dealing with
non-negative integers \(y_1,\ldots,y_{n+r}\) subject to the following
constraints:
\begin{equation}\label{eq:linear-prog}
\begin{cases}
  y_i \geq 1, \forall i \in I, \\
  y_1+\cdots + y_n\leq d_1y_{n+1}+\cdots +d_r y_{n+r},
\end{cases}
\end{equation}
and we want to control the minimum of \(y_{n+1} + \cdots + y_{n+r}\).
Write
\[
\xi_i =
\begin{cases}
  y_i - 1, & i \in I; \\
  y_i, & i \notin I.
\end{cases}
\]
Then \eqref{eq:linear-prog} is equivalent to
\begin{equation}\tag{\theequation${}^{\prime}$}
\label{eq:linear-prog-prime}
\begin{cases}
  \xi_i \geq 0, \forall i =1,2,\ldots,n+r; \\
  \sum_{i=1}^{r}d_i \xi_i \geq \sum_{i=1}^{n}\xi_i + |I^{\prime}| - \sum_{i\in I^{\prime\prime}}d_{i}
\end{cases}
\end{equation}
It follows that
\begin{align*}
  &y_{n+1} + \cdots + y_{n+r}  = \sum_{i=1}^{r} \xi_{i+r} + |I^{\prime\prime}| \\
  =& \frac{1}{d_1}\left( \sum_{i=1}^{r} (d_1-d_i)\xi_{i+r} + \sum_{i=1}^{r} d_i \xi_{i+r}\right) + |I^{\prime\prime}| \\
  \text{[apply \eqref{eq:linear-prog-prime}]}~\geq & \frac{1}{d_1}\left(|I^{\prime}| - \sum_{i\in I^{\prime\prime}}d_i\right) + |I^{\prime\prime}|
                                                     = \frac{1}{d_1}\left(|I^{\prime}| + \sum_{i\in I^{\prime\prime}}(d_1-d_i)\right).
\end{align*}
This completes the proof.
\end{proof}

\section{Two lemmas}
\label{sec:algebra}

The goal of this section is to prove two lemmas,
Lemma~\ref{lemma:new-sequence}, and Lemma~\ref{lemma:local-cohomology}
that will be used in the proofs of
Theorem~\ref{theorem:beyond-middle-dimension} and
Theorem~\ref{theorem:before-middle-dimension}.

\subsection*{An algebraic lemma}
Throughout this subsection, we assume that \(k\) is an algebraically closed field.
The common zero locus, in \(\mathbb{A}_{k}^n\), of a collection of polynomials
\(f_1,\ldots,f_r \in k[x_1,\ldots,x_n]\) will be denoted by
\(Z(f_1,\ldots,f_r)\).


\begin{lemma}%
\label{lemma:recombination}
Let \(g_{1},\ldots,g_{m}\) and \(f_{m+1},\ldots,f_{r}\) be elements
of \(k[x_{1},\ldots,x_{n}]\).  Assume that
\(\dim Z(g_1,\ldots,g_m) = n-m\), and
\begin{equation}
\label{eq:inductive-hypo}
\dim Z(g_1,\ldots, g_m) > \dim Z(g_1,\ldots, g_m, f_{m+1}, \ldots, f_r).
\end{equation}
Then, there are constants \(c_{m+1}, \ldots, c_r\) in \(k\), not all zero, such that
\begin{itemize}
\item if \(f_{m+1}\) does not vanish on \(Z(g_1,\ldots,g_m)\), \(c_{m+1}=1\),
\item if \(f_{m+1}\) vanishes identically on \(Z(g_1,\ldots,g_m)\),
\(c_{m+1}=0\), and
\item \(\dim Z(g_1,\ldots, g_m, c_{m+1}f_{m+1}+ \cdots + c_rf_r) = n - m -1\).
\end{itemize}
\end{lemma}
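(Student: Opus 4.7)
My plan is to reduce the statement to a prime avoidance problem on the coordinate ring of $W := Z(g_1,\ldots,g_m)$. Since $W$ is a set-theoretic complete intersection, each of its irreducible components $W_1,\ldots,W_s$ has dimension exactly $n-m$; let $\mathfrak{p}_1,\ldots,\mathfrak{p}_s \subset k[x_1,\ldots,x_n]$ be the minimal primes corresponding to the $W_j$. The dimension hypothesis \eqref{eq:inductive-hypo} forces that, for every $j$, at least one of $f_{m+1},\ldots,f_r$ does \emph{not} lie in $\mathfrak{p}_j$; otherwise $W_j$ would be contained in $Z(g_1,\ldots,g_m,f_{m+1},\ldots,f_r)$ and that variety would have dimension $\geq n-m$.

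I would then look for constants $c_{m+1},\ldots,c_r \in k$ such that $h := c_{m+1}f_{m+1}+\cdots+c_rf_r$ lies outside every $\mathfrak{p}_j$. Split into cases according to how $c_{m+1}$ is forced. If $f_{m+1}$ vanishes identically on $W$ (equivalently, $f_{m+1}\in \mathfrak{p}_j$ for all $j$), put $c_{m+1}=0$; for each $j$ some $f_i$ with $i\geq m+2$ must avoid $\mathfrak{p}_j$ by Step~1, so the locus
\[
L_j = \Big\{(c_{m+2},\ldots,c_r)\in k^{r-m-1} : \textstyle\sum_{i\geq m+2} c_i f_i \in \mathfrak{p}_j\Big\}
\]
is a proper $k$-subspace of $k^{r-m-1}$. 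If $f_{m+1}$ does not vanish identically on $W$, put $c_{m+1}=1$; then for each $j$ the analogous locus $L_j = \{c : f_{m+1}+\sum_{i\geq m+2} c_i f_i \in \mathfrak{p}_j\}$ is either empty (when $f_{m+1}\notin\mathfrak{p}_j$ and all $f_{m+2},\ldots,f_r\in \mathfrak{p}_j$, so automatically $h\not\equiv 0 \bmod \mathfrak{p}_j$) or a proper affine subspace. Since $k$ is algebraically closed, hence infinite, $k^{r-m-1}$ is not covered by the finitely many proper affine subspaces $L_1,\ldots,L_s$, so a suitable $(c_{m+2},\ldots,c_r)$ exists.

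Finally, once $h\notin \mathfrak{p}_j$ for any $j$, the element $h$ is a non-zero divisor on each component $W_j$, so Krull's Hauptidealsatz gives that $V(h)\cap W_j$ is empty or has pure dimension $(n-m)-1$. Hence
\[
Z(g_1,\ldots,g_m,h) = W \cap V(h) = \bigcup_j (W_j\cap V(h))
\]
has dimension at most $n-m-1 = n-(m+1)$, which is exactly the condition for being a set-theoretic complete intersection cut out by $m+1$ equations.

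The main obstacle is a bookkeeping one: the constraint that $c_{m+1}$ be forced to $0$ or $1$ reduces the avoidance freedom by one coordinate, so I must verify in each case that, after this choice, the remaining condition on $(c_{m+2},\ldots,c_r)$ still defines only a proper affine subspace modulo each $\mathfrak{p}_j$. This comes down to ensuring that whenever $f_{m+1}\in \mathfrak{p}_j$ there is some $f_i$ with $i\geq m+2$ outside $\mathfrak{p}_j$, which is precisely what Step~1 gives.
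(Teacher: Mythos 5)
Your proof is correct, and it pursues the same underlying strategy as the paper -- find constants $c_{m+1},\ldots,c_r$ so that $h = \sum_{i>m} c_i f_i$ lies outside every minimal prime $\mathfrak{p}_j$ of $Z(g_1,\ldots,g_m)$, with $c_{m+1}$ forced to $0$ or $1$ as prescribed -- but you package the coefficient search differently. The paper's argument is iterative: it adjoins one polynomial at a time with a generically chosen scalar, tracks which components have been killed so far, and repeats until all of them are. You replace this loop with a one-shot prime-avoidance count in the coefficient space $k^{r-m-1}$: for each $j$, the bad locus $L_j$ is empty or a proper affine $k$-subspace (the dimension hypothesis is used exactly once, to guarantee that some admissible $f_i$ escapes $\mathfrak{p}_j$), and since $k$ is infinite, the finite union $\bigcup_j L_j$ cannot exhaust $k^{r-m-1}$. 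This is cleaner than the paper's version, and in particular it sidesteps the bookkeeping of checking, at each iteration, that the new generic scalar preserves the non-vanishing already achieved. One small imprecision at the end: showing $\dim Z(g_1,\ldots,g_m,h) \leq n-(m+1)$ is not by itself ``exactly the condition'' for a set-theoretic complete intersection; you also need $Z(g_1,\ldots,g_m,h)\neq\emptyset$ so that Krull's height theorem pushes the dimension back up to equality. In context this is automatic, since $Z(g_1,\ldots,g_m,h)$ contains $Z(g_1,\ldots,g_m,f_{m+1},\ldots,f_r)=Z$, the nonempty variety one started from; the paper elides this in exactly the same way (it simply asserts nonemptiness), but it is worth a sentence.
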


\begin{proof}
  Denote the irreducible components of the
  variety \(Z(g_1, ..., g_m)\) by
  \(D_1, \ldots, D_h\). These components
  have the same dimension by the unmixedness theorem.
  If \(f_{m+1}\) vanishes (identically) on all components \(D_1, \ldots, D_h\),
  then \(\dim Z(g_1,..., g_m, f_{m+1}) = \dim Z(g_1,..., g_m)\).
  In this case, we set \(c_{m+1}=0\), drop \(f_{m+1}\) from our list, and consider
  the shorter list \(\{g_1,\ldots, g_m, f_{m+2}, \ldots, f_r\}\) which
  still satisfies the condition \eqref{eq:inductive-hypo}.

  Hence, without loss of generality, we may assume that
  \(f_{m+1}\) does not identically vanish on \(D_1, \ldots, D_{h_1}\)
  with \(h_1>0\), but vanishes on \(D_{h_1+1}, \ldots, D_h\).
  If \(h_1=h\), then
  \[
    \dim Z(g_1,\ldots, g_m, f_{m+1}) < \dim Z(g_1,\ldots, g_m).
  \]
  Since the variety \(Z(g_1,\ldots,,g_{m},f_{m+1})\) is non-empty, this forces
  that
  \[
    \dim Z(g_1,\ldots, g_m, f_{m+1}) = \dim Z(g_1,\ldots, g_m) - 1,
  \]
  that is, \(\dim Z(g_1, ..., g_m, f_{m+1}) = n-m-1\).

  Assume now \(h_1 < h\). By condition \eqref{eq:inductive-hypo}, there is
  another polynomial among \(\{f_{m+2}, \ldots, f_r\}\),
  say \(f_{m+2}\), which does not vanish identically on
  all of \(D_{h_1+1}, \ldots, D_h\). Without loss of generality,
  we may assume that \(f_{m+2}\) dos not vanish on
  \(D_{h_1+1}\), \(\ldots\), \(D_{h_1+h_2}\) with \(h_2>0\),
  but vanishes on \(D_{h_1+h_2+1}, \ldots, D_h\).

  \begin{claim} \textit{There is a non-zero constant \(c\) in \(k\) such that
  \(f_{m+1}+cf_{m+2}\) does not vanish on}
  \(D_1, \ldots, D_{h_1}, \ldots, D_{h_1+h_2}\).
  \end{claim}

  \begin{proof}[Proof of Claim]
    Because \(f_{m+1}\) vanishes on
    \(D_{h_1+1},\ldots, D_{h_1+h_2}\), and \(f_{m+2}\) does not,
    for any non-zero constant \(c\) in \(k\),
    the polynomial \(f_{m+1} + cf_{m+2}\) does not vanish on
    \(D_{h_1+1},\ldots, D_{h_1+h_2}\). For each \(i =1, \ldots, h_1\),
    we can choose \(x_i\) in \(D_i\) such that \(f_{m+1}(x_i)\) is non-zero as
    \(f_{m+1}\) is not identically zero on \(D_i\). Choose non-zero
    constant \(c\) in \(k\) such that none of the \(h_1\) numbers
    \begin{equation*}
      f_{m+1}(x_i) + c f_{m+2}(x_i) , i=1,\ldots, h_1
    \end{equation*}
    is zero: one simply chooses
    any non-zero \(c\) in \(k\) such that \(c\) is not among the \(h_1\) numbers
    \[\{-f_{m+1}(x_i)/f_{m+2}(x_i), i=1,\ldots, h_1\},\] which is possible
    since \(k\) is an infinite field. The claim is proved.
  \end{proof}

  Repeating the above procedure, we see
  there are constants \(c_{m+1},\ldots, c_r\) in \(k\) such
  that the linear combination
  \[
    g_{m+1} = c_{m+1}f_{m+1}+ c_{m+2}f_{m+2}+ \cdots + c_r f_r
  \]
  does not vanish identically on the component \(D_i\) for \(i=1,\ldots, h\). It
  follows that
  \[
  \dim Z(g_1,\ldots,g_m,g_{m+1}) = n-m-1.\qedhere
  \]
\end{proof}

\begin{lemma}%
  \label{lemma:new-sequence}
  Let \(f_1,\ldots, f_r \in k[x_1,\ldots,x_n]\) be a collection of polynomials.
  Set \(d_i = \deg f_i\). Assume that \(d_1 \geq d_2\geq \cdots \geq d_r\).
  Let \(Z = Z(f_1,\ldots,f_r)\).
  Then there exists a new sequence of polynomials
  \(g_1,\ldots,g_r \in k[x_1,\ldots,x_n]\) such that
  \begin{enumerate}
  \item \(Z(g_1,\ldots,g_r) = Z\),
  \item \(\deg g_i \leq d_i\),
  \item \(\dim Z(g_1,\ldots,g_{n-\dim Z}) = \dim Z\).
  \end{enumerate}
\end{lemma}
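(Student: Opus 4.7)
The plan is to build $g_1, \ldots, g_\rho$ (with $\rho = n - \dim Z$) by iteratively invoking Lemma~\ref{lemma:recombination}, and then to pad the remaining slots $g_{\rho+1}, \ldots, g_r$ using the original $f_j$'s that survive the iteration (zero-filling any leftover slots). Throughout, I maintain a working list $\mathcal{L}$ of polynomials whose common vanishing locus is always $Z$, initialized as $(f_1, \ldots, f_r)$.

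At step $m+1$ (with $g_1, \ldots, g_m$ already sitting inside $\mathcal{L}$), I examine the first original $f_j$ still in $\mathcal{L}$. If $f_j$ vanishes on $Z(g_1, \ldots, g_m)$, I delete it from $\mathcal{L}$; otherwise I apply Lemma~\ref{lemma:recombination} to produce $g_{m+1} = f_j + \sum_k c_k f_k$ (the leading coefficient is forced to $1$ in this case) and replace $f_j$ by $g_{m+1}$ in $\mathcal{L}$. Both operations preserve $Z(\mathcal{L}) = Z$: dropping is allowed because $f_j$ vanishes on the complete intersection $Z(g_1, \ldots, g_m)$, and replacement is allowed because $f_j$ can be recovered from $g_{m+1}$ together with the $f_k$'s that remain in $\mathcal{L}$. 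The outer loop terminates exactly when $\dim Z(g_1, \ldots, g_m) = \dim Z$, which forces $m = \rho$ by the set-theoretic complete intersection property; progress is guaranteed because as long as $\dim Z(g_1,\ldots,g_m) > \dim Z$, at least one surviving $f_j$ must fail to vanish on $Z(g_1, \ldots, g_m)$.

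The main obstacle is the degree bound $\deg g_{m+1} \leq d_{m+1}$. To handle it I note that the visited original indices (absorbed or dropped) always form an initial segment $\{1, 2, \ldots, m+D\}$ of $\{1,\ldots,r\}$, since I always address the smallest surviving index first. Hence at step $m+1$ the smallest original index still present is some $h_1 \geq m+1$, and $g_{m+1}$ is a linear combination of $f_{h_1}, f_{h_2}, \ldots$ with $h_1 < h_2 < \cdots$; as the $d_j$ are nonincreasing in $j$, this gives $\deg g_{m+1} \leq d_{h_1} \leq d_{m+1}$.

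After the iteration, the original $f_j$'s still in $\mathcal{L}$ have indices $I+1, \ldots, r$ with $I \geq \rho$ (exactly $\rho$ indices having been absorbed into the $g_i$'s). I set $g_{\rho + k} := f_{I + k}$ for $1 \leq k \leq r - I$ and $g_i := 0$ for the remaining $I - \rho$ slots. Then (1) follows from the invariant $Z(\mathcal{L}) = Z$ and the fact that $\mathcal{L} = (g_1, \ldots, g_\rho, f_{I+1}, \ldots, f_r)$ at termination; (2) follows from $d_{I+k} \leq d_{\rho+k}$ (using $I \geq \rho$) together with $\deg 0 \leq d_i$; and (3) is built in by construction of $g_1, \ldots, g_\rho$.
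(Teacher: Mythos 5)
Your proof is correct and follows essentially the same strategy as the paper's: both iterate Lemma~\ref{lemma:recombination} to build $g_1,\ldots,g_{n-\dim Z}$, and both obtain the degree bound from the fact that each $g_{m+1}$ is a linear combination of $f_j$'s with indices $\geq m+1$. The only cosmetic difference is bookkeeping: you maintain the invariant $Z(\mathcal{L})=Z$ throughout and pad the tail with the surviving $f_j$'s and zeros, whereas the paper records the linear relations in an upper-triangular matrix with $0/1$ diagonal, sets $g_m=f_m$ for $m>n-\dim Z$, and recovers property~(1) afterward via a ``jumping index'' argument.
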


\begin{proof}
  Applying Lemma~\ref{lemma:recombination}
  repeatedly gives rise to a new sequence of polynomials
  \(g_1,\ldots,g_r \in k[x_1,\ldots,x_n]\), satisfying the following:
  \begin{itemize}
  \item
    \(g_1 = f_1\),
    \(g_m = f_m\) if \(m > n-\dim Z\);
  \item there exists an upper-triangular square matrix
    \(B = (b_{\alpha\beta})_{1\leq \alpha,\beta\leq r}\) with entries in \(k\),
    whose diagonal entries are either \(0\) or \(1\), such that
    \begin{equation}\label{eq:linear-relation}
      \begin{bmatrix}
        g_1 \\ \vdots \\ g_r
      \end{bmatrix}
      = B \cdot
      \begin{bmatrix}
        f_1 \\ \vdots \\ f_r
      \end{bmatrix};
    \end{equation}
    and
  \item \(\dim Z(g_1,\ldots,g_{n-\dim Z})=\dim Z\).
  \end{itemize}
  Thus the condition (3) is ensured.
  By construction, \(\deg g_i \leq d_i\) for any \(i=1,2,\ldots,r\). The
  condition (2) is checked.

  Proof of (1).
  Since \(g_1,\ldots,g_r\) are \(k\)-linear combinations of \(f_1,\ldots,f_r\),
  \(Z\) is contained in \(Z(g_1,\ldots,g_r)\). Let us prove
  \(Z(g_1,\ldots,g_r) \subset Z\).

  If the \(j\)\textsuperscript{th} diagonal entry of \(B\) is zero, we say \(j\) is a
  ``jumping'' index. By Lemma~\ref{lemma:recombination}, for each jumping \(j\),
  \(f_{j}\) vanishes identically on \(Z(g_1,\ldots,g_{j-1})\); hence \(f_j\)
  vanishes identically on \(Z(g_1,\ldots,g_r)\) as well.

  It remains to show that if \(\beta\) is not a jumping index,
  \(f_{\beta}(Q)=0\) for any \(Q \in Z(g_1,\ldots,g_r)\).
  For each jumping \(j\), remove the \(j\)\textsuperscript{th}
  row and \(j\)\textsuperscript{th} column from the matrix \(B\). The resulting
  matrix \(C\) is upper triangular, and its diagonal entries are all \(1\). In
  particular, \(C\) is invertible. Evaluating \eqref{eq:linear-relation} at
  \(Q\), using the vanishing of jumping \(f_j\) at \(Q\), we see that, for any
  non-jumping index \(\alpha\), we have
  \begin{equation*}
    0 = g_{\alpha}(Q) = \sum_{\beta \text{ non-jumping}} b_{\alpha\beta}f_{\beta}(Q),
  \end{equation*}
  The matrix associated with the above system of linear equations is the
  invertible matrix \(C\). Thus \(f_{\beta}(Q)=0\) for any non-jumping \(\beta\).
  This concludes the proof.
\end{proof}

\subsection*{A lemma on local cohomology}
In this subsection we prove a lemma
(Lemma~\ref{lemma:local-cohomology}) on local cohomology in the theory
of arithmetic \(\mathscr{D}\)-modules.  It is the analogue of the fact
that the Verdier dual of \(\mathbb{Q}_{\ell}[\dim Z]\) (hence
\(\mathbb{Q}_{\ell}[\dim Z]\) itself) is a perverse sheaf if \(Z\) is
a local complete intersection.

\medskip%
Let us review some basic concepts about arithmetic
\(\mathscr{D}\)-modules.  The reader is referred to Abe and
Caro~\cite[\S1]{abe-caro:theory-of-weights-in-p-adic-cohomology} or
Abe~\cite[\S1]{abe:langlands-correspondence-for-isocrystals-and-crystalline-companions}
for up-to-date surveys.

\medskip%
Let \(k\) be a perfect field (for us, \(k = \mathbb{F}_{q}\)).  Let
\(\mathcal{O}_{K}\) be a complete discrete valuation ring with residue
field \(k\) and field of fractions \(K\).  Assume that \(K\) has
characteristic \(0\).  Let
\(S = \operatorname{Spf}(\mathcal{O}_{K})\).

\medskip%
Let \(\mathcal{P}\) be a smooth formal scheme over \(S\).  Then
Berthelot~\cite[\S2.4]{berthelot:arithmetic-d-modules-1-differential-operators-of-finite-level}
introduced a sheaf \(\mathscr{D}^{\dagger}_{\mathcal{P},\mathbb{Q}}\)
on \(\mathcal{P}\) whose sections are infinite order differential
operators on \(\mathcal{P}\) of \emph{finite level}.  Caro introduced
several finiteness conditions on coherent
\(\mathscr{D}^{\dagger}_{\mathcal{P},\mathbb{Q}}\)-modules:
overcoherence
\cite[Définition~3.1.1]{caro:overcoherent-arithmetic-d-modules},
overholonomicity
\cite[Définition~3.1]{caro:overholonomic-arithmetic-d-modules}, and
``devissability'' by overconvergent F-isocrystals
\cite[Définition~3.2.5]{caro:overconvergent-f-isocrystals-and-differential-overcoherence},
\cite[Definition~2.3.1]{caro-tsuzuki:overholonomicity-of-overconvergent-f-isocrystals}.
Finally, in
\cite[Theorem~2.3.16]{caro-tsuzuki:overholonomicity-of-overconvergent-f-isocrystals},
Caro and Tsuzuki proved that with the presence of Frobenius
structures, these finiteness notions are equivalent, agreeing with
holonomicity introduced by Berthelot~\cite[\S3]{berthelot:introduction-to-arithmetic-d-modules}.

\medskip%
Let \(\mathrm{Hol}(\mathcal{P})\) denote the strictly full, thick
subcategory of
\(\mathscr{D}^{\dagger}_{\mathcal{P},\mathbb{Q}}\)-modules generated
by holonomic
\(\mathscr{D}^{\dagger}_{\mathcal{P},\mathbb{Q}}\)-modules that can be
endowed with \(q^{s}\)-power Frobenius structures for some \(s\),
although the Frobenius structure is not part of the defining data.
Let \(D^{b}_{\mathrm{hol}}(\mathcal{P})\) be the strictly full
subcategory of
\(D^{b}(\mathscr{D}^{\dagger}_{\mathcal{P},\mathbb{Q}})\) consisting
of complexes with cohomology lying in \(\mathrm{Hol}(\mathcal{P})\).

Since \(D^{b}_{\mathrm{hol}}(\mathcal{P})\) is stable under the usual
and extraordinary direct image functors, the usual and extraordinary
inverse image functors, and duality functors, as shown in
\cite{caro:overholonomic-arithmetic-d-modules}, one can use these
categories to canonically associate to each realizable \(k\)-variety
(defined below) \(V\) a coefficient category
\(D^{b}_{\mathrm{hol}}(V/K)\) for \(p\)-adic cohomology theory,
amenable of the Grothendieck six functor formalism, see
\cite{caro:6op,abe-caro:theory-of-weights-in-p-adic-cohomology}.

\medskip%
A \emph{realizable variety} \(V\) over \(k\) is a variety that admits
an immersion \(V \to \mathcal{P}\), where \(\mathcal{P}\) is a smooth,
\emph{proper} formal scheme over \(S\).  For each realizable variety
\(V\) over \(k\), and any immersion \(V \to \mathcal{P}\) as above, we
have a functor
\(D^{b}_{\mathrm{hol}}(V/K) \to D^{b}_{\mathrm{hol}}(\mathcal{P})\),
and this functor induces an equivalence between
\(D^{b}_{\mathrm{hol}}(V/K)\) and the strictly full subcategory of
\(D^{b}_{\mathrm{hol}}(\mathcal{P})\) consisting of objects that are
supported on \(V\), in sense that there is an isomorphism
\begin{equation*}
\mathcal{M} \xrightarrow{\sim} \mathbb{R}\underline{\Gamma}_{\overline{V}}^{\dagger} (\mathcal{M} ({}^{\dagger}(\overline{V}\setminus V))),
\end{equation*}
where \(\overline{V}\) is the Zariski closure of \(V\) in \(\mathcal{P} \otimes_{\mathcal{O}_{K}}k\).
For the definition of the local cohomology functor
\(\mathbb{R}\underline{\Gamma}^{\dagger}_{Z}\), see
\cite[2.1.3 (divsior case) and Définition~2.2.6 (general case)]{caro:overcoherent-arithmetic-d-modules}.
For the definition of the functor
\(\mathcal{M} \mapsto \mathcal{M}({}^{\dagger}Z)\), see
\cite[Définition~2.2.6]{caro:overcoherent-arithmetic-d-modules}.

\medskip%
For any realizable variety \(V\) over \(k\),
\(D^{b}_{\mathrm{hol}}(V/K)\) has a standard t-structure
(\cite[\S1.2]{abe-caro:theory-of-weights-in-p-adic-cohomology}).
The objects in the heart of \(D^{b}_{\mathrm{hol}}(V/K)\) are analogues to
perverse sheaves on \(V\) in the \(\ell\)-adic theory.

\medskip%
For a morphism \(f\colon V \to V^{\prime}\) of
realizable varieties, we have the ordinary and extraordinary inverse image
functors
\begin{equation*}
f^{+}, f^{!} \colon D^{b}_{\mathrm{hol}}(V^{\prime}/K) \to D^{b}_{\mathrm{hol}}(V/K),
\end{equation*}
and the ordinary and extraordinary direct image functors
\begin{equation*}
f_{+}, f_{!} \colon D^{b}_{\mathrm{hol}}(V/K) \to D^{b}_{\mathrm{hol}}(V^{\prime}/K).
\end{equation*}
They satisfy the usual adjunction properties.  For each \(V\), there
is a duality functor
\(\mathbb{D}_{V}\colon D^{b}_{\mathrm{hol}}(V/K)^{\mathrm{op}} \to
D^{b}_{\mathrm{hol}}(V/K)\), which is a t-exact anti-equivalence
satisfying \(\mathbb{D}_{V}^{2} = \mathrm{Id}\).  The duality functors
swap ordinary and extraordinary direct and inverse images:
\begin{align*}
\mathbb{D}_{V^{\prime}} f_{!} = f_{+}\mathbb{D}_{V}, \quad&
\mathbb{D}_{V^{\prime}} f_{+} = f_{!}\mathbb{D}_{V}, \\
f^{!}\mathbb{D}_{V^{\prime}} = \mathbb{D}_{V}f^{+}, \quad&
f^{+}\mathbb{D}_{V^{\prime}} = \mathbb{D}_{V}f^{!}.
\end{align*}

\begin{example}\label{example:structure-sheaf-of-an}
Suppose \(\mathcal{P}\) is a purely \(n\)-dimensional, proper smooth
formal scheme over \(S\).  Let \(H\) be a divisor of
\(P = \mathcal{P}\otimes_{\mathcal{O}_{K}}k\) and
\(U = P \setminus H\).  Then we have an equivalence
\begin{equation*}
D^{b}_{\mathrm{hol}}(U/K) \simeq \{\mathcal{M} \in D^{b}_{\mathrm{hol}}(\mathcal{P}) : \mathcal{M} \xrightarrow{\sim} \mathcal{M}({}^{\dagger}H)\}.
\end{equation*}
Under this equivalence, the complex \(K_{U}[n] = a^{+}K[n]\), where
\(a\colon U \to \operatorname{Spec}k\) is the canonical morphism, is
represented by
\(\mathcal{O}_{\mathcal{P},\mathbb{Q}}({}^{\dagger}H)\), the
(specialization of the) sheaf of function on the rigid analytic space
\(\mathcal{P}_{K}\) overconvergent along \(H\).  Moreover,
\(\mathcal{O}_{\mathcal{P},\mathbb{Q}}({}^{\dagger}H)\) is self-dual,
that is, \(\mathbb{D}_{U}(K_{U}[n]) \simeq K_{U}[n]\).  See
\cite[\S1.5.6]{abe:langlands-correspondence-for-isocrystals-and-crystalline-companions}.
Lastly, we mention that the objects in the heart of the t-structure of
\(D^{b}_{\mathrm{hol}}(U/K)\) are represented by actual holonomic
modules with overconvergent singularities along \(H\):
\begin{equation*}
\{ \mathcal{M} \in \mathrm{Hol}(\mathcal{P}) : \mathcal{M} \xrightarrow{\sim} \mathcal{M}({}^{\dagger}H)\}.
\end{equation*}
\end{example}

\begin{lemma}\label{lemma:local-cohomology}
Regard \(\mathbb{A}^{N}_{k}\) as a locally closed subscheme of the
formal projective space \(\widehat{\mathbb{P}}^{N}\) over
\(\operatorname{Spf}(\mathcal{O}_{K})\).  Let \(f_{1},\ldots, f_{r}\)
be regular functions on \(\mathbb{A}^{N}_{k}\), defining a closed
subscheme \(Z\) of \(\mathbb{A}^{N}_{k}\).  Assume that
\(\dim Z = N-r\), then the local cohomology
\(\mathscr{D}^{\dagger}_{\widehat{\mathbb{P}}^{N},\mathbb{Q}}\)-modules
\(\mathcal{H}^{m}\{\mathbb{R}\underline{\Gamma}_{Z}^{\dagger}(\mathcal{O}_{\widehat{\mathbb{P}}^{N},\mathbb{Q}}({}^{\dagger}H))[r]\}\)
are zero unless \(m = 0\).
\end{lemma}

\begin{proof}
Consider the function
\(g\colon \mathbb{A}^{N}\times \mathbb{A}^{r} \to \mathbb{A}^{1}\)
defined by \(g = \sum x_{N+i}f_{i}\).  Let \(\mathcal{L}\) be the
\(\mathscr{D}^{\dagger}\)-module on \(\mathbb{A}^{N+r}\) obtained by
regarding the Dwork isocrystal as a \(\mathscr{D}^{\dagger}\)-module:
\(\mathcal{L}=\mathrm{sp}_{+}(g^{\ast}\mathcal{L}_{\pi})\)
(cf.~\cite[\S1.2.14]{abe-caro:theory-of-weights-in-p-adic-cohomology}).
Let \(\varpi\colon \mathbb{A}^{N+r} \to \mathbb{A}^{N}\) be the
projection.  Then by the theorem of Baldassarri and Berthelot
(Theorem~\ref{theorem:local-comparison}),
\begin{equation*}
\varpi_{+}(\mathcal{L}) \simeq \mathbb{R}\underline{\Gamma}^{\dagger}_{Z}(\mathcal{O}_{\widehat{\mathbb{P}}^{N},\mathbb{Q}}({}^{\dagger}H))[r].
\end{equation*}
In view of the discussion on the t-structure of
\(D^{b}_{\mathrm{hol}}(\mathbb{A}^{N}/K)\) in
Example~\ref{example:structure-sheaf-of-an}, it suffices to show
\(\mathbb{R}\underline{\Gamma}^{\dagger}_{Z}(\mathcal{O}_{\widehat{\mathbb{P}}^{N},\mathbb{Q}}({}^{\dagger}H))[r]\)
lies in the heart of the t-structure.

Since \(\varpi\) is an affine morphism, by
\cite[Proposition~1.3.3]{abe-caro:theory-of-weights-in-p-adic-cohomology},
we have
\(\mathbb{R}\underline{\Gamma}^{\dagger}_{Z}(\mathcal{O}_{\widehat{\mathbb{P}}^{N},\mathbb{Q}}({}^{\dagger}H))[r]\in D_{\mathrm{hol}}^{\leq0}(\mathbb{A}^{N}/K)\).
It remains to show that
\(\mathbb{R}\underline{\Gamma}^{\dagger}_{Z}(\mathcal{O}_{\widehat{\mathbb{P}}^{N},\mathbb{Q}}({}^{\dagger}H))[r]\in D^{\geq0}_{\mathrm{hol}}(\mathbb{A}^{N}/K)\).
Let \(i\colon Z \to \mathbb{A}^{N}\) be the inclusion morphism.  Then
\(\mathbb{R}\underline{\Gamma}^{\dagger}_{Z}(\mathcal{O}_{\widehat{\mathbb{P}}^{N},\mathbb{Q}}({}^{\dagger}H))\)
is a representation of \(i_{+}i^{!}(K_{\mathbb{A}^N}[N])\) (cf.~\cite[\S1.1.7]{abe-caro:theory-of-weights-in-p-adic-cohomology}).
Since
\(\mathbb{D}_{\mathbb{A}^{N}}(K_{\mathbb{A}^N}[N])=K_{\mathbb{A}^N}[N]\),
it suffices to show
\(i_{+}i^{+}(K_{\mathbb{A}^N}[N-r]) \in D^{\leq0}_{\mathrm{hol}}(\mathbb{A}^{N}/K)\).
Since \(i_{+}\) is an exact functor
(\cite[Proposition~1.3.2(iii)]{abe-caro:theory-of-weights-in-p-adic-cohomology}),
it suffices to show
\(i^{+}K_{\mathbb{A}^N}[N-r] \in D^{\leq0}_{\mathrm{hol}}(Z/K)\).

To prove this last inclusion, we shall use the constructible
t-structure introduced in
\cite[\S1.3]{abe:langlands-correspondence-for-isocrystals-and-crystalline-companions}.
This is a t-structure
\(({}^{c}D^{\leq0}_{\mathrm{hol}}(V/K),{}^{c}D^{\geq0}_{\mathrm{hol}}(V/K))\)
on \(D^{b}_{\mathrm{hol}}(V/K)\).  Properties of this t-structure that
are relevant to us are the following:
\begin{itemize}
\item For any closed subvariety \(W\) of \(V\), let
\(\iota_{W}\) denote the inclusion map.  Then
\({}^{c}D^{\leq0}_{\mathrm{hol}}(V/K)\) is the full subcategory of
\(D^{b}_{\mathrm{hol}}(V/K)\) consisting of \(\mathcal{M}\) satisfying
the property that for any closed subvariety \(W\),
\begin{equation*}
\mathcal{H}^{m}\iota_{W}^{+}\mathcal{M} = 0 \text{ for any }m < \dim W.
\end{equation*}
See \cite[\S1.3.1]{abe:langlands-correspondence-for-isocrystals-and-crystalline-companions}.

\item If \(V\) is \emph{nonsingular} of pure dimension \(n\).  Any
overconvergent isocrystal \(E\) which admits some \(q^{s}\)-Frobenius
structure on \(V\) determines a \(\mathscr{D}^{\dagger}\)-module
\(\mathcal{E}\).  Then \(\mathcal{E}[-n]\) (an object in
\(D^{b}_{\mathrm{hol}}(V/K)\) of this form is called a \emph{smooth
  object} in
\cite{abe:langlands-correspondence-for-isocrystals-and-crystalline-companions})
is in the heart of the constructible t-structure.  This is simply
because the constructible t-structure is obtained by gluing smooth
objects on smooth locally closed subvarieties.  See
\cite[Proposition~1.3.3]{abe:langlands-correspondence-for-isocrystals-and-crystalline-companions}.
\end{itemize}

In the first property, if we take \(Z = W\), we get
\({}^{c}D^{\leq-\dim Z}_{\mathrm{hol}}(Z/K) \subset D^{\leq0}_{\mathrm{hol}}(Z/K)\).  Therefore, it suffices to show
\(i^{+}K_{\mathbb{A}^N}[N-r]\) falls in
\({}^{c}D^{\leq-\dim Z}_{\mathrm{hol}}(Z/K)\).  Since \(K_{\mathbb{A}^N}[N]\)
comes from the trivial isocrystal on \(\mathbb{A}^{N}\),
\(K_{\mathbb{A}^N} \in {}^{c}D^{\leq0}_{\mathrm{hol}}(\mathbb{A}^{N}/K)\) by the
second property.  Since \(i^{+}\) is \(c\)-t-exact
\cite[Lemma~1.3.2]{abe:langlands-correspondence-for-isocrystals-and-crystalline-companions},
it follows that
\begin{align*}
  i^{+}K_{\mathbb{A}^N}[N][-r]
  &= i^{+}(K_{\mathbb{A}^N})[N-r] \\
  &\in {}^{c}D^{\leq0}_{\mathrm{hol}}(Y/K)[N-r] = {}^{c}D^{\leq-\dim Z}_{\mathrm{hol}}(Y/K) \\
  &\subset D^{\leq0}_{\mathrm{hol}}(Y/K).
\end{align*}
This completes the proof.
\end{proof}

\section{Divisibility of Frobenius eigenvalues}
\label{sec:proof-bound}

We return to the following situation:
\begin{notation}
\label{situation:final}
We are given a collection of
polynomials \(f_1,\ldots,f_r \in \mathbb{F}_{q}[x_1,\ldots,x_n]\), and
denote by
\[
  Z = \operatorname{Spec}\mathbb{F}_q[x_1,\ldots,x_n]/(f_1,\ldots,f_r)
\]
the vanishing scheme of \(f_1,\ldots,f_r\).
By rearranging the order, we shall assume \(d_1\geq \cdots \geq d_r\),
where \(d_i=\deg f_{i}\).
The codimension \(n - \dim Z\) of \(Z\) is denoted by \(c\).
\end{notation}

The following easy lemma should be well-known.  It shows that the
vanishing of compactly supported cohomology of \(Z\) can be controlled
by the number of defining equations of \(Z\).  So
Theorems~\ref{theorem:beyond-middle-dimension} and
\ref{theorem:before-middle-dimension} cover all nontrivial cohomology
degrees.  In its statement, \(\mathrm{H}^{i}_{c}(Z)\) could either be
\(\mathrm{H}^{i}_{\mathrm{rig},c}(Z)\) or
\(\mathrm{H}^{i}_c(Z_{\overline{\mathbb{F}}_q},\mathbb{Q}_{\ell})\).

\begin{lemma}
  \label{lemma:vanishing-by-number-equations}
  Let \(Y\) be a nonsingular affine variety of dimension \(n\).
  Let \(f_1,\ldots,f_r \in \Gamma(Y,\mathcal{O}_Y)\) be regular functions on
  \(Y\). Let \(Z\) be the common zero locus of \(f_1,\ldots,f_r\)
  in \(Y\). Then \(\mathrm{H}^{i}_c(Z)=0\) for \(i < n-r\).
\end{lemma}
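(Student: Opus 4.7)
The plan is to induct on the number $r$ of defining equations, with two standard inputs: the Artin-type vanishing $\mathrm{H}^j(Y') = 0$ for $j > \dim Y'$ whenever $Y'$ is a smooth affine variety, and the excision long exact sequence for compactly supported cohomology attached to a closed-open decomposition. In the $\ell$-adic setting the first ingredient is Artin's theorem; in the rigid setting it is immediate from the fact that a Monsky--Washnitzer presentation of a smooth affine of dimension $d$ has its de Rham complex concentrated in degrees $[0,d]$. The excision long exact sequence is standard in both theories.

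For the base case $r = 0$ we have $Z = Y$, and Poincar\'e duality for the smooth $n$-dimensional variety $Y$ gives $\mathrm{H}^i_c(Y) \cong \mathrm{H}^{2n-i}(Y)^{\ast}(-n)$, which vanishes for $i < n$ by Artin vanishing; this is exactly the claim with $r = 0$.

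For the inductive step, set $Z' = $ (common zero locus of $f_1,\dots,f_{r-1}$ in $Y$) and $U = Z' \setminus Z = Z' \cap D(f_r)$, where $D(f_r) \subset Y$ is the principal open subset on which $f_r$ is invertible. Since $Y$ is smooth affine, so is $D(f_r)$, again of pure dimension $n$; and $U$ is exactly the common zero locus inside $D(f_r)$ of the $r-1$ restricted functions $f_1|_{D(f_r)},\dots,f_{r-1}|_{D(f_r)}$. Applying the inductive hypothesis to $Z' \subset Y$ and to $U \subset D(f_r)$ respectively yields
\begin{equation*}
  \mathrm{H}^i_c(Z') = 0 \qquad \text{and} \qquad \mathrm{H}^i_c(U) = 0 \qquad \text{for all } i < n - r + 1.
\end{equation*}
The excision long exact sequence for the closed embedding $Z \hookrightarrow Z'$ with open complement $U$,
\begin{equation*}
  \cdots \to \mathrm{H}^i_c(U) \to \mathrm{H}^i_c(Z') \to \mathrm{H}^i_c(Z) \to \mathrm{H}^{i+1}_c(U) \to \cdots,
\end{equation*}
then squeezes $\mathrm{H}^i_c(Z) = 0$ for every $i < n - r$, since both flanking groups vanish in this range ($i < n - r$ and $i + 1 \leq n - r$).

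There is no real obstacle: the argument is essentially bookkeeping once one grants the two inputs, and both are uniformly available in rigid and in $\ell$-adic cohomology. The only point worth attention is to apply the inductive hypothesis twice, once to $Z'$ and once to $U$, using that $D(f_r)$ is itself a smooth affine variety of the same dimension $n$ so that the inductive bound $n - (r-1)$ is the correct one for $U$ as well.
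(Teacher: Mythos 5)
Your proof is correct, and it takes a genuinely different route than the paper's. The paper reduces at once to the open complement: it uses the excision sequence for $Z \subset Y$ to reduce the claim to showing $\mathrm{H}^{j}_{c}(Y\setminus Z)=0$ for $j<n-r+1$, and then establishes that vanishing by a Mayer--Vietoris spectral sequence for the cover $Y\setminus Z=\bigcup_{i} D(f_i)$, with $E_1$-terms the compactly supported cohomology of the smooth affine pieces $\bigcap_{i\in I}D(f_i)$. You instead run an induction on $r$, peeling off one equation at a time via the closed-open excision triangle for $Z\hookrightarrow Z'=V(f_1,\dots,f_{r-1})$, with open complement $U=Z'\cap D(f_r)$, and apply the inductive hypothesis twice (once with ambient $Y$, once with ambient $D(f_r)$). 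Both arguments ultimately rest on the same two inputs --- Artin vanishing together with Poincar\'e duality for a smooth affine, and the compactly supported excision sequence --- so they have the same footprint of hypotheses. Your version avoids the spectral sequence and is a bit more elementary; the paper's version is non-inductive and delegates the combinatorics of the $r$ equations to a single Mayer--Vietoris computation. One minor bookkeeping point worth stating in your write-up is that the base case $r=0$ and the inductive hypothesis are applied to arbitrary smooth affine ambient varieties (of varying dimension, in general), so the statement being inducted on must be phrased with $Y$ quantified --- you do this implicitly, but it deserves a sentence.
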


\begin{proof}
  We have a long exact sequence
  \begin{equation*}
    \cdots \to \mathrm{H}^{i}_c(Y)
    \to \mathrm{H}^{i}_c(Z) \to \mathrm{H}^{i+1}_c(Y\setminus Z)
    \to \mathrm{H}^{i+1}_{c}(Y) \to \cdots.
  \end{equation*}
  If \(i \leq n-1\), then
  \(\mathrm{H}^{i}_{c}(Y)=0\) by
  smoothness of \(Y\), Poincaré duality, and Artin vanishing.
  Thus, it suffices to prove
  \(\mathrm{H}^{i}_{c}(Y\setminus Z)=0\) for \(i<n-r+1\).

  Write \(Y \setminus Z = \bigcup_{i=r}^{r} U_{i}\),
  where \(U_i = Y\setminus \{f_i=0\}\).
  Then for \(I\subset \{1,2,\ldots,r\}\),
  \(U_{I}=\bigcap_{i\in I}U_{i}\) equals \(Y\setminus\{\prod_{i\in I}f_{i}=0\}\).
  We have a Mayer--Vietoris spectral sequence
  \begin{equation*}
    E_{1}^{-a,b} = \bigoplus_{|I|=a+1}\mathrm{H}_{c}^{b}(U_{I})
    \Rightarrow \mathrm{H}^{b-a}_c(Y\setminus Z).
  \end{equation*}
  Since each \(U_{I}\) is a smooth affine variety of dimension \(n\),
  by Poincaré duality and Artin vanishing again,
  \(\mathrm{H}^{i+1}_c(U_{I}) = 0\) if \(i < n-1\).
  It follows that
  \begin{equation*}
    E^{-a,b}_{1} \neq 0 \Longrightarrow
    \begin{cases}
      b \geq n, \text{ and }\\
      a \leq r-1,
    \end{cases} \Longrightarrow b-a\geq n-r+1.
  \end{equation*}
  Ergo, \(\mathrm{H}^{i}_{c}(Y\setminus Z)=0\)
  if \(i < n-r+1\).
\end{proof}

\begin{remark}
  (a) The same argument also works for the Betti cohomology of an algebraic
  variety \(Z\) defined by the vanishing of \(r\) regular functions on a smooth
  affine variety \(Y\) over \(\mathbb{C}\).

  (b) The lemma for rigid cohomology also follows directly from
  Corollary~\ref{corollary:compact-support-frobenius}.
\end{remark}

\medskip
The remainder of this section is devoted to the proofs of
Theorems~\ref{theorem:beyond-middle-dimension} and
\ref{theorem:before-middle-dimension}.

\subsection*{Step 1: Reduction}

Recall that \(n - \dim Z\) is denoted by \(c\).
By Lemma~\ref{lemma:new-sequence}, there exists a finite extension \(k^{\prime}\) of
\(\mathbb{F}_q\), and a collection of polynomials \(g_1,\ldots,g_r\),
such that
\begin{itemize}
\item \(\deg g_1=d_1\), \(\deg g_i \leq d_i\);
\item \(Z\) is the common zero locus of \(g_1,\ldots,g_r\), and
\item
\(\operatorname{Spec}k^{\prime}[x_1,\ldots,x_n]/(g_1,\ldots,g_c)\) has
dimension equal to \(\dim Z\).
\end{itemize}

Since the conclusion of Theorem~\ref{theorem:beyond-middle-dimension}
is not sensitive to the base field, and since we have
\begin{align*}
  \nu_j(n;\deg g_1,\ldots,\deg g_r) & \geq \nu_j(n;d_1,\ldots,d_r)\\
  \epsilon_{m}(n;\deg g_1,\ldots,\deg g_r) &\geq \epsilon_{m}(n;d_1,\ldots,d_r),
\end{align*}
it suffices to prove the theorems with \(\mathbb{F}_q\) replaced by \(k^{\prime}\) and
\(f_i\) replaced by \(g_i\). Thus, it suffices to prove
Theorems~\ref{theorem:beyond-middle-dimension} and \ref{theorem:before-middle-dimension}
under the following additional hypothesis:
\begin{equation}\label{eq:hypothesis}
  \textit{The scheme \(\operatorname{Spec}\mathbb{F}_q[x_1,\ldots,x_n]/(f_1,\ldots,f_c)\) has dimension equal to \(\dim Z\).}
\end{equation}

\subsection*{Step 2: Slope estimates}
Let us first prove that the numbers \(\nu_{j}(n; d_1,\ldots,d_r)\) and
\(\epsilon_{m}(n;d_1,\ldots,d_r)\) provide
lower bounds of the \(q\)-order of the Frobenius eigenvalues of
\(\mathrm{H}^{\ast}_{\mathrm{rig},c}(Z)\). Later we will bootstrap this bound
to a bound of \(q\)-divisibility of algebraic numbers.

Recall the meaning of \(g\) \eqref{situation}, \(G\) (Construction~\ref{construction:exponential-model-complex}),
our convention of the sets \(I\), \(I^{\prime}\), and
\(I^{\prime\prime}\) made in Notation~\ref{notation},
as well as the spaces \(B_{I}\) \eqref{eq:decompose-differential-forms}.
Rewrite the
overconvergent Dwork complex \eqref{eq:dwork-complex} as the total
complex of the following double complex (in order to save ink, we have
omitted the monomials \(\mathrm{d}x^I/x^I\) in the expression):
\begin{equation}\label{eq:double-complex}
  \begin{tikzcd}[column sep=small]
    \bigoplus\limits_{\substack{|I^{\prime}|=n \\|I^{\prime\prime}|=0}}B_I \ar[r] & \cdots \ar[r] & \bigoplus\limits_{\substack{|I^{\prime}|=n \\|I^{\prime\prime}|=c}}B_I  \ar[r] & \cdots \ar[r] & \bigoplus\limits_{\substack{|I^{\prime}|=n \\|I^{\prime\prime}|=r}}B_I \\
    {\vdots} \ar[u] & {} & {\vdots} \ar[u]  & {} & {\vdots} \ar[u] \\
    \bigoplus\limits_{\substack{|I^{\prime}|=0 \\|I^{\prime\prime}|=0}}B_I \ar[r] \ar[u]& \cdots \ar[r] & \bigoplus\limits_{\substack{|I^{\prime}|=0 \\|I^{\prime\prime}|=c}}B_I \ar[u]  \ar[r] & \cdots \ar[r] & \bigoplus\limits_{\substack{|I^{\prime}|=0 \\|I^{\prime\prime}|=r}}B_I \ar[u]
  \end{tikzcd}
\end{equation}
In the diagram, the horizontal differentials are induced by
\(D^{\prime}_{n+1},\ldots,D^{\prime}_{n+r}\),
and the vertical ones are induced by \(D^{\prime}_1,\ldots,D^{\prime}_{n}\),
where
\begin{equation}
D^{\prime}_{i} = x_{i}\frac{\partial}{\partial x_i} + \pi x_{i}\frac{\partial G}{\partial x_{i}}
=\exp(-\pi G) \circ x_{i}\frac{\partial}{\partial x_i}\circ \exp(\pi G), \quad i=1,2,\ldots,n+r.
\end{equation}

The following lemma shows that the \(0\)\textsuperscript{th},
\(1\)\textsuperscript{st}, \(\ldots\), and \((c-1)\)\textsuperscript{st} columns
of the \(E_1\)-page of the spectral sequence associated to the double
complex~\eqref{eq:double-complex} are all zero.

\begin{lemma}
  \label{lemma:c-exactness-of-rows}
  For each \(0\leq i\leq n\), the \(i\)\textsuperscript{th} row of
  \eqref{eq:double-complex} is exact in cohomology degree \(0,1,\ldots,c-1\).
\end{lemma}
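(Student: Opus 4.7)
My plan is to reduce to the case $i = 0$, extract a regular sequence from hypothesis \eqref{eq:hypothesis}, and then run a filtration argument whose associated graded is a multiplication Koszul complex on a regular sequence.

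First I would observe that the horizontal differentials $D_{n+j}$ only involve derivations in the variables $y_j := x_{n+j}$ and multiplication by power series in $(x, y)$, so they commute with multiplication by $x^{I'}$ for any $I' \subset \{1, \ldots, n\}$. Hence the $i$-th row of \eqref{eq:double-complex} splits as a direct sum, indexed by $I' \subset \{1, \ldots, n\}$ with $|I'| = i$, of copies of the universal Koszul complex
\[
K^\bullet \;:=\; \Bigl[\, B \to \bigoplus_{j=1}^{r} B_{\{n+j\}} \to \cdots \to B_{\{n+1,\ldots,n+r\}} \,\Bigr]
\]
with differentials induced by $D_{n+1}, \ldots, D_{n+r}$. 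It therefore suffices to prove $H^i(K^\bullet) = 0$ for $0 \le i < c$.

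The key algebraic input is \eqref{eq:hypothesis}: since $V(f_1, \ldots, f_c)$ has codimension exactly $c$ in $\mathbb{A}^n$, the Cohen--Macaulay property of $k[x_1, \ldots, x_n]$ forces $f_1, \ldots, f_c$ to be a regular sequence. This lifts to a regular sequence $\widehat{f}_1, \ldots, \widehat{f}_c$ on the flat overconvergent completion $A^\dagger$, and hence $y_1 \widehat{f}_1, \ldots, y_r \widehat{f}_r \in B$ has its first $c$ members forming a regular sequence on $B$. An explicit computation using \ref{situation:h} gives
\[
D_{n+j} \;=\; y_j \frac{\partial}{\partial y_j} \,+\, \gamma y_j \widehat{f}_j \,+\, \sum_{\ell \ge 1} \gamma_\ell \, p^\ell \, y_j^{p^\ell} \, \widehat{f}_j^{\sigma^\ell}(x^{p^\ell}),
\]
whose ``leading'' multiplicative contribution is the operator $\gamma y_j \widehat{f}_j$.

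The core of the argument would then be to equip $B$ with a filtration, combining $y$-degree with a $\pi$-adic weight, so that on the associated graded each $D_{n+j}$ reduces to multiplication by $\gamma y_j \widehat{f}_j$, with both the Euler term $y_j \partial/\partial y_j$ and the higher Artin--Hasse tail landing in strictly deeper pieces. The graded Koszul complex then becomes a multiplication Koszul on the sequence $\gamma y_j \widehat{f}_j$, whose first $c$ members are regular; by the standard partial regular-sequence theorem for Koszul complexes, it is exact in cohomological degrees $0, \ldots, c-1$. The spectral sequence of the filtration then transports this vanishing up to $K^\bullet$. The main obstacle is designing the filtration so that both the Euler perturbation and the higher Artin--Hasse tails are strictly subordinate to the multiplicative leading term, while remaining exhaustive and separated for the overconvergent topology on $B$ so the spectral sequence converges. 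Neither a naive $y$-degree filtration (which fails to kill the Euler term) nor a pure $\pi$-adic one (which fails to isolate the multiplicative part) suffices on its own; a carefully calibrated combination, matched to the overconvergence radii, is the heart of the technical work.
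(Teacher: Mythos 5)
Your reduction to the $i=0$ row is the same as the paper's, and you have correctly isolated the decisive algebraic input: hypothesis \eqref{eq:hypothesis} plus the Cohen--Macaulay property force $\widehat{f}_1,\ldots,\widehat{f}_c$ to be a regular sequence on $A^\dagger$. But the core of your proposed proof---a filtration on $B$ whose associated graded differential is multiplication by $\gamma y_j\widehat{f}_j$---has a genuine gap that I do not think can be filled, and it is worth spelling out why.

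For the $E_0$-differential to be the multiplication operator, you need $\gamma y_j\widehat{f}_j$ to preserve the filtration degree while the Euler term $y_j\partial/\partial y_j$ and the Artin--Hasse tail strictly increase it. Any filtration built from $y$-degree moves in the wrong direction: multiplication by $y_j\widehat{f}_j$ raises $y$-degree by one, whereas $y_j\partial/\partial y_j$ preserves it, so a $y$-degree filtration makes the \emph{Euler} term the leading symbol, not the multiplication. Adding a $\pi$-adic weight does not help either: multiplication by $\gamma y_j\widehat{f}_j$ raises the $\pi$-adic weight by one (because of $\gamma$), while the Euler term again preserves or raises it (depending on $\operatorname{ord}_p$ of the exponent), so the multiplication term still lands \emph{deeper} than the Euler term in any nonnegative combination. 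There is no degree- or weight-based filtration under which the Euler vector field is a subordinate perturbation of the multiplication operator; the two are genuinely of the same order. This is precisely the problem Dwork cohomology is designed to solve, and solving it requires actually inverting the differential operator rather than filtering it away---compare Lemma~\ref{lemma:d1-bij}, which establishes the surjectivity of $\widetilde{D}=\partial_T-\pi\widehat{f}$ by a delicate perturbation theorem of Robba--Christol, not by a degree filtration. Moreover, even if the filtration existed, spectral sequence convergence for nuclear operators on overconvergent spaces is itself a nontrivial issue, as you note.

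The paper's actual proof bypasses this entirely. The zeroth row is recognized (after the ``untwist-then-retwist'' change of presentation from Lemma~\ref{lemma:diff-presentation}, and then using the comparison Theorem~\ref{theorem:local-comparison}) as computing $R\Gamma^\dagger_{\{f_1=\cdots=f_c=0\}}(\mathcal{O})[c]$, tensored against a further Koszul factor via K\"unneth. The acyclicity then follows from Lemma~\ref{lemma:rigid-koszul}: local cohomology along a set-theoretic complete intersection is concentrated in one degree. That lemma does ultimately reduce to the exactness of a multiplication Koszul complex on a regular sequence---so your intuition about where the regular sequence enters is exactly right---but the reduction is achieved by the comparison theorem, a substantial analytic input built up in Sections~3--4, not by a filtration. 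In short, the missing ingredient in your plan is not a cleverer filtration but the comparison theorem itself, and you cannot avoid invoking it (or re-proving its content, i.e.\ the bijectivity Lemma~\ref{lemma:d1-bij}).
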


\begin{proof}
The complex \eqref{eq:double-complex} uses ``toric'' conventions, and
is more suitable for later chain level manipulations.  In the
following proof, we shall use an equivalent ``affine'' convention.
Write \(B = K\langle x_{1},\ldots,x_{n+r} \rangle^{\dagger}\), and let
\begin{equation*}
D_{j} = \frac{\partial}{\partial x_{j}} + \pi \frac{\partial G}{\partial x_{j}}, \quad j=1,\ldots,n+r.
\end{equation*}
Then \eqref{eq:double-complex} can be written as
\begin{equation}\tag{\ref*{eq:double-complex}\({}^{\prime}\)}
\label{eq:double-complex-prime}
\begin{tikzcd}
\vdots & \vdots & \vdots \\
B^{\oplus n} \ar{r} \ar{u} & B^{\oplus nr} \ar{u} \ar{r} & B^{\oplus n\binom{r}{2}} \ar{r} \ar{u} & \cdots \\
B \ar{r} \ar{u} & B^{\oplus r} \ar{r} \ar{u} & B^{\binom{r}{2}} \ar{r} \ar{u} & \cdots
\end{tikzcd}
\end{equation}
In this double complex, the horizontal arrows are induced by
\(D_{n+1},\ldots,D_{n+r}\), and the vertical arrows are induced by
\(D_{1},\ldots,D_{n}\).

Let us first show the zeroth row of \eqref{eq:double-complex-prime} is
exact in degrees \(0,1,\ldots,c-1\).  Since the
\(i\)\textsuperscript{th} row of \eqref{eq:double-complex-prime} is an
\(\binom{n}{i}\)-fold direct sum of the zeroth row, the desired
exactness in general will follow.

The zeroth row of \eqref{eq:double-complex-prime} is itself the total
complex of a double complex:
\begin{equation}\label{eq:smaller}
\begin{tikzcd}
\vdots & \vdots & \vdots \\
B^{\oplus \binom{r-c}{2}} \ar{r} \ar{u} & B^{\oplus c\binom{r-c}{2}} \ar{r} \ar{u} & B^{\oplus\binom{r-c}{2}\binom{c}{2}} \ar{r}\ar{u} &\cdots \\
B^{\oplus (r-c)} \ar{r} \ar{u} & B^{\oplus c(r-c)} \ar{r}\ar{u} & B^{\oplus(r-c)\binom{c}{2}} \ar{r} \ar{u} &\cdots & \\
B \ar{r} \ar{u} & B^{\oplus c} \ar{r} \ar{u} & B^{\oplus \binom{c}{2}} \ar{r} \ar{u} & \cdots
\end{tikzcd}.
\end{equation}
In the diagram, the horizontal arrows are induced by
\begin{equation*}
D_{n+1},\ldots, D_{n+c},
\end{equation*}
and the vertical arrows are induced by
\begin{equation*}
D_{n+c+1},\ldots, D_{n+r}.
\end{equation*}
Thus the \(i\)\textsuperscript{th} row is the \(\binom{r-c}{i}\)-fold
direct sum of the zeroth row.  If we proved the zeroth row of
\eqref{eq:smaller} is acyclic except in top cohomology degree, then
every row of \eqref{eq:smaller} will be exact except in top cohomology
degree.  Thus the total complex of \eqref{eq:smaller}, which is the
zeroth row of the original \eqref{eq:double-complex-prime}, will have
vanishing cohomology in degrees \(0,1,\ldots,c-1\).

\medskip%
To prove acyclicity, we make the following auxiliary construction.
Consider the projection
\[
\varpi^{\prime}\colon \mathbb{A}^{n+r} \to \mathbb{A}^{n+r-c}, \quad (x_{1}, \ldots, x_{n+r}) \mapsto (x_{1}, \ldots, x_{n}, x_{n+c+1}, \ldots, x_{n+r}).
\]
Define
\(\mathcal{P} = \widehat{\mathbb{P}}^{n+r-c}_{\mathcal{O}_{K}} \times \widehat{\mathbb{P}}^{c}_{\mathcal{O}_{K}}\)
and
\(\mathcal{P}^{\prime} = \widehat{\mathbb{P}}^{n+r-c}_{\mathcal{O}_{K}}\).
The special fiber of \(\mathcal{P}\) (resp. \(\mathcal{P}^{\prime}\))
contains \(\mathbb{A}^{n+r}\) (resp. \(\mathbb{A}^{n+r-c}\)) as a
Zariski open subset, with complement \(H\) (resp. \(H^{\prime}\)).
Let \(\mathcal{L}\) be the holonomic
\(\mathscr{D}^{\dagger}_{\mathcal{P}, \mathbb{Q}}\)-module associated
with the Dwork crystal \(g^{\prime\ast}\mathcal{L}_{\pi}\), where
\[
g^{\prime} = x_{n+1} f_{1} + \cdots + x_{n+c} f_{c}.
\]
By construction, \(\mathcal{L}\) has an overconvergent singularity
along \(H\), i.e., \(\mathcal{L} = \mathcal{L}({}^{\dagger}H)\), and
is naturally a
\(\mathscr{D}^{\dagger}_{\mathcal{P},\mathbb{Q}}({}^{\dagger}H)\)-module,
where
\(\mathscr{D}^{\dagger}_{\mathcal{P},\mathbb{Q}}({}^{\dagger}H)\) is
the sheaf of differential operators on \(\mathcal{P}\) of finite level
with overconvergent singularities along \(H\)
(see \cite[\S4.2.5]{berthelot:arithmetic-d-modules-1-differential-operators-of-finite-level} or
\cite[\S2.5]{baldassarri-berthelot:dwork-cohomology-for-singular-hypersurfaces}).

Thus, \(\mathcal{L}\) represents an object of
\(D^{b}_{\mathrm{hol}}(\mathbb{A}^{n+r}/K)\). By
Theorem~\ref{theorem:local-comparison}, we have
\begin{equation*}
\varpi^{\prime}_{+} \mathcal{L} \simeq \mathbb{R}\underline{\Gamma}_{Z^{\prime}}(\mathcal{O}_{\mathcal{P}^{\prime}, \mathbb{Q}}({}^{\dagger}H^{\prime}))[c],
\end{equation*}
where \(Z^{\prime}\) is the zero locus of \(f_{1}, \ldots, f_{c}\) in
the affine space \(\mathbb{A}^{n} \times \mathbb{A}^{r-c}\),
that is
\[
Z^{\prime} = \{x \in \mathbb{A}^{n} : f_{1}(x) = \cdots = f_{c}(x) = 0\} \times \mathbb{A}^{r-c}.
\]
Given Hypothesis \eqref{eq:hypothesis}, we have
\(\dim Z^{\prime} = n + r - 2c\), and it follows from
Lemma~\ref{lemma:local-cohomology} that
\(\mathbb{R}\underline{\Gamma}_{Z^{\prime}}(\mathcal{O}_{\mathcal{P}^{\prime}, \mathbb{Q}}({}^{\dagger}H^{\prime}))[c]\),
initially a complex of
\(\mathscr{D}^{\dagger}_{\mathcal{P}^{\prime},\mathbb{Q}}({}^{\dagger}H^{\prime})\)-modules,
is concentrated in degree \(0\) only.  Consequently, by
\cite[Theorem~2.3]{baldassarri-berthelot:dwork-cohomology-for-singular-hypersurfaces}
(which asserts that the category of
\(\mathscr{D}^{\dagger}_{\mathcal{P}^{\prime},\mathbb{Q}}({}^{\dagger}H)\)-modules
is equivalent to the category of modules over the overconvergent Weyl
algebra, the equivalence being induced by the functor
\(\mathrm{H}^{0}(\mathcal{P}^{\prime},-)\)), we find that
\[
\mathrm{H}^{0}(\mathcal{P}^{\prime}; \mathbb{R}\underline{\Gamma}_{Z^{\prime}}(\mathcal{O}_{\mathcal{P}^{\prime}, \mathbb{Q}}({}^{\dagger}H^{\prime}))[c])
\]
is a module over the overconvergent Weyl algebra
\(\mathrm{H}^{0}(\mathcal{P}^{\prime};\mathscr{D}^{\dagger}_{\mathcal{P}^{\prime},\mathbb{Q}}({}^{\dagger}H^{\prime}))\),
and is thus concentrated in degree \(0\) only.

\medskip%
Now, the zeroth row of \eqref{eq:smaller}, shifted to the left by \(r\), is
\[
{B} \to B^{\oplus c} \to B^{\oplus \binom{c}{2}} \to \cdots \to B^{\oplus \binom{c}{c-1}} \to \underset{\bullet}{B^{\oplus \binom{c}{c}}}
\]
(where the bullet indicates the degree zero item of the chain
complex).  We want to show it is acyclic except in degree \(0\).  Since this is
a relative de~Rham complex,  according to
\cite[p.~197, Remark]{baldassarri-berthelot:dwork-cohomology-for-singular-hypersurfaces},
it represents the complex
\(\mathrm{H}^{0}(\mathcal{P}^{\prime}; \varpi^{\prime}_{+}\mathcal{L})\),
which is isomorphic to
\[
\mathrm{H}^{0}(\mathcal{P}^{\prime}; \mathbb{R} \underline{\Gamma}_{Z^{\prime}} (\mathcal{O}_{\mathcal{P}^{\prime}, \mathbb{Q}} ({}^{\dagger} H^{\prime})) [c])
\]
by Theorem~\ref{theorem:local-comparison}.  As discussed in the
preceding paragraph, this latter complex is indeed concentrated in
degree \(0\).  This completes the proof.
\end{proof}

Since Lemma~\ref{lemma:c-exactness-of-rows} implies
the spectral sequence associated to the double complex
\eqref{eq:double-complex} satisfies
\begin{equation*}
  E_{1}^{i,j} = 0, \quad \forall i < c,
\end{equation*}
in Figure~\ref{figure:square-identification},
only the shaded part contributes to the final abutment of the
spectral sequence.

\begin{figure}[ht!]
  \centering
  \begin{tikzpicture}[scale=0.9]
    \shadedraw[color=gray!15!white,draw=black] (1.5,0) rectangle (4,4);
    \draw[ultra thick,red] (2,4) -- (4,2);
    \draw[thick,dashed] (1.5,4) -- (4,1.5);
    \draw[ultra thick,blue] (1.5,3) -- (4,0.5);
    \draw[->] (0,0) -- (4.5,0) node[anchor=north west] {\(|I^{\prime\prime}|\)};
    \draw[->] (0,0) -- (0,4.5) node[anchor=south east] {\(|I^{\prime}|\)};
    \draw (4,-0.5) node{\(r\)};
    \draw (5.5,2.1) node{{\footnotesize\color{red}Second case}};
    \draw (5.3,0.5) node{{\footnotesize\color{blue}First case}};

    \draw (4,4.5) node{\footnotesize\((r,n)\)};
    \draw (4,-0.5) node{\(r\)};
    \draw (1.5,-0.5) node{\(c\)};
    \shadedraw[ball color=red!15!red, draw=gray!50] (4,4) circle (0.081);
    \draw (2,4.5) node{\footnotesize\((r-j,n)\)};
    \shadedraw[ball color=red!15!red, draw=gray!50] (2,4) circle (0.081);
  \end{tikzpicture}
  \caption{The double complex}
  \label{figure:square-identification}
\end{figure}

For this reason,
\(\mathrm{H}^{n+r-j}_{\mathrm{rig}}(\mathbb{A}_{\mathbb{F}_q}^{n+r},\mathcal{L}_{\pi})\) is in fact a
subquotient of
\begin{equation}\label{eq:direct-sum}
  \bigoplus_{\substack{|I|=n+r-j \\ |I^{\prime\prime}| \geq c}} B_{I};
\end{equation}
and Lemma~\ref{lemma:factor} can be refined:

\begin{lemma}\label{lemma:refined}
In Situation~\ref{situation:final}, under Hypothesis~\ref{eq:hypothesis},
the Fredholm determinant \(\det(1-t\cdot F| \mathrm{H}^{n-r+j}_{\mathrm{rig},c}(Z))\)
is a factor of
\[
  \prod_{\substack{|I|=n+r-j \\ |I^{\prime\prime}| \geq c}}\det(1-t\cdot q^{j-r}\alpha_{a}| B_{I}).
\]
\end{lemma}

Hence every Frobenius eigenvalue of \(\mathrm{H}^{n-r+j}_{\mathrm{rig},c}(Z)\)
is a reciprocal root of \(\det(1-t\cdot q^{j-r}\alpha_{a} | B_{I})\)
for some \(I\) with \(|I|=n+r-j\), \(|I^{\prime\prime}| \geq c\).

\medskip
To proceed, there are two cases.

\subsubsection*{First case:} \(j\geq r-c\).  This case
corresponds to Theorem~\ref{theorem:beyond-middle-dimension}.  All the
relevant spaces \(B_{I}\) lie on the lower slant line of
Figure~\ref{figure:square-identification}.
Let \(\gamma\) be one reciprocal root of
\(\det(1-t\cdot q^{j-r}\alpha_{a} | B_{I})\).
By Lemma~\ref{lemma:fredholm-determinant-of-alpha-on-bi},
the \(q\)-order of any reciprocal root \(\gamma\)
of \(\det(1-t\cdot \alpha_{a}| B_I)\) is at least
\(\frac{1}{d_1}\left( n+r-j+ \sum_{i\in I^{\prime\prime}} (d_1-d_i-1) \right)\).
Since \(d_1\geq \cdots \geq d_r\), and \(|I^{\prime\prime}|\geq c\),
we have
\begin{align*}
  \sum_{i\in I^{\prime\prime}}(d-d_i-1) &\geq \sum_{i=1}^{c}(d-d_i-1)-\sum_{i=c+1}^{|I^{\prime\prime}|-c}d_{i}^{*} \\
  &\geq \sum_{i=1}^{c}(d-d_i-1)-\sum_{i=c+1}^{r}d_{i}^*,
\end{align*}
where, recall that
\begin{equation*}
  d_i^* =
  \begin{cases}
    d_i, & \text{ if }1\leq i \leq c; \\
    1, & \text{ if }i>c, \text{ and }d_i = d_1; \\
    0, & \text{ if }i> c, \text{ and }d_i < d_1.
  \end{cases}
\end{equation*}
Moreover, by \cite[Lemma~3.1]{wan:poles},
the \(q\)-order of every reciprocal root of
\(\det(1-t\cdot \alpha_{a}| B_I)\)
is at least \(|I^{\prime\prime}| \geq c\).

\begin{remark}
  It should be noted that the cited lemma was stated for a certain Banach space
  denoted by \(B^{J_1,J_2}\) in \cite{wan:poles}. In our context, its role is
  subsumed by the overconvergent space \(B_{I}\), where \(J_1=I^{\prime}\), \(J_2=I^{\prime\prime}\).
  The proof of the cited lemma only uses Dwork trace formula, which is
  applicable to \(B_{I}\) as well.

  In fact, the cited lemma actually says that the reciprocal roots of
  \(\det(1-t\cdot \alpha_{a} | B_I)\) are \emph{algebraic integers}, and are
  divisible by \(q^{|I^{\prime\prime}|}\) in the ring of algebraic integers.
\end{remark}

Hence, Lemma~\ref{lemma:refined} implies that the \(q\)-order of every Frobenius
eigenvalue of
\(\mathrm{H}^{n-r+j}_{\mathrm{rig},c}(Z)\) is at least
\begin{equation}
  \label{eq:first-case}
  j - (r - c) +
  \max\left\{ 0, \left\lceil \frac{n-j+(r-c)-\sum_{i=1}^{r}d^*_i}{d_1} \right\rceil \right\}.
\end{equation}

Making change of variable \(j -(r-c) \to j\), the above argument
implies that the \(q\)-order of Frobenius eigenvalues of
\(\mathrm{H}^{\dim Z+j}_{\mathrm{rig},c}(Z)\) are at least
\(\nu_{j}(n;d_1,\ldots,d_r)\).

\subsubsection*{Second case:} \(0\leq j < r-c\),
which corresponds to Theorem~\ref{theorem:before-middle-dimension}.
In this case, the spaces \(B_I\) appeared in \eqref{eq:direct-sum}
all lie on the upper slant
line of Figure~\ref{figure:square-identification}. Thus,
\(r\geq |I^{\prime\prime}| \geq r-j > c\).
It follows from Lemma~\ref{lemma:fredholm-determinant-of-alpha-on-bi} that
every reciprocal root \(\gamma\) of \(\det(1-t\cdot \alpha_{a}| B_I)\)
satisfies
\(\operatorname{ord}_q\gamma \geq \frac{1}{d_1}\left( n+r-j + \sum_{i\in I^{\prime\prime}} (d_1-d_i-1)\right)\).
Since \(|I^{\prime\prime}| \geq r-j\), arguing as in the first case, we have
\begin{align}
  &\frac{1}{d_1}\left( n+r-j + \sum_{i\in I^{\prime\prime}} (d_1-d_i-1)\right) \nonumber\\
  \geq &\frac{1}{d_1} \left( n+r-j + \sum_{i=1}^{r-j} (d_1-d_i-1) - \sum_{i=r-j+1}^{r}d_{i}^* \right)\nonumber\\
  =    &\frac{1}{d_1} \left( n - \sum_{i=1}^{r-j} d_i - \sum_{i=r-j+1}^{r}d_{i}^{*}\right) + r-j.
         \label{eq:second-case}
\end{align}
Again, by \cite[Lemma~3.1]{wan:poles}, we have
\(\operatorname{ord}_q\gamma \geq |I^{\prime\prime}|\geq r-j\). Hence,
by Lemma~\ref{lemma:refined},
the \(q\)-order of every Frobenius eigenvalue of
\(\mathrm{H}^{n-r+j}_{\mathrm{rig},c}(Z)\) is at least
\begin{equation*}
  \max\left\{ 0, \left\lceil \frac{n-\sum_{i=1}^{r-j}d_i-\sum_{i=r-j+1}^{r}d_i^*}{d_1} \right\rceil \right\} = \epsilon_{j}(n;d_1,\ldots,d_r).
\end{equation*}

\subsection*{Step 3. Bootstrap}
\label{bootstrap}
To finish the proof of Theorems~\ref{theorem:beyond-middle-dimension}
and \ref{theorem:before-middle-dimension}, it remains to explain why the
a~priori weaker \(q\)-order estimate given above implies the integrality as
well as divisibility in the ring of algebraic integers. All we need is this (see
Lemma~\ref{lemma:last} below):

\begin{lemma}\label{lemma:last-assertion}
  If \(\gamma\) is a reciprocal root of the
  Fredholm determinant \(\det(1 - t \alpha_{a} | B_I)\), then \(\gamma\) is
  an algebraic integer, and any Galois conjugate of it is still a reciprocal root of
  \(\det(1-t\alpha_{a}| B_I)\).
\end{lemma}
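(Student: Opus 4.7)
The integrality of $\gamma$ has already been established in the remark following Lemma~\ref{lemma:fredholm-determinant-of-alpha-on-bi}, as a direct consequence of \cite{wan:poles}*{Lemma~3.1}; the substantive content of Lemma~\ref{lemma:last-assertion} is the Galois-closure. The plan is to factor $\det(1-t\alpha\mid B_I)$ into pieces each of which is, via Dwork's trace formula and inclusion-exclusion, visibly related to zeta functions of $\mathbb{F}_q$-varieties---rational functions with coefficients in $\mathbb{Q}$, whose reciprocal roots and poles automatically form a union of $\operatorname{Gal}(\overline{\mathbb{Q}}/\mathbb{Q})$-orbits.

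For each $J \subseteq \{1,\ldots,n+r\}$, I would let $M^J \subset B$ denote the closed subspace topologically spanned by monomials $x^u$ with $\operatorname{supp}(u) = J$. Since only finitely many subsets $J$ occur, there are topological direct sum decompositions $B = \bigoplus_J M^J$ and $B_I = \bigoplus_{J \supseteq I} M^J$. The operator $\alpha$ preserves every $B_{J'}$, so it is block upper-triangular with respect to the partial order $\supseteq$; the standard trace-additivity for nuclear operators in a stable filtration then yields the factorization
\begin{equation*}
  \det(1-t\alpha \mid B_I) = \prod_{J \supseteq I}\det(1-t\alpha \mid M^J),
\end{equation*}
where $\alpha \mid M^J$ denotes the operator induced on the quotient $B_J / \bigoplus_{J' \supsetneq J} M^{J'} \cong M^J$.

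Inspecting the formula $\alpha_0 = \sigma^{-1}\circ\psi\circ G$ with $G = \prod_u \vartheta(A_u x^u)$, any factor $\vartheta(A_u x^u)$ with $\operatorname{supp}(u) \not\subseteq J$ contributes only to $M^{J'}$ for $J' \supsetneq J$ and so dies in the quotient. Consequently $\alpha\mid M^J$ is precisely the Dwork operator for the restricted polynomial $g_J := g\big|_{x_i = 0,\, i\notin J}$ acting on the toric monomials of $\mathbb{G}_{\mathrm{m}}^J$. Applying Theorem~\ref{theorem:dwrok-trace-formula} to $g_J$ on the full Banach space $\tilde B_J$ in the variables $(x_i)_{i\in J}$, and factoring $\det(1-t\alpha\mid\tilde B_J) = \prod_{K \subseteq J}\det(1-t\alpha\mid M^K)$ exactly as above, gives
\begin{equation*}
  \prod_{K\subseteq J}\det(1-t\alpha\mid M^K) = \delta^{-|J|}\bigl(L^{\ast}_J(t)^{(-1)^{|J|-1}}\bigr),
\end{equation*}
where $L^{\ast}_J(t)$ is the toric exponential-sum $L$-function of $g_J$ on $\mathbb{G}_{\mathrm{m}}^J$. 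Möbius inversion over the subset lattice then expresses each $\det(1-t\alpha\mid M^J)$ as a signed product of the terms $\delta^{-|K|}(L^{\ast}_K)^{\pm 1}$ with $K \subseteq J$.

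Because our $g = \sum_{i=1}^r x_{n+i}f_i$ is linear in the variables $x_{n+i}$, the restriction $g_J$ retains this linear-in-$T$ form, and orthogonality of characters combined with inclusion-exclusion---exactly as in the proof of Lemma~\ref{lemma:from-l-star-to-zeta}---presents $L^{\ast}_J(t)$ as an alternating product of zeta functions of $\mathbb{F}_q$-varieties. In particular $L^{\ast}_J(t) \in \mathbb{Q}(t)$, so its reciprocal roots and poles form a union of $\operatorname{Gal}(\overline{\mathbb{Q}}/\mathbb{Q})$-orbits of algebraic integers; since $q \in \mathbb{Q}$, applying $\delta^{-|K|}$ only multiplies these by powers of $q$, preserving Galois-stability, and the Möbius combination then assembles $\det(1-t\alpha\mid M^J)$ from such factors, so its reciprocal root set is again a union of Galois orbits. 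Taking the product over $J \supseteq I$ yields the same property for $\det(1-t\alpha\mid B_I)$. The step I expect to be the main obstacle is verifying the identification of the diagonal block $\alpha\mid M^J$ with the Dwork operator for $g_J$ on $\mathbb{G}_{\mathrm{m}}^J$; once this is in place, the remainder is bookkeeping with the Dwork trace formula and the inclusion-exclusion already deployed in Section~\ref{sec:proof-cancellation}.
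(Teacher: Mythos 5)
Your proof is correct, and it reaches the same conclusion by the same overall route as the paper: factor the Fredholm determinant into Dwork-trace-formula pieces, recognize each piece (after applying the $\delta^{-1}$ operator) as an alternating product of zeta functions via Lemma~\ref{lemma:from-l-star-to-zeta}, and read off integrality plus Galois-stability from the fact that zeta functions of $\mathbb{F}_q$-varieties lie in $\mathbb{Q}(t)$. The one genuine difference is in the intermediate decomposition. You split $B_I$ into the $\alpha$-graded pieces $M^J$ (monomials with support exactly $J$, $J\supseteq I$) and factor as $\det(1-t\alpha\mid B_I) = \prod_{J\supseteq I}\det(1-t\alpha\mid M^J)$ using the filtration by the ideals $B_J$; you then identify $\alpha\mid M^J$ with the Dwork operator of the restricted polynomial $g_J$ and apply the trace formula, together with M\"obius inversion over $K\subseteq J$, to express each factor in terms of toric $L$-functions. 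The paper instead works with the quotient spaces $B^J = B/\sum_{j\in J}x_j B$, $J\subseteq I$, and derives $\det(1-t\alpha\mid B_I) = \prod_{J\subseteq I}\det(1-t\alpha\mid B^J)^{(-1)^{|J|}}$ from inclusion--exclusion over the standard semisimplicial resolution of the normal-crossing divisor $D_I$; each $\det(1-t\alpha\mid B^J)$ is then directly a trace-formula object. The two decompositions are equivalent --- substituting $B^J = \bigoplus_{K\cap J=\emptyset}M^K$ into the paper's signed sum and applying $\sum_{J\subseteq I\setminus K}(-1)^{|J|} = [I\subseteq K]$ recovers yours --- so the difference is bookkeeping. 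Your route requires the extra verification (which you flagged, and which does go through: the factors $\vartheta(A_u x^u)$ with $\operatorname{supp}(u)\not\subseteq J$ contribute $1$ modulo higher support, and $\psi$ preserves support) that $\alpha\mid M^J$ equals the Dwork operator for $g_J$; the paper avoids this at the cost of the divisor inclusion--exclusion, for which the trace formula \eqref{eq:turnoff-variable-dwork-trace} applies directly to each $B^J$. Both are fine; your version has the minor advantage that the factorization of $\det(1-t\alpha\mid B_I)$ carries no alternating signs before the M\"obius step.
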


\medskip
That \(\gamma\) is an algebraic integer had been shown by the first author in
\cite[Lemma~3.1]{wan:poles}, but it will naturally come up again in the
argument below.

\medskip
Before proving Lemma~\ref{lemma:last-assertion}
let us take a tour through the Dwork theory of exponential sums.
For \(J \subset \{1,2,\ldots,n+r\}\),
let \(X_{(J)}\) be the linear subspace of \(\mathbb{A}^{n+r}\) defined by the vanishing of
the variables \((x_{j})_{j\in J}\), and \(X_{(J)}^*\) its standard embedded
torus. Let \(g_{J}\) be the restriction of
\(g\) to \(X_{(J)}\).  For the nontrivial additive character \(\Psi\)
(see~\eqref{eq:additive-character}),
consider the exponential sum
\begin{equation*}
  S_{(J),m}^* = \sum_{x\in X_{(J)}^*(\mathbb{F}_{q^m})} (\Psi\circ \mathrm{Tr}_{\mathbb{F}_{q^m}/\mathbb{F}_q})(g_J(x)).
\end{equation*}
Let \(B^J = B/\sum_{j\in J}x_{j}B\).
Applying Dwork trace formula (Theorem~\ref{theorem:dwrok-trace-formula})
to \(g_{J}\) gives
\begin{equation}\label{eq:turnoff-variable-dwork-trace}
  (q^m-1)^{n+r-|J|} \mathrm{Tr}(\alpha_{a}^{m}|{B^J}) = S_{(J),m}^*,
\end{equation}
where, as before, \(\alpha_{a}\) is the nuclear operator defined in
Section~\ref{sec:exponential-model}.

For \(I \subset \{1,2,\ldots,n+r\}\),
let \(B_{I} = (\prod_{i\in I}x_{i})\cdot B\).
Then \(B/B_I\) should be thought of as a dagger algebra lifting
the divisor \(D_{I}=\{\prod_{i\in I}x_{i}=0\}\). The divisor
\(D_I\), being of strict normal crossings, has a standard semisimplicial resolution
\begin{equation*}
  \begin{tikzcd}
    \cdots \ar[shift left=2]{r} \ar[shift right=2]{r} \ar{r}&  \bigsqcup\limits_{\substack{J \subset I \\ |J|=2}} X_{(J)}\ar[shift left]{r} \ar[shift right]{r}& \bigsqcup\limits_{\substack{J \subset I\\ |J|=1}} X_{(J)} \ar{r} & D_{I}.
  \end{tikzcd}
\end{equation*}
By inclusion-exclusion,
\(\mathrm{Tr}(\alpha_{a}^m|{B_I})=\sum_{J\subset I} (-1)^{|J|}\mathrm{Tr}(\alpha_{a}^m|{B^J})\).
Exponentiating, we get
\begin{equation}\label{eq:alternating-prod-fred}
  \det(1 - t\alpha_{a}|{B_I}) = \prod_{J\subset I} \det(1 - t\alpha_{a}|{B^J})^{(-1)^{|J|}}
\end{equation}

\begin{proof}[Proof of Lemma~\ref{lemma:last-assertion}]
  The proof of the assertion is based on a similar,
  but more precise, argument used in the visibility proof.

  For each \(J \subset \{1,2,\ldots,n+r\}\), by
  \eqref{eq:turnoff-variable-dwork-trace},
  we have
  \begin{equation*}
    \det(1-t\alpha_{a}|{B^J})^{\delta^{n+r-|J|}} = L^*_{J}(t)^{(-1)^{n+r-|J|-1}},
  \end{equation*}
  where
  \begin{equation*}
    L_{J}^*(t) = \exp\left\{ \sum_{m=1}^{\infty}S_{(J),m}^{*} \frac{t^m}{m}\right\}.
  \end{equation*}
  But by Lemma~\ref{lemma:from-l-star-to-zeta} (applying to the lower
  dimensional affine space \(X_{(J)}\)), \(L_{J}^{*}\) is an alternating product of
  zeta functions of \(\zeta_{Z^*\cap X_{(E^{\prime})}}(q^{|E^{\prime}|}t)\),
  where \(E^{\prime}\) is the intersection of a subset \(E\) of \(J\) with \(\{1,2,\ldots,r\}\).
  Thus by the second equation in Definition~\ref{definition:delta-operator},
  as well as
  \eqref{eq:alternating-prod-fred},
  \(\det(1-t\alpha_{a}|{B_I})\) is an infinite alternating product of
  ``shifted'' zeta functions
  \(\zeta_{Z^*\cap X_{(E^{\prime})}}(q^{M}t)\), where \(M \in \mathbb{N}\).

  Therefore, any reciprocal root of
  \(\det(1-t\alpha_{a}| B_I)\), say \(\gamma\), is a reciprocal
  zero or reciprocal pole of some zeta function
  \(\zeta_{Z^*\cap X_{(J)}}(q^{M}t)\), not being canceled in the infinite
  product, for some natural number \(M\).
  Therefore, \(\gamma\) is an algebraic integer.
  Since such a shifted zeta function is a ratio of integral polynomials of
  constant term one, the (reciprocal) minimal polynomial of \(\gamma\) must not be
  canceled either.
\end{proof}

By Lemma~\ref{lemma:last-assertion},
all the conjugates of \(\gamma\) are still reciprocal roots of
\(\det(1-t\alpha_{a}| B_I)\). Thus the \(q\)-orders of the conjugates are bounded by
\eqref{eq:first-case} or \eqref{eq:second-case} depending on \(|I|\).
The proof of Theorems~\ref{theorem:beyond-middle-dimension}
and \ref{theorem:before-middle-dimension} is then concluded
thanks to the following elementary lemma.

\begin{lemma}
  \label{lemma:last}
  Fix an algebraic closure \(\overline{\mathbb{Q}}_p\) of \(\mathbb{Q}_p\).
  Let \(\gamma \in \overline{\mathbb{Q}}_{p}\) be an algebraic integer. Suppose
  that for any automorphism \(\sigma\) of \(\overline{\mathbb{Q}}_{p}\),
  \(\operatorname{ord}_q(\sigma(\gamma))\geq m\). Then \(q^{m} \mid \gamma\)
  in the ring of algebraic integers.
\end{lemma}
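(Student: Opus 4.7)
The plan is to exploit the minimal polynomial of $\gamma$ over $\mathbb{Q}_p$ and then rescale. Let $P(t) = t^d + a_1 t^{d-1} + \cdots + a_d$ be the minimal polynomial of $\gamma$ over $\mathbb{Q}_p$; because $\gamma$ is integral, $P(t) \in \mathbb{Z}_p[t]$. Factor $P(t) = \prod_{i=1}^{d}(t-\gamma_i)$ in $\overline{\mathbb{Q}}_p[t]$, where the $\gamma_i$ are precisely the images of $\gamma$ under the elements of $\mathrm{Gal}(\overline{\mathbb{Q}}_p/\mathbb{Q}_p)$. By hypothesis, $\operatorname{ord}_q(\gamma_i) \geq m$ for each $i$.

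Next I would form the polynomial
\[
  Q(t) = q^{-dm} P(q^{m} t) = t^d + \frac{a_1}{q^{m}} t^{d-1} + \cdots + \frac{a_d}{q^{dm}},
\]
which is monic of degree $d$ and has $\gamma/q^m$ as a root. Since $a_j$ equals $\pm 1$ times the $j$-th elementary symmetric function of $\gamma_1,\ldots,\gamma_d$, the ultrametric inequality gives $\operatorname{ord}_q(a_j) \geq jm$, hence $a_j/q^{jm} \in \mathbb{Z}_p$ for every $j$. Thus $Q(t) \in \mathbb{Z}_p[t]$, so $\gamma/q^m$ is integral over $\mathbb{Z}_p$, i.e.\ an algebraic integer. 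Equivalently, $q^m$ divides $\gamma$ in the ring of algebraic integers, as claimed.

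There is no real obstacle: the only thing that requires a sentence of justification is that every root of $P$ arises as $\sigma(\gamma)$ for some $\sigma$ in the hypothesis, which follows from the transitivity of $\mathrm{Gal}(\overline{\mathbb{Q}}_p/\mathbb{Q}_p)$ on the roots of any irreducible polynomial over $\mathbb{Q}_p$.
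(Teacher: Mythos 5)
There is a genuine gap: you take the minimal polynomial of $\gamma$ over $\mathbb{Q}_p$ rather than over $\mathbb{Q}$, and this loses exactly the information needed to reach the conclusion. What your computation proves is that $\gamma/q^m$ is integral over $\mathbb{Z}_p$. That is \emph{not} the same as being an algebraic integer: it only says that $\gamma/q^m$ lies in the valuation ring of $\overline{\mathbb{Q}}_p$ under the fixed embedding, i.e.\ that $\gamma/q^m$ is integral at the \emph{one} prime of $\mathbb{Q}(\gamma)$ above $p$ determined by that embedding. Nothing in your argument controls the other primes of $\mathbb{Q}(\gamma)$ above $p$. Concretely, take $p=q=5$, $m=1$, $\gamma=2+i$, embedded into $\overline{\mathbb{Q}}_5$ so that $\operatorname{ord}_5(2+i)=1$. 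Then $\gamma$ has minimal polynomial $t-(2+i)$ over $\mathbb{Q}_5$, your $Q(t)=t-(2+i)/5$ lies in $\mathbb{Z}_5[t]$, and $\gamma/5$ is integral over $\mathbb{Z}_5$ --- yet $(2+i)/5$ has norm $1/5$ and is not an algebraic integer. Of course this $\gamma$ does not satisfy the \emph{full} hypothesis of the lemma (there is an abstract automorphism of $\overline{\mathbb{Q}}_5$ sending $2+i$ to $2-i$, whose $q$-order is $0$), which is exactly the point: the hypothesis ranges over \emph{all} field automorphisms of $\overline{\mathbb{Q}}_p$, not only the $\mathbb{Q}_p$-linear ones in $\mathrm{Gal}(\overline{\mathbb{Q}}_p/\mathbb{Q}_p)$, and you have discarded that extra strength.

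The paper's proof repairs exactly this: it works with the minimal polynomial $T^e - a_1T^{e-1}+\cdots+(-1)^ea_e$ of $\gamma$ over $\mathbb{Q}$, whose coefficients $a_i$ lie in $\mathbb{Z}$. Its roots are all the $\mathbb{Q}$-conjugates of $\gamma$, each of which is $\sigma(\gamma)$ for some abstract automorphism $\sigma$ of $\overline{\mathbb{Q}}_p$ (any automorphism of $\overline{\mathbb{Q}}$ extends to one of $\overline{\mathbb{Q}}_p$). The hypothesis then gives $\operatorname{ord}_q(a_i)\geq im$; since $a_i\in\mathbb{Z}$ and $q$ is a power of $p$, this forces $a_i=q^{im}b_i$ with $b_i\in\mathbb{Z}$. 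The rescaled polynomial is monic with \emph{rational integer} coefficients, so $\gamma/q^m$ is an algebraic integer --- simultaneously handling all primes of $\mathbb{Q}(\gamma)$, not just the one you see through your chosen embedding. Your ultrametric estimate for the coefficients is the right idea; you just need to apply it to the $\mathbb{Q}$-minimal polynomial and invoke the hypothesis for all automorphisms.
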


\begin{proof}
  Let \(P(T) = T^{e} - a_{1}T^{e-1} + \cdots + (-1)^{e}a_{e}\) be the minimal
  polynomial of \(\gamma\). Then for every \(i\), \(a_{i} \in \mathbb{Z}\).
  Since \(a_i\) is an elementary symmetric polynomial of \(\sigma(\gamma)\),
  the hypothesis implies that \(\operatorname{ord}_q(a_i) \geq m\).
  Hence we can write \(a_i = q^{im} \cdot b_i\)
  for some \(b_i \in \mathbb{Z}\). The numbers \(\sigma(\gamma)\cdot q^{-m}\)
  all satisfy the  polynomial equation \(T^e-b_1T^{e-1} + \cdots + (-1)^{e}b_{e} = 0\),
  thus are all algebraic integers. The lemma is proved.
\end{proof}

\begin{proof}[Proof of Theorem~\ref{theorem:projective-bound}]
Using the long exact sequence of the pair \((\mathbb{P}^{n},Z)\),
the assertions about \(\mathbb{P}^n\setminus Z\) follow from those about \(Z\).

The previous
works~\cite{esnault-katz:cohomological-divisibility,esnault-wan:divisibility}
on \(\ell\)-adic cohomology treated affine and projective cases separately
(although using similar strategies). But in fact the projective
case is a formal consequence of the affine case.
Let \(\widehat{Z}\) be the affine cone of \(Z\). Let \(L \to \mathbb{P}^n_{\mathbb{F}_q}\) be the
geometric line bundle associated to \(\mathcal{O}_{\mathbb{P}^{n}_{\mathbb{F}_q}}(-1)\).
Then \(\widehat{Z}\setminus\{0\}\) naturally embeds into
\(L|_Z \overset{\text{def}}{=\!=} L \times_{\mathbb{P}^n_{\mathbb{F}_q}} Z\) as the
complement of the zero section. Identifying \(Z\) with the zero section of
\(L|_Z\), the relative cohomology sequence associated
to the pair \((L|_Z,\widehat{Z}\setminus\{0\})\) reads:
\begin{equation*}
  \cdots
  \to \mathrm{H}^{i}_{\mathrm{rig},c}(L|_Z) \xrightarrow{u} \mathrm{H}^{i}_{\mathrm{rig}}(Z) \to
  \mathrm{H}^{i+1}_{\mathrm{rig},c}(\widehat{Z}\setminus \{0\})
  \to \mathrm{H}^{i+1}_{\mathrm{rig},c}(L|_Z) \to \cdots.
\end{equation*}
Note that the restriction map \(u\) can be factored as
\begin{equation*}
\begin{tikzcd}
\mathrm{H}^{i}_{\mathrm{rig},c}(L|_Z) \ar[r,"u"] \ar[d] & \mathrm{H}^{i}_{\mathrm{rig}}(Z) \\
\mathrm{H}^{i}_{\mathrm{rig},c}(L) \ar[r] & \mathrm{H}^{i}_{\mathrm{rig}}(\mathbb{P}^n_{\mathbb{F}_q}) \ar[u]
\end{tikzcd}.
\end{equation*}
In the diagram, horizontal arrows are restriction maps to the zero section.
It follows that the image of \(u\) falls into the ``non-primitive part'' of the
cohomology of \(Z\). Hence the above exact sequence gives rise to a Frobenius
equivariant embedding
\(\mathrm{H}^{i}_{\mathrm{rig}}(Z)_{\mathrm{prim}} \hookrightarrow\mathrm{H}^{i+1}_{\mathrm{rig},c}(\widehat{Z} \setminus\{0\})\).
If \(i>0\), applying the relative cohomology sequence for compactly supported
cohomology, we know the map
\(\mathrm{H}^{i}_{\mathrm{rig,c}}(\widehat{Z}\setminus\{0\}) \to \mathrm{H}^{i}_{\mathrm{rig},c}(\widehat{Z})\)
is bijective. Thus for \(n-r+j > 0\), the Frobenius eigenvalues of
\(\mathrm{H}_{\mathrm{rig}}^{n-r+j}(Z)_{\mathrm{prim}}\) are also
Frobenius eigenvalues of \(\mathrm{H}^{n+1-r+j}_{\mathrm{rig,c}}(\widehat{Z})\).
We then conclude by applying Theorems~\ref{theorem:beyond-middle-dimension} and
\ref{theorem:before-middle-dimension} to \(\widehat{Z}\).
\end{proof}

\bibliographystyle{plain}
\bibliography{vi-divi}%

\end{document}